\date{}
\newtheorem{theorem}{Theorem}
\newtheorem{lemma}[theorem]{Lemma}
\newtheorem{prop}[theorem]{Proposition}
\newtheorem{remark}[theorem]{Remark}
\theoremstyle{definition} 
\def\ignore#1{}
\def\be{\begin{equation}}
\def\ee{\end{equation}}
\def\ba{\begin{align}}
\def\ea{\end{align}}
\def\bas{\begin{align*}}
\def\eas{\end{align*}}
\newcommand{\cPF}{C_\mathrm{PF}}
\newcommand{\cPFh}{C_\mathrm{PF,d}}
\newcommand{\cis}{c_\mathrm{is}}
\newcommand{\Ipol}{\mathcal{I}}
\DeclareMathOperator{\opF}{\mathcal{F}}
\DeclareMathOperator{\opFpthree}{\mathcal{F}^{4}_b}
\DeclareMathOperator{\opFpthreea}{\mathcal{\widetilde F}^{4}_b}
\newcommand{\cF}{C_\mathrm{F}}
\newcommand{\Xpthreeloc}{\mathbb{X}^{4}_b}
\newcommand{\Xpthree}{\mathbb{X}^{4,c}_b}
\newcommand{\Xptwo}{\mathbb{X}^{3,\cN}}
\newcommand{\XdDiv}{\mathbb{X}^{\mathrm{dDiv}}}
\newcommand{\XdDivnn}{\mathbb{X}^\mathrm{dDiv}_\mathit{nn}}
\newcommand{\XdDivnnc}{\mathbb{X}^{\mathrm{dDiv},\mathit{nnc}}}
\newcommand{\HCT}{\mathit{HCT}}
\newcommand{\RT}{\mathit{RT}}
\newcommand{\R}{\mathbb{R}}
\newcommand{\Rt}{\mathbb{R}^2}
\newcommand{\N}{\mathbb{N}}
\renewcommand{\SS}{\mathbb{S}}
\newcommand{\cB}{\mathcal{B}}
\newcommand{\cE}{\mathcal{E}}
\newcommand{\cC}{\mathcal{C}}
\newcommand{\cCinv}{{\mathcal{C}^{-1}}}
\newcommand{\cU}{\mathcal{U}}
\newcommand{\cV}{\mathcal{V}}
\newcommand{\cN}{\mathcal{N}}
\newcommand{\cS}{\mathcal{S}}
\newcommand{\sH}{\widetilde H}
\newcommand{\bx}{{\boldsymbol{x}}}
\newcommand{\bn}{{\boldsymbol{n}}}
\newcommand{\bt}{{\boldsymbol{t}}}
\newcommand{\bu}{{\boldsymbol{u}}}
\newcommand{\bv}{{\boldsymbol{v}}}
\newcommand{\bw}{{\boldsymbol{w}}}
\newcommand{\bQ}{{\boldsymbol{Q}}}
\newcommand{\bM}{{\boldsymbol{M}}}
\newcommand{\bI}{{\boldsymbol{I}}}
\newcommand{\bpsi}{{\boldsymbol{\psi}}}
\newcommand{\bphi}{{\boldsymbol{\phi}}}
\newcommand{\boldeta}{{\boldsymbol{\eta}}}
\newcommand{\dbM}{{\boldsymbol{\delta\!M}}}
\newcommand{\du}{{\delta\!u}}
\newcommand{\dbu}{\boldsymbol{\delta\!u}}
\newcommand{\deta}{{\delta\!\eta}}
\newcommand{\dphi}{{\delta\!\phi}}
\newcommand{\dpsi}{{\delta\!\psi}}
\newcommand{\dbpsi}{{\delta\!\bpsi}}
\newcommand{\dbeta}{\boldsymbol{\delta\!\eta}}
\newcommand{\dv}{{\delta\!v}}
\newcommand{\dbv}{\boldsymbol{\delta\!v}}
\newcommand{\dw}{{\delta\!w}}
\newcommand{\<}{\langle{}}
\renewcommand{\>}{\rangle}
\newcommand{\ip}[2]{\llangle#1\hspace*{.5mm},#2\rrangle}
\newcommand{\dual}[2]{\<#1\hspace*{.5mm},#2\>}
\newcommand{\vdual}[2]{(#1\hspace*{.5mm},#2)}
\DeclareMathOperator{\sym}{sym}
\DeclareMathOperator{\dom}{dom}
\DeclareMathOperator{\trA}{\gamma_{A,\Gamma}}
\DeclareMathOperator{\trAS}{\gamma_{A,\cS}}
\DeclareMathOperator{\trASinv}{\gamma_{A,\cS}^{-1}}
\DeclareMathOperator{\trAsS}{\gamma_{A^*\!,\cS}}
\DeclareMathOperator{\trAsSinv}{\gamma_{A^*,\cS}^{-1}}
\newcommand{\grad}{\nabla}
\newcommand{\strain}{\varepsilon}
\renewcommand{\div}{\operatorname{div}}
\newcommand{\Div}{\operatorname{\mathbf{div}}}
\newcommand{\dDiv}{\operatorname{d\mathbf{div}}}
\newcommand{\Hnn}{H_{\mathit{nn}}}
\newcommand{\Idd}{\Pi^\mathrm{dDiv}}
\newcommand{\Iddnnc}{\Pi^{\mathrm{dDiv},\mathit{nnc}}}
\newcommand{\mesh}{\mathcal{T}}
\newcommand{\el}{T}
\newcommand{\trtwo}{{\gamma_{2}}}
\newcommand{\trtwoloc}{{\gamma_{2,\partial\el}}}
\newcommand{\trtone}{{\gamma_{2,1}}}
\newcommand{\trdDiv}{{\gamma_{\mathrm{dDiv}}}}
\newcommand{\trdDivM}{{\gamma_{\mathrm{dDiv},J0}}}
\newcommand{\HtrdDivM}{H^{-3/2,-1/2}_{J0}}
\newcommand{\trdnn}{{\gamma_{\mathrm{dDiv},\mathit{nn}}}}
\title{Generalized mixed and primal hybrid methods
with applications to plate bending
\thanks{Supported by ANID-Chile through FONDECYT project 1230013}
\author{
Norbert Heuer\thanks{
Facultad de Matem\'aticas, Pontificia Universidad Cat\'olica de Chile,
Avenida Vicu\~na Mackenna 4860, Santiago, Chile,
email: {\tt nheuer@uc.cl}}}}
\begin{document}
\maketitle
\begin{abstract}
We present an extended framework for hybrid finite element approximations
of self-adjoint, positive definite operators.
It covers the cases of primal, mixed, and ultraweak formulations, both at the
continuous and discrete levels, and gives rise to conforming discretizations.
Our framework allows for flexible continuity restrictions across elements,
and includes the extreme cases of conforming and discontinuous hybrid methods.
We illustrate an application of the framework to the Kirchhoff--Love
plate pending model and present three primal hybrid and two
mixed hybrid methods, four of them with numerical examples. In particular,
we present conforming frameworks for (in classical meaning) non-conforming elements of
Morley, Zienkiewicz triangular, and Hellan--Herrmann--Johnson types.

\medskip
\noindent
{\em AMS Subject Classification}:
65N30, 
35J35, 
74G15, 
74S05  
74K20, 

\medskip
\noindent
{\em Key words}: primal hybrid method, mixed hybrid method, Kirchhoff--Love model,
                 biharmonic operator, Morley element, Zienkiewicz triangle element,
                 Hellan--Herrmann--Johnson method
\end{abstract}

\section{Introduction}

We present a framework for generalized mixed and primal hybrid methods with flexible
continuity requirements of dual and primal variables.
It renders conforming otherwise non-conforming discretizations.
Our analysis is an extension of the classical Babu\v{s}ka--Brezzi framework
to a general hybrid approach where jump and trace (Lagrange multiplier)
terms are construed in the spirit of the discontinuous Petrov--Galerkin (DPG) method
with optimal test functions \cite{DemkowiczG_10_CDP,DemkowiczG_11_CDP}.
For special cases, the seminal papers on hybrid methods by
Brezzi, Marini, Raviart, and Thomas
\cite{Brezzi_75_MEF,BrezziM_75_NSP,Thomas_75_MEF,Thomas_76_MEF,RaviartT_77_PHF}
insinuate such embodiment of non-conformity. But only through the DPG setting
of trace and jump operations and their spaces, developed in recent years,
do we have a general abstract framework that settles well-posedness
at the continuous and discrete levels. This framework provides a conforming setting
for discrete schemes that are non-conforming in standard spaces.
We expect a conforming framework to be beneficial for a posteriori error
analysis, e.g., for normal-normal continuous stress approximations
in linear elasticity as proposed in \cite{PechsteinS_11_TDN,PechsteinS_18_ATM,CarstensenH_NNC}.
We prove well-posedness of the continuous and discrete
formulations, and quasi-optimality of the schemes.
In this paper, we restrict ourselves to general positive-definite
self-adjoint differential operators without lower-order terms, and
consider homogeneous Dirichlet boundary conditions.
We illustrate an application of the framework to the  Kirchhoff--Love plate bending model
which, in the simplest case, reduces to the biharmonic problem.
Rather than efficiency, the proposed discrete schemes are meant to
underline the generality and practicality of the framework for a non-trivial application.
In particular, our various trace interpretations show that individual components of
effective shear forces become accessible for stable approximations
by more canonical --generalized hybrid-- formulations.
This is relevant for engineering applications.

Different strategies are known to reduce the required regularity of discrete spaces
for mixed finite element methods both of the standard (dual) and primal types.
Such a regularity reduction is particularly attractive for spaces whose conformity
not only requires continuity of different orders but especially for those aiming at
pointwise symmetry. Examples are stress tensors of linear elasticity and bending moments
in plate bending, but also (though without symmetry) deflections in plate bending
of Kirchhoff--Love type. Strategies include discontinuous Galerkin (DG), non-conforming,
and hybrid methods, see, e.g.,
\cite{Ciarlet,BrezziF_91_MHF,ArnoldBCM_02_UAD,Riviere_08_DGM,DiPietroE_12_MAD}.
This has been and continues to be a very active field of research, and an up-to-date
literature discussion goes beyond the scope of this paper.
DG and non-conforming methods are usually defined and analyzed at a purely discrete level
in finite-dimensional spaces.
Hybrid methods can also be interpreted as being of the discontinuous Galerkin type,
but their setting is usually closer to the underlying variational formulation of the problem.
Primal hybrid methods ignore the conformity of approximations of primal variables across
element interfaces, and compensate this by the introduction of Lagrange multipliers.
Mixed hybrid settings do this for the dual variable.
The idea of using non-conforming hybrid discretizations goes back to the
solid mechanics community, see Fraeijs de Veubeke \cite{FraeijsdV_65_DEM} and
Pian \& Tong, Pian \cite{PianT_69_BFE,Pian_72_FEF}.
Thomas and Raviart \& Thomas provided analyses of the dual and primal hybrid schemes
\cite{Thomas_75_MEF,Thomas_76_MEF,RaviartT_77_PHF}.
See also the early book by Brezzi and Fortin \cite{BrezziF_91_MHF},
predecessor of \cite{BoffiBF_13_MFE}.
Brezzi provided an abstract framework for the appearing saddle point problems and discretizations
in his seminal paper \cite{Brezzi_74_OEU} and,
also together with Marini \cite{Brezzi_75_MEF,BrezziM_75_NSP},
analyzed such schemes for the biharmonic problem.
Of course, there is related analysis for saddle point discretizations by
Babu\v{s}ka \cite{Babuska_73_FEM}, and Ladyzhenskaya \cite{Ladyzhenskaya_69_MTV}
is usually cited for the continuous analysis. Brezzi specifically motivated
his formulation and conditions for the analysis of hybrid finite element schemes.

There is no clear separation between different types of methods with approximations
of reduced regularity, cf. the discussions in \cite{ArnoldB_85_MNF,BlumR_90_MFE},
see also \cite[Remark~10.3.4]{BoffiBF_13_MFE}.
In this paper, we consider a finite element discretization that is non-conforming
in standard spaces to be of hybrid type if there is a variational formulation
that renders the discretization conforming.
We propose and analyze a framework both for primal and mixed hybrid methods
with general continuity (conformity) requirements so as to have a functional setting
that covers the extreme cases of primal/mixed Galerkin methods and primal/mixed hybrid methods,
and includes intermediate cases. We also consider the case of ultraweak formulations.
As mentioned before, our trace and jump analysis is in the spirit of the
discontinuous Petrov--Galerkin (DPG) method with optimal test functions, proposed
by Demkowicz and Gopalakrishnan as a scheme that aims at ``automatic'' discrete inf-sup
stability \cite{DemkowiczG_10_CDP,DemkowiczG_11_CDP}. The DPG method is typically
based on ultraweak formulations. Their use has been proposed by Despr\'es and Cessenat
\cite{Despres_94_SUF,CessenatD_98_AUW}. Switching to an ultraweak formulation
means that all appearing derivatives are thrown onto the test side via integrations
by parts, in this way generating trace terms.
Bottasso \emph{et. el.} \cite{BottassoMS_02_DPG} did this in a DG setting
and suggested to replace trace terms with independent variables,
as proposed in the unified DG setting of \cite{ArnoldBCM_02_UAD}.
As the DPG setting is essentially a functional analytic framework, the use of ultraweak
formulations requires a definition of trace variables in continuous spaces.
This has led to a specific analysis of such formulations
\cite{CarstensenDG_16_BSF,FuehrerHS_20_UFR}, see also
\cite[Appendix A]{DemkowiczGNS_17_SDM} for a first abstract setting.
In this paper, we establish that the treatment of traces in the ultraweak DPG setting gives rise to
a generalized framework for hybrid methods that usually work with quotient spaces
and ``harmonic'' extensions. We again refer to Brezzi \cite{Brezzi_74_OEU} for abstract
saddle point formulations, there motivated by the analysis of hybrid schemes,
and to the more extensive treatise \cite{BrezziF_91_MHF} for specific problems.
We systematically work with trace spaces and norms as in the DPG spirit,
but of a more flexible scope, and provide abstract formulations for hybrid settings.
The well-posedness of our variational formulations and their discretizations
follows from the Brezzi theory by verifying the now standard criteria.

An overview of the remainder is as follows.
In Section~\ref{sec_abstract} we present the abstract framework of our variational formulations
and their discretizations.
We do this for general self-adjoint positive definite operators but,
for ease of presentation, restrict ourselves to homogeneous Dirichlet conditions and discard
lower-order terms. The primal variable is assumed to be scalar, though an extension of
our framework to vector cases is straightforward.
In Subsections~\ref{sec_abstract_gp} and~\ref{sec_abstract_gm}
we prove the well-posedness of generalized primal hybrid and mixed hybrid formulations,
respectively, and the quasi-optimality of their discretizations.
For its relevance for the DPG method, we also study a general ultraweak formulation
in Subsection~\ref{sec_abstract_gu}. A direct discretization would be a Petrov--Galerkin
scheme, as used for the DPG method with optimal test functions.
We briefly illustrate the path of a minimum residual discretization which is
analogous to a) a fully discrete DPG scheme as proposed in \cite{GopalakrishnanQ_14_APD}
and b) a variational stabilization proposed in \cite{CohenDW_12_AVS}. Proofs of all the
abstract results are given at the end, in Section~\ref{sec_abstract_proofs}.
In Section~\ref{sec_KL} we illustrate an application of the abstract results
to the Kirchhoff--Love plate bending problem, and present five different
formulations with discretizations. We include the primal and mixed hybrid versions
(in \S\ref{sec_KL_p1} and \S\ref{sec_KL_d1}, respectively)
which are new formulations, to the best of our knowledge,
but are equivalent to the ones proposed in \cite{Brezzi_74_OEU,BrezziM_75_NSP},
though with different discretizations.
In the remaining Sections~\ref{sec_KL_p2},~\ref{sec_KL_p3}, and~\ref{sec_KL_d2} we depart
from the standard hybrid settings and consider formulations that do impose
continuity restrictions between elements of principal variables. To our knowledge,
these formulations and discretizations are new. In particular, we do not
assume convexity of the domain and appearing traces of bending moments include corner
forces in a well-posed manner.
A primal hybrid scheme with continuity at vertices is the subject of \S\ref{sec_KL_p2}.
It can be interpreted as a scheme with Morley-type element and Lagrangian multipliers,
cf.~\cite{Morley_68_TEE}. A primal hybrid scheme with continuous approximation is studied
in \S\ref{sec_KL_p3}. It can be seen as a conforming extension of
$C^0$-interior penalty (C0IP) schemes, see, e.g., \cite{EngelGHLMT_02_CDF,BrennerS_05_CIP},
or the Zienkiewicz triangular element, cf.~\cite{BazeleyCIZ_65_TEP,LascauxL_75_SNF},
see also the hybrid high-order (HHO) method in \cite{DongE_24_CHH} with continuous
approximation and the C0-hybrid formulation in \cite{ChenH_NDD}, the latter
also presenting a mixed hybrid discretization.
In \S\ref{sec_KL_d2} we study a mixed hybrid formulation and discretization that imposes
normal-normal continuity of bending moments, and thus is a conforming framework
for elements of the Hellan--Herrmann--Johnson (HHJ) type,
cf.~\cite{Hellan_67_AEP,Herrmann_67_FEB,Johnson_73_CMF}.
Our abstract framework covers the cases of classical
primal and mixed formulations, not explictly studied here.
In the case of the Kirchhoff--Love model, composite
Hsieh--Clough--Tocher (HCT) elements can be used for the conforming approximation of
primal variables, see~\cite{CloughT_65_FES,Ciarlet_78_IEE,DouglasDPS_79_FCF},
whereas conforming elements for bending moments and the resulting mixed method
have been recently presented in \cite{FuehrerH_MKL}. We do use traces of HCT elements
to approximate traces of $H^2$-variables, and also use a reduced element from
\cite{FuehrerH_MKL} to approximate bending moments and their traces.
All our discretizations are of low order and aim at a low number of degrees of freedom,
though we do not claim optimality. For all but the primal hybrid method
(which is very close to the nodal-continuous primal hybrid method) we present numerical
results that illustrate the convergence properties of the schemes for a smooth
model solution, see Section~\ref{sec_KL_num}.
In this paper, we generally prove quasi-optimal error estimates in energy norms, and leave
the proof of specific convergence orders open. We also do not elaborate on 
superconvergence results which can be observed in some cases.
We restrict our analysis to homogeneous Dirichlet boundary conditions.
Though, we stress the fact that the analysis can be extended to include any
combination of different types of boundary conditions that make physical sense.
This is due to the fact that all the variables and spaces stem from well-posed
variational formulations: no non-physical terms or Lagrangian multipliers are used.
For an illustration of the inclusion of boundary conditions we refer to
\cite{CarstensenH_NNC} which deals with the particular case of plane elasticity.
At an abstract level, our framework includes the analysis provided there.

\section{Abstract framework} \label{sec_abstract}

For a bounded Lipschitz domain $\Omega\subset\R^d$ ($d\in\N$)
and $U\in\{\R,\R^d,\R^{d\times d}\}$ or $U=\SS:=\{w\in \R^{d\times d};\; w=w^\top\}$
we consider a linear unbounded $U$-valued differential operator
$A:\;\dom(A)\subset L_2(\Omega)\to L_2(\Omega;U)$ with constant coefficients and
unbounded formal adjoint $A^*:\;\dom(A^*)\subset L_2(\Omega;U)\to L_2(\Omega)$,
and the Hilbert spaces with (squared) norms
\begin{align*}
   H(A)&:=\{v\in L_2(\Omega);\; Av\in L_2(\Omega;U)\},
   &&\|v\|_A^2 := \|v\|^2 + \|Av\|^2,\\
   H(A^*)&:=\{w\in L_2(\Omega;U);\; A^*w\in L_2(\Omega)\},
   &&\|w\|_{A^*}^2 := \|w\|^2 + \|A^*w\|^2.
\end{align*}
Here, $\|\cdot\|$ is the generic $L_2(\Omega)$-norm and, below, $\vdual{\cdot}{\cdot}$
denotes the generic $L_2(\Omega)$ duality.
There is a corresponding trace operator
\begin{align*}
   \trA:\; \left\{\begin{array}{cll}
           H(A) &\rightarrow& H(A^*)^*,\\
           v &\mapsto&
               \dual{\trA(v)}{w}_\Gamma :=
               \vdual{Av}{w} - \vdual{v}{A^*w}
           \end{array}\right.
\end{align*}
and space with vanishing traces
\[
   H_0(A):=\{v\in H(A);\; \trA(v)=0\}.
\]
Given a symmetric, positive definite tensor-field
$\cC\in L_\infty(\Omega;U\times U)$ and $f\in L_2(\Omega)$,
our model problem with homogeneous Dirichlet boundary condition reads
\begin{align} \label{prob}
   u\in H_0(A):\quad A^*\cC Au = f.
\end{align}
There are three canonical variational formulations, the Euler--Lagrange equation
\begin{align} \label{EL}
   u\in H_0(A):\quad \vdual{\cC Au}{A\du} = \vdual{f}{\du}\quad\forall\du\in H_0(A)
\end{align}
and, with independent variable $w:=\cC Au$,
the primal mixed formulation
\begin{subequations} \label{p}
\begin{alignat}{4}
   w\in L_2(\Omega;U),\ u\in H_0(A):\qquad
   &\vdual{\cCinv w}{\dw} - \vdual{A u}{\dw}
   &&= 0 &&\forall\dw\in L_2(\Omega;U), \\
   &-\vdual{w}{A\du} &&= -\vdual{f}{\du}\quad &&\forall\du\in H_0(A)
\end{alignat}
\end{subequations}
and the (dual) mixed formulation
\begin{subequations} \label{d}
\begin{alignat}{4}
   w\in H(A^*),\ u\in L_2(\Omega):\qquad
   &\vdual{\cCinv w}{\dw} - \vdual{u}{A^*\dw}
   &&= 0 &&\forall\dw\in H(A^*), \\
   & -\vdual{A^*w}{\du} &&= -\vdual{f}{\du}\quad &&\forall\du\in L_2(\Omega).
\end{alignat}
\end{subequations}
Under the standard assumptions
\begin{subequations} \label{ass}
\begin{alignat}{4}
   &\exists \cPF>0: &&\|v\|\le \cPF \|Av\|  &&\quad\forall v\in H_0(A)
                 &&\quad\text{(Poincar\'e--Friedrichs)}, \label{PF}\\
   &\exists \cis>0: \sup_{w\in H(A^*),\; \|w\|_{A^*}=1}
                 && \vdual{A^*w}{v} \ge \cis\|v\|   &&\quad\forall v\in L_2(\Omega)
                 &&\quad\text{(inf-sup)}, \label{infsup}
\end{alignat}
\end{subequations}
formulations \eqref{EL}, \eqref{p}, \eqref{d} are well posed and equivalent.
We note that the chosen boundary condition renders \eqref{PF}, \eqref{infsup} equivalent.

Our aim is to provide well-posed formulations that require less regularity than
$u\in H(A)$ and $w\in H(A^*)$.
To this end we consider a regular mesh $\mesh=\{\el\}$ of polyhedrals $\el$ covering
$\Omega$ and introduce the product spaces with (squared) norms
\begin{align*}
   H(A,\mesh)&:=\{v\in L_2(\Omega);\; Av|_\el\in L_2(\el;U)\ \forall\el\in\mesh\},
   &&\|v\|_{A,\mesh}^2 := \|v\|^2 + \|Av\|_\mesh^2,\\
   H(A^*,\mesh)&:=\{w\in L_2(\Omega;U);\; A^*w|_\el\in L_2(\el)\ \forall\el\in\mesh\},
   &&\|w\|_{A^*,\mesh}^2 := \|w\|^2 + \|A^*w\|_\mesh^2.
\end{align*}
Here, $\|\cdot\|_\mesh^2:=\vdual{\cdot}{\cdot}_\mesh$ where the latter indicates
the $L_2(\mesh)$-duality.
In the following, $A_\mesh$ and $A^*_\mesh$ denote the
corresponding $\mesh$-piecewise differential operators, e.g.,
$\|Av\|_\mesh=\|A_\mesh v\|$ for any $v\in H(A,\mesh)$.
Here and in the following, we identify elements of product spaces with corresponding
piecewise defined function.
For instance, $(v_\el)_{\el\in\mesh}\in \Pi_{\el\in\mesh} H(A,\el)$ is identified with
$v\in L_2(\Omega)$ defined by $v|_\el:=v_\el$, $\el\in\mesh$.

We consider closed spaces $\sH(A,\mesh)\subset H(A,\mesh)$ and
$\sH(A^*,\mesh)\subset H(A^*,\mesh)$ that are intermediate:
\begin{align*}
   H_0(A)\subset \sH(A,\mesh)\subset H(A,\mesh),\quad
   H(A^*)\subset \sH(A^*,\mesh)\subset H(A^*,\mesh).
\end{align*}
These spaces induce two trace operators with support on the skeleton
$\cS=\cup\{\partial\el;\el\in\mesh\}$,
\begin{subequations}
\begin{align}
   \label{trAS}
   \trAS:\; &\left\{\begin{array}{cll}
           H(A) &\rightarrow& \sH(A^*,\mesh)^*,\\
           v &\mapsto&
               \dual{\trAS(v)}{w}_\cS :=
               \vdual{Av}{w} - \vdual{v}{A^*w}_\mesh
           \end{array}\right.,\\
   \label{trAsS}
   \trAsS:\; &\left\{\begin{array}{cll}
           H(A^*) &\rightarrow& \sH(A,\mesh)^*,\\
           w &\mapsto&
               \dual{\trAsS(w)}{v}_\cS :=
               \vdual{A^*w}{v} - \vdual{w}{Av}_\mesh
           \end{array}\right.,
\end{align}
\end{subequations}
and corresponding trace spaces
\[
   H(A,\cS) := \trAS(H(A)),\quad H_0(A,\cS) := \trAS(H_0(A)),\quad H(A^*,\cS) := \trAsS(H(A^*))
\]
with norms
\begin{align*}
   &\|\phi\|_{A,\cS} := \inf\{\|v\|_A;\; v\in H(A),\ \trAS(v)=\phi\},\\
   &\|\phi\|_{(A^*,\sim,\mesh)^*} := \sup_{w\in \sH(A^*,\mesh),\; \|w\|_{A^*,\mesh}=1}
                               \dual{\phi}{w}_\cS\qquad (\phi\in H(A,\cS)),\\
   &\|\psi\|_{A^*,\cS} := \inf\{\|w\|_{A^*};\; w\in H(A^*),\ \trAsS(w)=\psi\},\\
   &\|\psi\|_{(A,\sim,\mesh)^*} := \sup_{v\in \sH(A,\mesh),\; \|v\|_{A,\mesh}=1}
                               \dual{\psi}{v}_\cS\qquad (\psi\in H(A^*,\cS)).
\end{align*}
Here, the dualities are defined as
\begin{align*}
   &\dual{\phi}{w}_\cS := \dual{\trAS(v)}{w}_\cS\quad (w\in \sH(A^*,\mesh),\; v\in\trASinv(\phi))\\
   \text{and}\quad
   &\dual{\psi}{v}_\cS := \dual{\trAsS(w)}{v}_\cS\quad (v\in \sH(A,\mesh),\; w\in\trAsSinv(\psi)).
\end{align*}
Of course, in the case that
$\sH(A^*,\mesh)=H(A^*)$, $\trAS=\trA$.

\subsection{Generalized primal hybrid formulation} \label{sec_abstract_gp}

To reformulate \eqref{prob}, we define $w:=\cC Au$. 
Testing $A^*w=f$  with $\du\in\sH(A,\mesh)$ and applying trace operator $\trAsS$,
we find that
\begin{align*}
   \vdual{f}{\du} = \vdual{w}{A\du}_\mesh + \dual{\trAsS(w)}{\du}_\cS.
\end{align*}
We introduce the independent variable $\psi:=\trAsS(w)$ and relax the regularity of $u$
in the weak form of $\cCinv w=Au$.
This leads to a generalized primal-mixed formulation of \eqref{prob}:
\emph{Find $w\in L_2(\Omega;U)$, $u\in\sH(A,\mesh)$, and $\psi\in H(A^*,\cS)$ such that}
\begin{subequations} \label{gp_full}
\begin{alignat}{3}
   &\vdual{\cCinv w}{\dw} - \vdual{A u}{\dw}_\mesh - \dual{\dpsi}{u}_\cS
   &&= 0
   &&\forall\dw\in L_2(\Omega;U),\ \forall\dpsi\in H(A^*,\cS), \label{gpa_full}\\
   &-\vdual{w}{A\du}_\mesh - \dual{\psi}{\du}_\cS
   &&= -\vdual{f}{\du}\quad
   &&\forall\du\in\sH(A,\mesh). \label{gpb_full}
\end{alignat}
\end{subequations}
Elimination of $w$ yields the \emph{generalized primal hybrid formulation}:
\emph{Find $u\in\sH(A,\mesh)$ and $\psi\in H(A^*,\cS)$ such that}
\begin{subequations} \label{gp}
\begin{alignat}{3}
   &\vdual{\cC A u}{A\du}_\mesh + \dual{\psi}{\du}_\cS
   &&= \vdual{f}{\du}\quad
   &&\forall\du\in\sH(A,\mesh), \label{gpa}\\
   & \dual{\dpsi}{u}_\cS &&= 0
   &&\forall\dpsi\in H(A^*,\cS). \label{gpb}
\end{alignat}
\end{subequations}

\begin{theorem} \label{thm_gp}
Let $f\in L_2(\Omega)$ be given and assume that \eqref{PF} holds.
Problem \eqref{gp} is well posed. Its solution $(u,\psi)$ satisfies
\[
   \|u\|_{A,\mesh} + \|\psi\|_{A^*,\cS} \le C \|f\|
\]
with a constant $C$ that is independent of $f$ and $\mesh$.
Furthermore, $u\in H_0(A)$, $\psi=\trAsS(\cC Au)$, and $u$ solves \eqref{prob}.
\end{theorem}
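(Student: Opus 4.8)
The plan is to circumvent the full saddle-point machinery and instead build the solution directly from the already well-posed primal problem \eqref{prob}, exploiting that the right-hand side of \eqref{gpb} is homogeneous. The one structural fact I would isolate first is the kernel identity
\[
   V_0 := \{v\in\sH(A,\mesh);\ \dual{\dpsi}{v}_\cS = 0\ \forall\dpsi\in H(A^*,\cS)\} = H_0(A).
\]
The inclusion $H_0(A)\subseteq V_0$ is immediate: for $v\in H_0(A)$ one has $A_\mesh v=Av$, and writing $\dpsi=\trAsS(z)$ the definition of $\trAsS$ gives $\dual{\dpsi}{v}_\cS=\vdual{A^*z}{v}-\vdual{z}{Av}=-\dual{\trA(v)}{z}_\Gamma=0$. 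The reverse inclusion is the genuinely delicate step and the one I expect to be the main obstacle, since it is where the broken-trace (DPG-style) bookkeeping is cashed in: from $\vdual{A^*z}{v}=\vdual{z}{A_\mesh v}$ for all $z\in H(A^*)$ one first tests with $z\in C_c^\infty(\Omega;U)$ to identify the distributional image of $v$ under $A$ with the piecewise function $A_\mesh v\in L_2(\Omega;U)$, hence $v\in H(A)$; testing then with all of $H(A^*)$ gives $\dual{\trA(v)}{z}_\Gamma=0$, i.e. $\trA(v)=0$ and $v\in H_0(A)$.

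Granting this, existence is by construction. Let $u\in H_0(A)$ be the unique solution of \eqref{prob}, equivalently of \eqref{EL}, which exists under \eqref{PF}. I would set $w:=\cC Au$; since $A^*w=f\in L_2(\Omega)$ we have $w\in H(A^*)$, so $\psi:=\trAsS(w)\in H(A^*,\cS)$ is admissible. Equation \eqref{gpb} then holds because $u\in H_0(A)=V_0$, while \eqref{gpa} follows from the one-line computation $\vdual{\cC Au}{A_\mesh\du}+\dual{\psi}{\du}_\cS=\vdual{w}{A_\mesh\du}+\vdual{A^*w}{\du}-\vdual{w}{A_\mesh\du}=\vdual{f}{\du}$ for every $\du\in\sH(A,\mesh)$.

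Uniqueness and the identification then come from running the same computation backwards. Any solution $(u,\psi)$ of \eqref{gp} satisfies $u\in V_0=H_0(A)$ by \eqref{gpb}; restricting \eqref{gpa} to $\du\in H_0(A)$, where $\dual{\psi}{\du}_\cS=0$, recovers \eqref{EL}, so $u$ must be the unique solution of \eqref{prob}. Reinserting this $u$ into \eqref{gpa} for general $\du$ gives $\dual{\psi}{\du}_\cS=\vdual{f}{\du}-\vdual{\cC Au}{A_\mesh\du}=\dual{\trAsS(\cC Au)}{\du}_\cS$; since every element of $H(A^*,\cS)$ is by definition a functional on $\sH(A,\mesh)$, agreement for all $\du$ forces $\psi=\trAsS(\cC Au)$. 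Thus both components are uniquely determined, which settles uniqueness together with the asserted identities $u\in H_0(A)$, $\psi=\trAsS(\cC Au)$, and that $u$ solves \eqref{prob}.

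The stability bound is then essentially free. Testing \eqref{EL} with $\du=u$ and using \eqref{PF} gives $\|u\|_{A,\mesh}=\|u\|_A\le C\|f\|$, with $C$ depending only on $\cPF$ and the ellipticity bounds of $\cC$. For the multiplier I would not estimate the minimal-extension norm abstractly but simply use the particular extension $\cC Au$ of $\psi$: by definition of $\|\cdot\|_{A^*,\cS}$,
\[
   \|\psi\|_{A^*,\cS}\le\|\cC Au\|_{A^*}=\bigl(\|\cC Au\|^2+\|A^*\cC Au\|^2\bigr)^{1/2}=\bigl(\|\cC Au\|^2+\|f\|^2\bigr)^{1/2}\le C\|f\|,
\]
again with a mesh-independent constant. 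Summing the two estimates yields $\|u\|_{A,\mesh}+\|\psi\|_{A^*,\cS}\le C\|f\|$ and completes the proof. Had one instead verified the Brezzi conditions, the crux would have been the inf-sup for the trace form, equivalently the equivalence of $\|\cdot\|_{A^*,\cS}$ and $\|\cdot\|_{(A,\sim,\mesh)^*}$ on $H(A^*,\cS)$; the construction above sidesteps it.
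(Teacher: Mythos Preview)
Your proof is correct, and it takes a genuinely different route from the paper's. The paper argues via the standard Brezzi saddle-point theory: boundedness of the forms, the inf-sup condition for $\dual{\dpsi}{\cdot}_\cS$ (taken from Lemma~\ref{la_tr}, the norm identity $\|\cdot\|_{A^*,\cS}=\|\cdot\|_{(A,\sim,\mesh)^*}$), the kernel identity $V_0=H_0(A)$ (taken from Lemma~\ref{la_reg}), and coercivity on the kernel from \eqref{PF}. Well-posedness with a mesh-independent constant then falls out of the abstract theory, and the identification $u\in H_0(A)$, $\psi=\trAsS(\cC Au)$ is read off a posteriori exactly as you do.

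You instead build the solution by hand from the primal problem \eqref{EL}, relying only on the kernel identity (whose proof you reproduce inline; it is essentially Lemma~\ref{la_reg}) and on the explicit extension $\cC Au$ to bound $\|\psi\|_{A^*,\cS}$. This is more elementary and, as you note, genuinely sidesteps the inf-sup lemma for the continuous problem: injectivity of $H(A^*,\cS)\hookrightarrow\sH(A,\mesh)^*$ is automatic from the definition of the trace space as a range, and the minimal-extension norm is trivially bounded by any particular extension. The trade-off is that your construction exploits the homogeneous right-hand side in \eqref{gpb} and does not by itself yield the Brezzi constants; the paper's route, while heavier here, is the one that transfers directly to the discrete analysis in Theorem~\ref{thm_gph}, where the inf-sup machinery (via the Fortin operator \eqref{Fgp}) is unavoidable.
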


A proof of this theorem is given in \S\ref{pf_thm_gp}.

For a discretization of \eqref{gp} we select finite-dimensional subspaces
$\sH_h(A,\mesh)\subset \sH(A,\mesh)$, $H_h(A^*,\cS)\subset H(A^*,\cS)$,
assume the existence of a Fortin operator $\opF:\;\sH(A,\mesh)\to \sH_h(A,\mesh)$,
\begin{subequations} \label{Fgp}
\begin{alignat}{2}
   &\dual{\dpsi}{v-\opF v}_\cS=0
   &&\forall \dpsi\in H_h(A^*,\cS),\ \forall v\in \sH(A,\mesh),\\
   \exists \cF>0:\quad &\|\opF v\|_{A,\mesh}\le \cF \|v\|_{A,\mesh}\quad
   &&\forall v\in \sH(A,\mesh),
\end{alignat}
\end{subequations}
and the validity of the discrete Poincar\'e--Friedrichs inequality
\begin{align} \label{PFh}
   &\exists \cPFh>0: \quad\|v\|\le \cPFh \|Av\|_\mesh\quad  \forall v\in \sH_h(A,\mesh)
   \quad\text{with}\quad
   \dual{\dpsi}{v}_\cS=0\quad \forall \dpsi\in H_h(A^*,\cS)
\end{align}
to conclude the well-posedness and quasi-optimal convergence of the
\emph{generalized primal hybrid method}:
\emph{Find $u_h\in\sH_h(A,\mesh)$ and $\psi_h\in H_h(A^*,\cS)$ such that}
\begin{subequations} \label{gph}
\begin{alignat}{3}
   &\vdual{\cC A u_h}{A\du}_\mesh + \dual{\psi_h}{\du}_\cS
   &&= \vdual{f}{\du}\quad
   &&\forall\du\in\sH_h(A,\mesh), \label{gpha}\\
   & \dual{\dpsi}{u_h}_\cS &&= 0
   &&\forall\dpsi\in H_h(A^*,\cS). \label{gphb}
\end{alignat}
\end{subequations}

\begin{theorem} \label{thm_gph}
Under the conditions of Theorem~\ref{thm_gp},
and assuming that \eqref{Fgp} and \eqref{PFh} hold with constants $\cF$, $\cPFh$
independent of $\mesh$ and the discrete subspace,
scheme \eqref{gph} has a unique solution $(u_h,\psi_h)$. It satisfies
\[
   \|u-u_h\|_{A,\mesh} + \|\psi-\psi_h\|_{A^*,\cS}
   \le C \bigl(\|u-v\|_{A,\mesh} + \|\psi-\eta\|_{A^*,\cS}\bigr)
   \quad\forall v\in \sH_h(A,\mesh),\ \forall \eta\in H_h(A^*,\cS)
\]
with a constant $C$ that is independent of $f$, $\mesh$ and the discrete subspaces.
\end{theorem}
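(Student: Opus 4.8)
The plan is to read \eqref{gph} as a discrete saddle-point problem and invoke the discrete Brezzi theory. Writing $a(u,v):=\vdual{\cC Au}{Av}_\mesh$ and $b(v,\eta):=\dual{\eta}{v}_\cS$, the form $a$ is bounded on $\sH(A,\mesh)$ by $\|\cC\|_{L_\infty}$, and $b$ is bounded with constant one: a Cauchy--Schwarz estimate applied to $\dual{\trAsS(w)}{v}_\cS=\vdual{A^*w}{v}-\vdual{w}{Av}_\mesh$, followed by the infimum defining $\|\cdot\|_{A^*,\cS}$, gives $|\dual{\eta}{v}_\cS|\le\|\eta\|_{A^*,\cS}\,\|v\|_{A,\mesh}$. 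Both continuity constants are independent of $\mesh$ and of the subspaces. It then remains to verify the two discrete stability conditions: a discrete inf-sup condition for $b$ on $\sH_h(A,\mesh)\times H_h(A^*,\cS)$, and coercivity of $a$ on the discrete kernel $K_h:=\{v\in\sH_h(A,\mesh);\ \dual{\dpsi}{v}_\cS=0\ \forall\dpsi\in H_h(A^*,\cS)\}$.

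For the discrete inf-sup I would use the Fortin operator. Well-posedness of the continuous problem \eqref{gp} (Theorem~\ref{thm_gp}), established with an $\mesh$-independent constant, encodes in particular a continuous inf-sup condition for $b$ on $\sH(A,\mesh)\times H(A^*,\cS)$ with some $\mesh$-independent $\beta>0$. Given $\eta\in H_h(A^*,\cS)\subset H(A^*,\cS)$, I pick $v\in\sH(A,\mesh)$ near-optimal for this continuous inf-sup and test with $\opF v\in\sH_h(A,\mesh)$: the commuting property in \eqref{Fgp} yields $\dual{\eta}{\opF v}_\cS=\dual{\eta}{v}_\cS$, while boundedness yields $\|\opF v\|_{A,\mesh}\le\cF\|v\|_{A,\mesh}$, so that
\[
   \sup_{v_h\in\sH_h(A,\mesh)}\frac{\dual{\eta}{v_h}_\cS}{\|v_h\|_{A,\mesh}}
   \ge\frac{\dual{\eta}{\opF v}_\cS}{\|\opF v\|_{A,\mesh}}
   =\frac{\dual{\eta}{v}_\cS}{\|\opF v\|_{A,\mesh}}
   \ge\frac{\dual{\eta}{v}_\cS}{\cF\,\|v\|_{A,\mesh}}.
\]
Choosing $v$ near-optimal bounds the last quantity below by $(\beta/\cF)\|\eta\|_{A^*,\cS}$, i.e.\ the discrete inf-sup holds with $\mesh$-independent constant $\beta/\cF$.

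For coercivity on $K_h$ I would use the discrete Poincar\'e--Friedrichs inequality \eqref{PFh} together with the positive definiteness of $\cC$. For $v\in K_h$ we have $a(v,v)\ge c_\cC\|Av\|_\mesh^2$ with $c_\cC>0$ the lower ellipticity bound of $\cC$, while \eqref{PFh} gives $\|v\|_{A,\mesh}^2\le(1+\cPFh^2)\|Av\|_\mesh^2$; hence $a(v,v)\ge \frac{c_\cC}{1+\cPFh^2}\|v\|_{A,\mesh}^2$. With both discrete conditions in hand, the discrete Brezzi theory yields unique solvability of \eqref{gph}, the stability bound, and the asserted quasi-optimal (C\'ea-type) estimate, the final constant $C$ depending only on $\|\cC\|_{L_\infty}$, $c_\cC$, $\beta$, $\cF$, and $\cPFh$, all $\mesh$- and subspace-independent by hypothesis.

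The point that most deserves care -- and the reason \eqref{PFh} is assumed rather than inferred -- is that $K_h$ is \emph{not} contained in the continuous kernel $H_0(A)$: a function $v\in K_h$ is constrained to annihilate only the finite-dimensional space $H_h(A^*,\cS)$, not all of $H(A^*,\cS)$, so the continuous coercivity underlying Theorem~\ref{thm_gp} (which rests on \eqref{PF}) does not transfer to $K_h$. Thus the one genuinely new ingredient relative to the continuous analysis is the discrete Poincar\'e--Friedrichs inequality. Beyond this the only subtlety is bookkeeping, namely confirming that the continuous inf-sup constant $\beta$ entering the Fortin step is itself $\mesh$-independent, which it is precisely because Theorem~\ref{thm_gp} is proved with an $\mesh$-independent constant.
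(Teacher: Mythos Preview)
Your proposal is correct and takes exactly the approach the paper indicates: the paper's proof is just a reference to the standard discrete Brezzi theory (\cite[Theorem~5.2.5, Proposition~5.4.2]{BoffiBF_13_MFE}), and you have spelled out precisely the ingredients that theory requires---uniform boundedness of the forms, the discrete inf-sup via the Fortin operator \eqref{Fgp}, and coercivity on the discrete kernel via \eqref{PFh}. Your remark that $K_h\not\subset H_0(A)$ is the reason \eqref{PFh} must be assumed separately is also exactly the point.
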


A proof of this theorem is standard, cf.~, e.g.,\
\cite[Theorem~5.2.5,~Proposition~5.4.2]{BoffiBF_13_MFE}.

\subsection{Generalized mixed hybrid formulation} \label{sec_abstract_gm}

As before, we introduce $w:=\cC Au$. Testing this relation with $\dw\in\sH(A^*,\mesh)$
and applying trace operator $\trAS$, we find that
\begin{align*}
   \vdual{\cCinv w}{\dw} = \vdual{u}{A^*\dw}_\mesh + \dual{\trAS(u)}{\dw}_\cS.
\end{align*}
We introduce the independent trace variable $\phi:=\trAS(u)$ and
relax the regularity of $w$ in the weak form of relation $A^*w=f$.
This leads to the \emph{generalized mixed hybrid formulation} of \eqref{prob}:
\emph{Find $w\in\sH(A^*,\mesh)$, $u\in L_2(\Omega)$, and $\phi\in H_0(A,\cS)$ such that}
\begin{subequations} \label{gm}
\begin{alignat}{3}
   &\vdual{\cCinv w}{\dw} - \vdual{u}{A^*\dw}_\mesh - \dual{\phi}{\dw}_\cS
   &&= 0
   &&\forall\dw\in\sH(A^*,\mesh), \label{gma}\\
   &-\vdual{A^*w}{\du}_\mesh - \dual{\dphi}{w}_\cS
   &&= -\vdual{f}{\du}\quad
   &&\forall\du\in L_2(\Omega),\ \forall\dphi\in H_0(A,\cS).\label{gmb}
\end{alignat}
\end{subequations}

\begin{theorem} \label{thm_gm}
Let $f\in L_2(\Omega)$ be given and assume that relations \eqref{ass} hold.
Problem \eqref{gm} is well posed. Its solution $(w,u,\phi)$ satisfies
\[
   \|w\|_{A^*,\mesh} + \|u\| + \|\phi\|_{A,\cS} \le C \|f\|
\]
with a constant $C$ that is independent of $f$ and $\mesh$.
Furthermore, $u\in H_0(A)$, $w=\cC Au\in H(A^*)$, $\phi=\trAS(u)$, and $u$ solves \eqref{prob}.
\end{theorem}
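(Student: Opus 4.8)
The plan is to cast \eqref{gm} as a saddle-point problem in the sense of Brezzi and verify the standard hypotheses. I group the variables so that the ``primal'' pair is $(w,u)\in\sH(A^*,\mesh)\times L_2(\Omega)$ and the ``constraint'' (multiplier) variable is $\phi\in H_0(A,\cS)$. The bilinear form on the primal pair is
\[
   a\bigl((w,u),(\dw,\du)\bigr) := \vdual{\cCinv w}{\dw} - \vdual{u}{A^*\dw}_\mesh - \vdual{A^*w}{\du}_\mesh,
\]
and the off-diagonal (constraint) form is $b\bigl((\dw,\du),\dphi\bigr) := \dual{\dphi}{\dw}_\cS$. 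First I would identify the kernel $K$ of $b$: by the very definition \eqref{trAS} of $\trAS$ and its trace space $H_0(A,\cS)$, the condition $\dual{\dphi}{w}_\cS = 0$ for all $\dphi\in H_0(A,\cS)$ forces the piecewise-adjoint action of $w$ to be globally consistent, i.e. $w\in H(A^*)$ with $\trAsS(w)$ unconstrained but $A^*_\mesh w = A^*w$ genuinely in $L_2(\Omega)$. So on $K$ the form $a$ reduces exactly to the bilinear form of the classical mixed formulation \eqref{d}.

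Next I would establish the two Brezzi conditions. The inf-sup condition for $b$ is essentially a surjectivity/lifting statement for the trace operator $\trAS$: given $\phi\in H_0(A,\cS)$, by definition of the norm $\|\phi\|_{A,\cS}$ there is a near-optimal lift $v\in H_0(A)$ with $\trAS(v)=\phi$, and testing with $\dw$ ranging over $\sH(A^*,\mesh)$ recovers $\dual{\phi}{\dw}_\cS$ with control in the $(A^*,\sim,\mesh)^*$ dual norm; the matching of these two norms on the trace space is what must be checked, and it should follow from the intermediate inclusion $H(A^*)\subset\sH(A^*,\mesh)$ together with the definitions. The coercivity (or inf-sup) of $a$ on $K$ is precisely where the classical hypotheses \eqref{ass} enter: on $K$ the problem is the well-posed mixed system \eqref{d}, whose stability is equivalent to the combination of the Poincar\'e--Friedrichs inequality \eqref{PF} and the inf-sup condition \eqref{infsup}. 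This is why the theorem assumes the full \eqref{ass} rather than just \eqref{PF}.

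Having verified both Brezzi conditions with mesh-independent constants, the abstract saddle-point theory gives existence, uniqueness, and the a priori bound $\|w\|_{A^*,\mesh}+\|u\|+\|\phi\|_{A,\cS}\le C\|f\|$. It remains to prove the identification statements. From the kernel characterization, $w\in H(A^*)$ and $A^*w=f$ in $L_2(\Omega)$; feeding this back into \eqref{gma} and using that $\trAS(u)=\phi$ is forced by the first equation (test $\dw$ over all of $\sH(A^*,\mesh)$, recovering the defining identity of $\trAS$), I obtain $u\in H_0(A)$ with $\phi=\trAS(u)$ and $w=\cC Au$. Finally, eliminating $w$ shows $u$ solves \eqref{EL}, hence \eqref{prob}. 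I expect the main obstacle to be the inf-sup condition for $b$: one must argue carefully that testing against the \emph{relaxed} space $\sH(A^*,\mesh)$ (rather than $H(A^*)$) still controls the full trace norm $\|\phi\|_{A,\cS}$, i.e. that the two trace-norm definitions on $H_0(A,\cS)$ are equivalent with a constant independent of $\mesh$. This is the step where the DPG-style matching of trace and quotient norms, stressed in the introduction, does the real work.
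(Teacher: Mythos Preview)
Your approach is correct but genuinely different from the paper's. You group the primal pair as $(w,u)\in\sH(A^*,\mesh)\times L_2(\Omega)$ with multiplier $\phi\in H_0(A,\cS)$; the paper instead keeps $w$ alone in the primal slot and treats $(u,\phi)\in L_2(\Omega)\times H_0(A,\cS)$ jointly as the multiplier, with $a(w,\dw)=\vdual{\cCinv w}{\dw}$ and $b(w;\du,\dphi)=\vdual{A^*w}{\du}_\mesh+\dual{\dphi}{w}_\cS$. In the paper's decomposition the kernel of $b$ is $\{w\in H(A^*):A^*w=0\}$, so coercivity of $a$ is trivial; the work is pushed entirely into a \emph{combined} inf-sup for $b$ on the pair $(\du,\dphi)$, proved as a separate lemma (Lemma~\ref{la_infsup}, an instance of \cite[Theorem~3.3]{CarstensenDG_16_BSF}). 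In your decomposition the inf-sup for $b$ is the easy part (it is exactly the trace-norm identity of Lemma~\ref{la_tr}, with constant $1$), while the work shifts to the inf-sup of the non-coercive $a$ on the kernel $K=H(A^*)\times L_2(\Omega)$, which you correctly identify with the well-posedness of the classical mixed system~\eqref{d}. Both routes yield mesh-independent constants; yours requires invoking the Brezzi theory in its non-coercive form (inf-sup of $a$ on $K$ in both arguments, which follows here by symmetry), whereas the paper stays with the coercive-on-kernel version. A practical advantage of the paper's grouping is that the combined inf-sup lemma is reused verbatim in the discrete analysis (Proposition~\ref{prop_split_h}), which proves exactly the discrete analogue of that combined inf-sup; your splitting would force a separate discrete inf-sup argument for $a$ on the discrete kernel.
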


A proof of this theorem in given in \S\ref{pf_thm_gm}.

For a discretization of \eqref{gm} we select finite-dimensional subspaces
$\sH_h(A^*,\mesh)\subset \sH(A^*,\mesh)$, $H_h(\mesh)\subset L_2(\Omega)$,
$H_h(A,\cS)\subset H_0(A,\cS)$, assume that there are operators
\begin{subequations} \label{Fgm}
\begin{align}
   &\opF_1:\:\;\sH(A^*,\mesh)\to \sH_h(A^*,\mesh)\cap H(A^*),\\
   &\opF_2:\;\sH(A^*,\mesh)\to \sH_h(A^*,\mesh)
\end{align}
that satisfy
\begin{alignat}{2}
   &\vdual{A^*(w-\opF_1 w)}{\du}_\mesh=0\quad
   &&\forall \du\in H_h(\mesh),\ \forall w\in \sH(A^*,\mesh), \label{Fgm1}\\
   &\dual{\dphi}{w-\opF_2 w}_\cS=0\quad
   &&\forall \dphi\in H_h(A,\cS),\ \forall w\in \sH(A^*,\mesh), \label{Fgm2}\\
   \exists C_1>0:\quad &\|\opF_1 w\|_{A^*,\mesh}\le C_1 \|w\|_{A^*,\mesh}
   &&\forall w\in \sH(A^*,\mesh), \label{Fgm3}\\
   \exists C_2>0:\quad &\|\opF_2 w\|_{A^*,\mesh}\le C_2 \|w\|_{A^*,\mesh}
   &&\forall w\in \sH(A^*,\mesh), \label{Fgm4}
\end{alignat}
\end{subequations}
and the validity of relation
\begin{align} \label{subh}
   A^*_\mesh \bigl(\sH(A^*,\mesh)\bigr)\subset H_h(\mesh)
\end{align}
to conclude the well-posedness and quasi-optimal convergence of the
\emph{generalized mixed hybrid method}:
\emph{Find $w_h\in\sH_h(A^*,\mesh)$, $u_h\in H_h(\mesh)$, and $\phi_h\in H_h(A,\cS)$ such that}
\begin{subequations} \label{gmh}
\begin{alignat}{3}
   &\vdual{\cCinv w_h}{\dw} - \vdual{u_h}{A^*\dw}_\mesh - \dual{\phi_h}{\dw}_\cS
   &&= 0
   &&\forall\dw\in\sH_h(A^*,\mesh), \label{gmha}\\
   &-\vdual{A^*w_h}{\du}_\mesh - \dual{\dphi}{w_h}_\cS
   &&= -\vdual{f}{\du}\quad
   &&\forall\du\in H_h(\mesh),\ \forall\dphi\in H_h(A,\cS).\label{gmhb}
\end{alignat}
\end{subequations}
Properties \eqref{Fgm} imply that a discrete inf-sup condition holds, as we establish now.
This result is a discrete analogue of \cite[Theorem~3.3]{CarstensenDG_16_BSF} which considers
the continuous inf-sup condition.

\begin{prop} \label{prop_split_h}
If operators $\opF_1$, $\opF_2$ with properties \eqref{Fgm} exist, then any
$\du\in H_h(\mesh)$ and $\dphi\in H_h(A,\cS)$ are bounded as
\begin{equation} \label{infsup_h}
   \|\du\| + \|\dphi\|_{A,\cS}
   \le  (C_2 + \frac {C_1}{\cis} (C_2+1))
        \sup_{w\in\sH_h(A^*,\mesh)\setminus\{0\}}
        \frac {\vdual{A^* w}{\du}_\mesh + \dual{\dphi}{w}_\cS}{\|w\|_{A^*,\mesh}}.
\end{equation}
\end{prop}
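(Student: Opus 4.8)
The plan is to estimate $\|\du\|$ and $\|\dphi\|_{A,\cS}$ one at a time by testing \eqref{infsup_h} with two specific elements of $\sH_h(A^*,\mesh)$, each produced from a continuous test function through one of the Fortin operators. Write $S$ for the supremum on the right of \eqref{infsup_h}, so that $\vdual{A^*w}{\du}_\mesh+\dual{\dphi}{w}_\cS\le S\,\|w\|_{A^*,\mesh}$ for every $w\in\sH_h(A^*,\mesh)$. The structural fact I would isolate first is that the skeleton pairing ignores globally conforming test functions: if $w\in H(A^*)$, then, choosing a lift $v\in H_0(A)$ of $\dphi$ (possible since $\dphi\in H_h(A,\cS)\subset H_0(A,\cS)$), one has $\dual{\dphi}{w}_\cS=\vdual{Av}{w}-\vdual{v}{A^*w}=\dual{\trA(v)}{w}_\Gamma=0$, because $A^*w\in L_2(\Omega)$ globally and $\trA(v)=0$.

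To bound $\|\du\|$, I would apply the inf-sup assumption \eqref{infsup} to $\du\in L_2(\Omega)$, obtaining for any $\epsilon>0$ some $w_u\in H(A^*)$ with $\|w_u\|_{A^*}=1$ and $\vdual{A^*w_u}{\du}\ge\cis\|\du\|-\epsilon$. Setting $w_u^h:=\opF_1 w_u\in\sH_h(A^*,\mesh)\cap H(A^*)$, property \eqref{Fgm1} preserves the volume pairing, $\vdual{A^*w_u^h}{\du}_\mesh=\vdual{A^*w_u}{\du}_\mesh=\vdual{A^*w_u}{\du}$, while the structural fact forces $\dual{\dphi}{w_u^h}_\cS=0$. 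Testing \eqref{infsup_h} with $w_u^h$ and using \eqref{Fgm3} (note $\|w_u^h\|_{A^*,\mesh}=\|w_u^h\|_{A^*}$) gives $\cis\|\du\|-\epsilon\le S\,\|w_u^h\|_{A^*,\mesh}\le C_1 S$, whence $\|\du\|\le\tfrac{C_1}{\cis}S$ as $\epsilon\to0$.

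To bound $\|\dphi\|_{A,\cS}$, I would use the identity of the quotient and dual trace norms, $\|\dphi\|_{A,\cS}=\|\dphi\|_{(A^*,\sim,\mesh)^*}$, i.e.\ the continuous inf-sup of the trace operator, which is the instance $\du=0$ of \cite[Theorem~3.3]{CarstensenDG_16_BSF}. This yields $w_\phi\in\sH(A^*,\mesh)$ with $\|w_\phi\|_{A^*,\mesh}\le1$ and $\dual{\dphi}{w_\phi}_\cS\ge\|\dphi\|_{A,\cS}-\epsilon$. With $w_\phi^h:=\opF_2 w_\phi\in\sH_h(A^*,\mesh)$, property \eqref{Fgm2} preserves the skeleton pairing and \eqref{Fgm4} gives $\|w_\phi^h\|_{A^*,\mesh}\le C_2$. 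The volume pairing no longer vanishes, but is controlled by the quantity just bounded: from $\dual{\dphi}{w_\phi^h}_\cS=\bigl(\vdual{A^*w_\phi^h}{\du}_\mesh+\dual{\dphi}{w_\phi^h}_\cS\bigr)-\vdual{A^*w_\phi^h}{\du}_\mesh$, the estimate $|\vdual{A^*w_\phi^h}{\du}_\mesh|\le\|A^*w_\phi^h\|_\mesh\,\|\du\|\le C_2\|\du\|$, and \eqref{infsup_h}, I obtain $\|\dphi\|_{A,\cS}\le C_2 S+C_2\|\du\|$.

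Combining the two estimates, insertion of $\|\du\|\le\tfrac{C_1}{\cis}S$ into $\|\dphi\|_{A,\cS}\le C_2 S+C_2\|\du\|$ gives $\|\dphi\|_{A,\cS}\le\bigl(C_2+\tfrac{C_1 C_2}{\cis}\bigr)S$, and adding the bound for $\|\du\|$ produces exactly $\|\du\|+\|\dphi\|_{A,\cS}\le\bigl(C_2+\tfrac{C_1}{\cis}(C_2+1)\bigr)S$. I expect the cross term to be the main obstacle: since each Fortin operator annihilates only one of the two pairings, the estimate must be staged in the right order---$\|\du\|$ first, via the conforming-range operator $\opF_1$ whose image lies in $H(A^*)$ and hence kills the skeleton term outright, and $\|\dphi\|_{A,\cS}$ second, via $\opF_2$, whose leftover volume term is absorbed into the already-controlled $\|\du\|$. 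The one genuinely non-elementary input is the trace-norm identity of the third paragraph, whose nontrivial ``quotient $\le$ dual'' direction is precisely where the continuous framework (the inf-sup/closed-range property behind \eqref{infsup}) enters.
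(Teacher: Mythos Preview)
Your proof is correct and follows essentially the same approach as the paper: bound $\|\du\|$ first via $\opF_1$ (whose conforming range kills the skeleton term by Lemma~\ref{la_reg}), then bound $\|\dphi\|_{A,\cS}$ via $\opF_2$ and the trace-norm identity of Lemma~\ref{la_tr}, absorbing the leftover volume term into the already-controlled $\|\du\|$. The only cosmetic difference is that the paper writes the argument as chains of suprema rather than selecting near-optimal test functions, and attributes the norm identity to its own Lemma~\ref{la_tr} rather than to \cite{CarstensenDG_16_BSF}.
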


A proof of this proposition is given in \S\ref{pf_prop_split_h}.
Let us state the well-posedness and quasi-optimal convergence of the
generalized mixed hybrid method.

\begin{theorem} \label{thm_gmh}
Under the conditions of Theorem~\ref{thm_gm}, assuming \eqref{subh} and the existence of
operators $\opF_1$, $\opF_2$ that satisfy \eqref{Fgm} with constants $C_1$, $C_2$
independent of $\mesh$ and the discrete subspaces,
scheme \eqref{gmh} has a unique solution $(w_h,u_h,\phi_h)$. It satisfies
\[
   \|w-w_h\|_{A^*,\mesh} + \|u-u_h\| + \|\phi-\phi_h\|_{A,\cS}
   \le C \bigl(\|w-z\|_{A^*,\mesh} + \|u-v\| + \|\phi-\eta\|_{A,\cS}\bigr)
\]
for any $z\in \sH_h(A^*,\mesh)$, $v\in H_h(\mesh)$, and $\eta\in H_h(A,\cS)$
with a constant $C$ that is independent of $f$, $\mesh$ and the discrete subspaces.
\end{theorem}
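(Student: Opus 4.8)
The plan is to cast scheme~\eqref{gmh} as a discrete saddle-point problem in the Babu\v{s}ka--Brezzi framework and to verify the three standard hypotheses---continuity of the two bilinear forms, coercivity of the principal form on the discrete kernel, and the discrete inf-sup condition---and then to invoke the abstract theory (e.g., \cite[Theorem~5.2.5, Proposition~5.4.2]{BoffiBF_13_MFE}) to obtain existence, uniqueness, and the stated quasi-optimal estimate. To this end I would write $a(w,\dw):=\vdual{\cCinv w}{\dw}$ for the principal block and $b(w,(\du,\dphi)):=-\vdual{A^*w}{\du}_\mesh-\dual{\dphi}{w}_\cS$ for the off-diagonal block, so that \eqref{gmh} is precisely the discrete mixed problem associated with the pair $(a,b)$ on the spaces $\sH_h(A^*,\mesh)$ and $H_h(\mesh)\times H_h(A,\cS)$.

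Continuity of $a$ and $b$ is immediate: $a$ is bounded on $\sH(A^*,\mesh)$ by the $L_\infty$-bound on $\cC$, and $b$ is bounded by the Cauchy--Schwarz inequality together with the very definitions of the norms $\|\cdot\|$, $\|\cdot\|_{A,\cS}$, and $\|\cdot\|_{A^*,\mesh}$. The discrete inf-sup condition for $b$ is already supplied by Proposition~\ref{prop_split_h}: inequality~\eqref{infsup_h} is, up to a harmless sign, exactly the assertion that $b$ satisfies a discrete inf-sup condition with constant $\beta=(C_2+\tfrac{C_1}{\cis}(C_2+1))^{-1}$, which is independent of $\mesh$ and of the discrete subspaces by hypothesis.

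The one genuinely new point is coercivity of $a$ on the discrete kernel
\[
   K_h:=\{w_h\in\sH_h(A^*,\mesh);\ \vdual{A^*w_h}{\du}_\mesh+\dual{\dphi}{w_h}_\cS=0\ \ \forall\du\in H_h(\mesh),\ \forall\dphi\in H_h(A,\cS)\}.
\]
Here the difficulty is only apparent. The form $a(w_h,w_h)=\vdual{\cCinv w_h}{w_h}$ controls $\|w_h\|^2$ through positive definiteness of $\cC$, but the ambient norm $\|w_h\|_{A^*,\mesh}^2=\|w_h\|^2+\|A^*w_h\|_\mesh^2$ also contains the derivative term. I would dispose of the latter using~\eqref{subh}: since $w_h\in\sH(A^*,\mesh)$, relation~\eqref{subh} guarantees $A^*_\mesh w_h\in H_h(\mesh)$, so $\du=A^*_\mesh w_h$ is an admissible test function in the defining relation of $K_h$ (with $\dphi=0$), yielding $\|A^*_\mesh w_h\|_\mesh^2=0$ and hence $A^*_\mesh w_h=0$ for every $w_h\in K_h$. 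Consequently $\|w_h\|_{A^*,\mesh}=\|w_h\|$ on $K_h$, and the coercivity $a(w_h,w_h)\ge\alpha\|w_h\|_{A^*,\mesh}^2$ follows with $\alpha$ the lower spectral bound of $\cCinv$, which is again independent of $\mesh$.

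With continuity, kernel coercivity, and the discrete inf-sup condition in hand---all with constants independent of $\mesh$ and of the discrete subspaces---the abstract Brezzi theory delivers the unique discrete solution $(w_h,u_h,\phi_h)$ together with the C\'ea-type quasi-optimal bound, the final constant $C$ being an explicit function of $\|\cC\|_\infty$, $\alpha$, and $\beta$ and thus mesh-independent as claimed. I expect no real obstacle beyond the norm mismatch in the coercivity step, which~\eqref{subh} resolves cleanly; the genuinely hard analytical work---the inf-sup estimate---has already been carried out in Proposition~\ref{prop_split_h}.
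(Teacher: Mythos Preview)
Your proposal is correct and follows essentially the same approach as the paper: invoke the standard Brezzi theory \cite[Theorem~5.2.5, Proposition~5.4.2]{BoffiBF_13_MFE}, with the discrete inf-sup condition supplied by Proposition~\ref{prop_split_h} and kernel coercivity of $a$ obtained from~\eqref{subh} exactly as you describe (testing with $\du=A^*_\mesh w_h\in H_h(\mesh)$ to kill the derivative term). The paper even states the coercivity on the slightly larger set $\{w\in\sH_h(A^*,\mesh):\vdual{A^*w}{\du}=0\ \forall\du\in H_h(\mesh)\}$, which of course contains your $K_h$.
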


The comment to the proof of Theorem~\ref{thm_gph} applies in this case as well,
noting that the discrete inf-sup property is satisfied due to Proposition~\ref{prop_split_h}
and that relation \eqref{subh} implies the uniform coercivity
\[
   \vdual{\cCinv w}{w}\gtrsim \|w\|_{A^*,\mesh}^2\quad
   \forall w\in \sH_h(A^*,\mesh)\ \text{with}\
   \vdual{A^* w}{\du}=0\ \forall \du\in H_h(\mesh).
\]

\subsection{General ultraweak formulation} \label{sec_abstract_gu}

For completeness we also consider a general ultraweak formulation of \eqref{prob}
and show its well-posedness.
We introduce $w:=\cC Au$ and combine trace relations \eqref{gpb_full} and \eqref{gma}
with the two independent trace variables $\phi:=\trAS(u)$, $\psi:=\trAsS(w)$.
This gives the \emph{general ultraweak formulation}:
\emph{Find $u\in L_2(\Omega)$, $w\in L_2(\Omega;U)$, $\phi\in H_0(A,\cS)$,
and $\psi\in H(A^*,\cS)$ such that}
\begin{subequations} \label{gu}
\begin{alignat}{3}
   &\vdual{\cCinv w}{\dw} - \vdual{u}{A^*\dw}_\mesh - \dual{\phi}{\dw}_\cS
   &&= 0
   &&\forall\dw\in\sH(A^*,\mesh), \label{gua}\\
   &-\vdual{w}{A\du}_\mesh - \dual{\psi}{\du}_\cS
   &&= -\vdual{f}{\du}\quad
   &&\forall\du\in\sH(A,\mesh). \label{gub}
\end{alignat}
\end{subequations}
Of course, $w$ can be eliminated but this requires to introduce a test space of higher regularity
than we are considering here.

\begin{theorem} \label{thm_gu}
Let $f\in L_2(\Omega)$ be given and assume that relations \eqref{ass} hold.
Problem \eqref{gu} is well posed. Its solution $(w,u,\phi)$ satisfies
\[
   \|u\| + \|w\| + \|\phi\|_{A,\cS} + \|\psi\|_{A^*,\cS} \le C \|f\|
\]
with a constant $C$ that is independent of $f$ and $\mesh$.
Furthermore, $u\in H_0(A)$, $w=\cC Au\in H(A^*)$, $\phi=\trAS(u)$, $\psi=\trAsS(w)$,
and $u$ solves \eqref{prob}.
\end{theorem}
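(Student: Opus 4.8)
The plan is to prove the theorem constructively rather than through an abstract inf-sup for the four-field form of \eqref{gu}, by deriving its solution from the strong problem \eqref{prob}. Since \eqref{ass} is assumed, the Euler--Lagrange problem \eqref{EL} is well posed and provides a unique $u\in H_0(A)$ solving \eqref{prob}; I would set $w:=\cC Au$, $\phi:=\trAS(u)$, and $\psi:=\trAsS(w)$, observing that $A^*w=A^*\cC Au=f\in L_2(\Omega)$ and hence $w\in H(A^*)$. That this quadruple satisfies \eqref{gu} is then a direct reading of the trace definitions \eqref{trAS} and \eqref{trAsS}: after inserting $\cCinv w=Au$ and $\phi=\trAS(u)$, equation \eqref{gua} is nothing but the defining identity $\dual{\trAS(u)}{\dw}_\cS=\vdual{Au}{\dw}-\vdual{u}{A^*\dw}_\mesh$, and \eqref{gub} is the identity $\dual{\trAsS(w)}{\du}_\cS=\vdual{A^*w}{\du}-\vdual{w}{A\du}_\mesh$ combined with $A^*w=f$. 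This settles existence and, simultaneously, all the regularity and identification claims for the constructed object.

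Uniqueness is where the actual content lies, and I would argue it by decoupling field and trace variables through conforming test functions. Using $H(A^*)\subset\sH(A^*,\mesh)$ and $H_0(A)\subset\sH(A,\mesh)$, one checks that $\dual{\phi}{\dw}_\cS=0$ for $\dw\in H(A^*)$ and $\dual{\psi}{\du}_\cS=0$ for $\du\in H_0(A)$, since $\trA$ vanishes on $H_0(A)$ and conforming arguments carry no interelement jumps. With $f=0$, restricting \eqref{gua} to $\dw\in H(A^*)$ leaves $\vdual{u}{A^*\dw}=\vdual{\cCinv w}{\dw}$ for all such $\dw$; this is exactly the weak statement that $u\in H_0(A)$ and $w=\cC Au$, and recovering this strong structure from the relaxed formulation is the most delicate step. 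Restricting \eqref{gub} to $\du\in H_0(A)$ then gives $\vdual{w}{A\du}=0$, and the choice $\du=u$ together with positive definiteness of $\cC$ and the Poincar\'e--Friedrichs inequality \eqref{PF} forces $u=0$ and $w=0$. Finally $\phi=0$ and $\psi=0$ are immediate, because $\phi,\psi$ live in the trace spaces $H_0(A,\cS)$, $H(A^*,\cS)$ defined as functionals on $\sH(A,\mesh)$, $\sH(A^*,\mesh)$, so the residual identities $\dual{\phi}{\dw}_\cS=0$ and $\dual{\psi}{\du}_\cS=0$ for all test functions say precisely that these functionals vanish.

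With uniqueness established, the unique solution equals the quadruple constructed in the first step, and the remaining assertions follow at once. Because the trace norms are infima of graph norms over preimages, $\|\phi\|_{A,\cS}\le\|u\|_A$ and $\|\psi\|_{A^*,\cS}\le\|w\|_{A^*}$; the well-posedness of \eqref{EL} gives $\|u\|_A\lesssim\|f\|$, and $A^*w=f$ gives $\|w\|_{A^*}^2=\|w\|^2+\|f\|^2\lesssim\|f\|^2$, so summation yields $\|u\|+\|w\|+\|\phi\|_{A,\cS}+\|\psi\|_{A^*,\cS}\le C\|f\|$ with a constant depending only on $\cC$ and $\cPF$, hence independent of $\mesh$. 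The identification $u\in H_0(A)$, $w=\cC Au\in H(A^*)$, $\phi=\trAS(u)$, $\psi=\trAsS(w)$ and the fact that $u$ solves \eqref{prob} are read directly off the construction. No separate inf-sup or adjoint-nondegeneracy verification is then needed, since existence is explicit and uniqueness is accounted for above.
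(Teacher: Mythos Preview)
Your argument is correct and takes a genuinely different route from the paper. The paper treats \eqref{gu} as an operator equation $\cB:\cU\to\cV^*$ and verifies the Babu\v{s}ka--Ne\v{c}as conditions: injectivity of $\cB^*$ (by showing that any $(\du,\dw)$ annihilated by all of $\cU$ must lie in $H_0(A)\times H(A^*)$ and then solve the homogeneous strong problem) and the inf-sup bound for $\cB$ via the abstract splitting result \cite[Theorem~3.3]{CarstensenDG_16_BSF}, reducing the latter to stability of the adjoint problem. You instead construct the solution directly from \eqref{EL}, then prove uniqueness by restricting to conforming test functions $\dw\in H(A^*)$ and $\du\in H_0(A)$, which is exactly the paper's injectivity argument read backwards; the stability estimate then comes for free from the construction and the definition of the trace norms as infima. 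Your approach is more elementary and avoids the external splitting lemma, but it only yields well-posedness for right-hand sides of the form $L(\du,\dw)=-\vdual{f}{\du}$, whereas the paper's route delivers the uniform inf-sup constant for $b(\cdot,\cdot)$ over all of $\cU\times\cV$. That stronger statement is what is actually used downstream in the proof of Theorem~\ref{thm_gumh}, where the discrete inf-sup is inherited from the continuous one via the Fortin operator; your argument proves Theorem~\ref{thm_gu} as stated but would leave that later step without its main ingredient. One minor slip: you write that $\phi,\psi$ are functionals on $\sH(A,\mesh),\sH(A^*,\mesh)$, but it is the other way around ($\phi$ acts on $\sH(A^*,\mesh)$ and $\psi$ on $\sH(A,\mesh)$); your subsequent use of the residual identities has them paired correctly, so this does not affect the argument.
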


A proof of this theorem is given in \S\ref{pf_thm_gu}.

System \eqref{gu} is unsymmetric and any direct discretization would be
a Petrov--Galerkin scheme, the prototype being the discontinuous Petrov--Galerkin (DPG)
method with optimal test functions \cite{DemkowiczG_14_ODM}.
We aim at symmetric formulations and extend \eqref{gu} to a symmetric mixed system
by introducing the residual as independent variable.
This is in fact the stabilization idea of Dahmen et al.,
see~\cite{CohenDW_12_AVS,DahmenHSW_12_APG}, and equivalent to the DPG approach,
see~\cite[(2.21)]{DemkowiczG_14_ODM}.

To formulate the extended mixed system we
introduce the linear functional $L((\du,\dw)):=-\vdual{f}{\du}$ and
abbreviate the bilinear form from \eqref{gu} by
\[
   b(\bu,\dbv):=
   \vdual{\cCinv w}{\dw} - \vdual{u}{A^*\dw}_\mesh - \dual{\phi}{\dw}_\cS
   -\vdual{w}{A\du}_\mesh - \dual{\psi}{\du}_\cS
\]
with
\begin{alignat*}{2}
   &\bu=(u,w,\phi,\psi)
   &&\in\cU(\mesh):=L_2(\Omega)\times L_2(\Omega;U)\times H_0(A,\cS)\times H(A^*,\cS)\\
   \text{and}\quad
   &\dbv=(\du,\dw)
   &&\in\cV(\mesh):=\sH(A,\mesh)\times \sH(A^*,\mesh).
\end{alignat*}
Let $\ip{\cdot}{\cdot}_{\cV(\mesh)}$ denote the inner product of $\cV(\mesh)$
with norm $\|\cdot\|_{\cV(\mesh)}$.
For $\bu\in\cU(\mesh)$, we introduce the Riesz representation $\bv\in\cV(\mesh)$ of the
(negative) residual of \eqref{gu},
\begin{align*}
   \ip{\bv}{\dbv}_{\cV(\mesh)} = L(\dbv)-b(\bu,\dbv)\ \forall\dbv\in\cV(\mesh)
   \quad\text{and}\quad
   \|\bv\|_{\cV(\mesh)} = \|b(\bu,\cdot)-L\|_{\cV(\mesh)^*}.
\end{align*}
This leads to the (trivial) mixed representation of \eqref{gu}:
\emph{Find $\bv\in\cV(\mesh)$ and $\bu\in\cU(\mesh)$ such that}
\begin{subequations} \label{gum}
\begin{alignat}{3}
   &\ip{\bv}{\dbv}_{\cV(\mesh)} + b(\bu,\dbv) &&= L(\dbv)\quad
   &&\forall\dbv\in\cV(\mesh), \label{guma}\\
   &b(\dbu,\bv) &&= 0\quad
   &&\forall\dbu\in\cU(\mesh). \label{gumb}
\end{alignat}
\end{subequations}
System \eqref{gu} is well posed by Theorem~\ref{thm_gu} and therefore, mixed formulation
\eqref{gum} is well posed as well.

An abstract discretization is formulated in the canonical way.
We select finite-dimensional subspaces $\cV_h(\mesh)\subset\cV(\mesh)$, $\cU_h(\mesh)\subset\cU(\mesh)$, and
assume the existence of a Fortin operator $\opF:\;\cV(\mesh)\to\cV_h(\mesh)$,
\begin{subequations} \label{Fgum}
\begin{alignat}{2}
   &b(\dbu,\bv-\opF\bv)=0\
   &&\forall \dbu\in \cU_h(\mesh),\ \forall \bv\in\cV(\mesh),\\
   \exists \cF>0:\quad & \|\opF \bv\|_{\cV(\mesh)}\le \cF \|\bv\|_{\cV(\mesh)}\
   &&\forall \bv\in \cV(\mesh),
\end{alignat}
\end{subequations}
to conclude the well-posedness and quasi-optimal convergence of the
\emph{mixed general ultraweak (DPG) method}:
\emph{Find $\bv_h\in\cV_h(\mesh)$ and $\bu_h\in\cU_h(\mesh)$ such that}
\begin{subequations} \label{gumh}
\begin{alignat}{3}
   &\ip{\bv_h}{\dbv}_{\cV(\mesh)} + b(\bu_h,\dbv) &&= L(\dbv)\quad
   &&\forall\dbv\in\cV_h(\mesh), \label{gumha}\\
   &b(\dbu,\bv_h) &&= 0\quad
   &&\forall\dbu\in\cU_h(\mesh). \label{gumhb}
\end{alignat}
\end{subequations}

\begin{theorem} \label{thm_gumh}
Under the conditions of Theorem~\ref{thm_gu}, and assuming \eqref{Fgum} with a constant
$\cF$ independent of $\mesh$ and the discrete subspaces,
scheme \eqref{gumh} has a unique solution
$(\bv_h,\bu_h)$ with components $\bu_h=(u_h,w_h,\phi_h,\psi_h)$.
It satisfies
\[
   \|\bu-\bu_h\|_{\cU(\mesh)}^2
   = \|u-u_h\|^2 + \|w-w_h\|^2 + \|\phi-\phi_h\|_{A,\cS}^2 + \|\psi-\psi_h\|_{A^*,\cS}^2
   \le
   C \|\bu-\bw\|_{\cU(\mesh)}^2
\]
for any $\bw\in \cU_h(\mesh)$ with a constant $C>0$ that is independent of $f$, $\mesh$,
and the discrete subspaces.
\end{theorem}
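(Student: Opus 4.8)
The plan is to treat \eqref{gumh} as a standard mixed (saddle-point) discretization and to verify the two Brezzi conditions uniformly in $\mesh$ and in the chosen subspaces; well-posedness and quasi-optimality then follow from the abstract theory for mixed methods exactly as for Theorem~\ref{thm_gph}. The decisive structural simplification is that the leading bilinear form $\ip{\cdot}{\cdot}_{\cV(\mesh)}$ is the inner product of $\cV(\mesh)$, hence coercive on all of $\cV_h(\mesh)$ with constant $1$. Consequently the first Brezzi condition---coercivity on the discrete kernel $\{\bv_h\in\cV_h(\mesh);\; b(\dbu,\bv_h)=0\ \forall\dbu\in\cU_h(\mesh)\}$---holds automatically and uniformly, and no kernel analysis is needed. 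It therefore only remains to establish a uniform discrete inf-sup condition for $b$ on $\cU_h(\mesh)\times\cV_h(\mesh)$.

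For this I would start from the continuous level: by Theorem~\ref{thm_gu} the system \eqref{gu}, equivalently its mixed form \eqref{gum}, is well posed, so $b$ satisfies a continuous inf-sup condition with some $\gamma>0$ depending only on $\cPF$, $\cis$, and $\cC$,
\[
   \sup_{\dbv\in\cV(\mesh)\setminus\{0\}}\frac{b(\bu,\dbv)}{\|\dbv\|_{\cV(\mesh)}}\ge\gamma\,\|\bu\|_{\cU(\mesh)}\qquad\forall\,\bu\in\cU(\mesh).
\]
Given $\bu\in\cU_h(\mesh)$, I would transfer this to $\cV_h(\mesh)$ by Fortin's argument using \eqref{Fgum}: for any $\dbv\in\cV(\mesh)$ with $b(\bu,\dbv)>0$ the orthogonality in \eqref{Fgum} gives $b(\bu,\opF\dbv)=b(\bu,\dbv)>0$, so $\opF\dbv\neq 0$, while the boundedness in \eqref{Fgum} gives $\|\opF\dbv\|_{\cV(\mesh)}\le\cF\|\dbv\|_{\cV(\mesh)}$, whence
\[
   \frac{b(\bu,\opF\dbv)}{\|\opF\dbv\|_{\cV(\mesh)}}=\frac{b(\bu,\dbv)}{\|\opF\dbv\|_{\cV(\mesh)}}\ge\frac{1}{\cF}\,\frac{b(\bu,\dbv)}{\|\dbv\|_{\cV(\mesh)}}.
\]
Taking the supremum over such $\dbv$ and using $\opF\dbv\in\cV_h(\mesh)$ yields the discrete inf-sup constant $\gamma/\cF$, uniform in $\mesh$ and the subspaces since $\gamma$ and $\cF$ are.

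With unit coercivity and the uniform discrete inf-sup in hand, the abstract mixed theory (e.g.\ \cite[Theorem~5.2.5,~Proposition~5.4.2]{BoffiBF_13_MFE}) delivers a unique discrete solution $(\bv_h,\bu_h)$ and the quasi-optimal bound $\|\bv-\bv_h\|_{\cV(\mesh)}+\|\bu-\bu_h\|_{\cU(\mesh)}\le C\inf_{\bw\in\cU_h(\mesh)}\|\bu-\bw\|_{\cU(\mesh)}$, with $C$ built from $\gamma/\cF$ and the continuity of $b$, hence independent of $f$, $\mesh$, and the subspaces. Finally, since $\bu$ solves \eqref{gu} exactly the residual $b(\bu,\cdot)-L$ vanishes, so the continuous representation is $\bv=0$ and $\|\bv-\bv_h\|_{\cV(\mesh)}=\|\bv_h\|_{\cV(\mesh)}\ge 0$; dropping this term, squaring, and expanding $\|\cdot\|_{\cU(\mesh)}^2$ into its four components gives precisely the asserted estimate. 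The only genuine step will be the Fortin transfer of the inf-sup; everything else is structural, the leading form being an inner product so that coercivity is free.
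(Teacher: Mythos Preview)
Your argument is correct. You verify the two Brezzi conditions for \eqref{gumh} directly: coercivity on all of $\cV_h(\mesh)$ is free because the leading form is the inner product, and the discrete inf-sup for $b$ follows from the continuous one (contained in the proof of Theorem~\ref{thm_gu}) via the Fortin transfer \eqref{Fgum}. Then you invoke the abstract mixed theory as in Theorem~\ref{thm_gph}, use $\bv=0\in\cV_h(\mesh)$ to kill the $\cV$-approximation term in the quasi-optimal estimate, and arrive at the claimed bound.

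The paper takes a different route. Rather than treating \eqref{gumh} as a Brezzi saddle-point problem, it reads \eqref{gumha} as the Riesz representation of the discrete residual $L-b(\bu_h,\cdot)\in\cV_h(\mesh)^*$ and \eqref{gumhb} as the first-order optimality condition characterizing
\[
   \bu_h=\mathrm{arg\,min}_{\bw\in\cU_h(\mesh)}\|b(\bw,\cdot)-L\|_{\cV_h(\mesh)^*}.
\]
Quasi-optimality then follows from the DPG/minimum-residual theory (the Fortin operator makes $\|\cdot\|_{\cV_h(\mesh)^*}$ and $\|\cdot\|_{\cV(\mesh)^*}$ uniformly equivalent on the relevant functionals), with references to \cite{BroersenS_14_RPG,GopalakrishnanQ_14_APD}. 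Your approach has the advantage of staying entirely within the Brezzi framework already used for Theorems~\ref{thm_gph} and~\ref{thm_gmh}, making the paper more self-contained; the paper's approach emphasizes the connection to DPG and residual minimization, which is the conceptual origin of \eqref{gum}--\eqref{gumh} and explains why the leading form is the inner product in the first place.
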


\begin{proof}
By the well-posedness of \eqref{gum} and the existence of a Fortin operator,
scheme \eqref{gumh} is well posed.
Equation~\eqref{gumha} means that $\bv_h$ is the Riesz representation
of the residual $L-b(\bu_h,\cdot)\in \cV_h(\mesh)^*$,
and \eqref{gumhb} implies that
\begin{align*}
   \bu_h=\mathrm{arg\,min}_{\bw\in\cU_h(\mesh)} \|b(\bw,\cdot)-L\|_{\cV_h(\mesh)^*},
\end{align*}
leading to the claimed error estimate. For details we refer to
\cite[Propositions~2.2,~2.3]{BroersenS_14_RPG}, see also
\cite[Theorem~2.1]{GopalakrishnanQ_14_APD}.
\end{proof}

\section{Applications to the Kirchhoff--Love model} \label{sec_KL}

The Kirchhoff--Love model of plate bending reads
\begin{subequations}\label{KLove}
\begin{align}
  \div\Div \bM = f,\quad \bM = \cC D^2 u \quad &\text{in }\Omega, \label{KL1}\\
  u=\partial_n u =0 \quad &\text{on }\Gamma:=\partial\Omega. \label{KL2}
\end{align}
\end{subequations}
Here, $u:\;\Omega\to\R$ is the deflection of a plate with mid-surface $\Omega\subset\R^2$,
$\bM:\;\Omega\to \SS$ is the tensor of bending moments, and
$f:\;\Omega\to\R$ represents the external vertical load. Tensor $\cC$ is symmetric,
positive definite and represents the rigidity of the plate.
Expressions $\Div\bM$, $D^2 u$, and $\partial_n u$ denote the row-wise divergence of $\bM$,
the Hessian of $u$, and the exterior normal derivative of $u$, respectively.
In principle $\Omega$ can be a bounded, simply connected Lipschitz domain.
For ease of discrete analysis we assume that it is a Lipschitz polygon.

\subsection{Notation of spaces, operators and mesh data} \label{sec_notation}

Given a subdomain $\omega$ of $\Rt$ or a line segment,
we use the canonical Lebesgue and Sobolev spaces $L_2(\omega)$ and $H^s(\omega)$
($0<s\le 2$) of scalar functions,
and need the corresponding spaces $L_2(\omega;U)$, $H^s(\omega;U)$ of functions
with values in $U\in\{\R,\Rt,\SS\}$. The $L_2(\omega;U)$ duality and norm
will be generically denoted by $\vdual{\cdot}{\cdot}_\omega$ and $\|\cdot\|_\omega$.
We use canonical Sobolev norms in $H^s(\omega;U)$, indicated by $\|\cdot\|_{s,\omega}$,
cf.~\cite{Adams},
except for $s=2$ when $\|\cdot\|_{2,\omega}^2:=\|\cdot\|_\omega^2+\|D^2\cdot\|_\omega^2$
with Hessian $D^2$.
We need the space of bending moments
\begin{align*}
   H(\dDiv,\omega;\SS)
   &:= \{\bM\in L_2(\omega;\SS);\; \div\Div\bM\in L_2(\omega;\R)\}
\end{align*}
with (squared) norm
$\|\cdot\|_{\dDiv,\omega}^2:=\|\cdot\|_\omega^2+\|\div\Div\cdot\|_\omega^2$.
Furthermore, $H^1_0(\omega;U)$ and $H^2_0(\omega;U)$ are the respective subspaces with
homogeneous traces on the boundary $\partial\omega$
(of course, including the normal derivative for $H^2$).
We generally drop index $\omega$ when $\omega=\Omega$.

We consider a regular (family of) mesh(es) $\mesh$
consisting of shape-regular triangles $\el$ covering $\Omega$,
$\cup_{\el\in\mesh}\overline{\el}=\overline{\Omega}$.
The set of (open) edges of $\mesh$ is denoted by $\cE$,
$\cE(\Omega):=\{E\in\cE;\; E\subset\Omega\}$, and $\cE(\Gamma):=\{E\in\cE;\; E\subset\Gamma\}$.
The set of edges of $\el\in\mesh$ is $\cE(\el)$. There are corresponding sets
of vertices $\cN$, vertices $\cN(\el)$ of elements $\el\in\mesh$,
vertices $\cN(\Omega)$ interior to $\Omega$, and vertices $\cN(\Gamma)$ on $\Gamma$.
We also need the exterior unit normal and tangential vectors,
defined almost everywhere on $\partial\el$, generically denoted by $\bn$ and $\bt$,
respectively.
Occasionally we use barycentric coordinates to specify basis functions.
For every element $\el\in\mesh$, we generically denote its coordinates by $\lambda_j$ ($j=1,2,3$),
corresponding to vertices $x_j$ and opposite edges $E_j$, numbered modulo $3$. That is, e.g.,
$\{\lambda_j,\lambda_{j+1},\lambda_{j+2}\}=\{\lambda_1,\lambda_2,\lambda_3\}$ for any
integer $j$.

Mesh $\mesh$ induces product spaces denoted as before, but replacing
$\omega$ with $\mesh$, e.g., $H^1(\mesh):=\Pi_{\el\in\mesh} H^1(\el)$.
We will identify elements of product spaces with piecewise defined functions
on $\Omega$, e.g., $v\in H^1(\Omega)$ is identified with $v\in H^1(\mesh)$, and
$v\in H^1(\mesh)$ corresponds to an element $v\in L_2(\Omega)$.
Furthermore, adding index $\mesh$ to a differential operator means that
it is considered to be a piecewise operator, e.g., $\div_\mesh$ is the $\mesh$-piecewise
divergence operator. The $L_2(\mesh)$ duality is denoted by $\vdual{\cdot}{\cdot}_\mesh$.

Throughout this section we select, consistent with the notation just introduced,
\[
   U=\SS,\quad A=D^2,\quad A^*=\div\Div,\quad
   \|\cdot\|_{2,\mesh} = \|\cdot\|_{A,\mesh},\quad
   \|\cdot\|_{\dDiv,\mesh} = \|\cdot\|_{A^*,\mesh}
\]
and refer to the fixed canonical $H^2$-trace operator as
\begin{align} \label{trtwo}
   &\trtwo:=\trAS:\; H(A)=H^2(\Omega) \to H(\dDiv,\mesh;\SS)^*
\end{align}
(recall definition \eqref{trAS} of $\trAS$).
Up to a sign change, $\trtwo$ is the trace operator $\mathrm{tr}^\mathrm{Ggrad}$
from \cite{FuehrerHN_19_UFK}.

For discretizations we need the space $P^s(\el)$ that consists of polynomials
of degree less than or equal to $s\in\N_0$ (the non-negative integers) on $\el\in\mesh$.
According to our notation, $P^s(\mesh)$ is the piecewise polynomial space
(without continuity requirement).
We also need the space $P^s_\mathrm{hom}(\el)$ of homogeneous polynomials of degree $s$.
The $L_2(\Omega)$-projection operator onto $P^s(\mesh)$ is denoted as $\Pi^s_\mesh$.
The space of continuous piecewise polynomials is $P^{s,c}(\mesh):=P^s(\mesh)\cap H^1(\Omega)$.

\subsection{Primal hybrid formulation} \label{sec_KL_p1}

The following formulation gives the variational framework for a discretization that
Brezzi and Fortin call a primal hybrid method,
cf.~\cite[IV.1.3]{BrezziF_91_MHF}.
We select
\begin{align*}
   \sH(A,\mesh):=H(A,\mesh)=H^{2}(\mesh)
\end{align*}
and denote
\[
   \trdDiv:=\trAsS,\quad
   H^{-3/2,-1/2}(\cS) := H(A^*,\cS)=\trdDiv\bigl(H(\dDiv,\Omega;\SS)\bigr),\quad
   \|\cdot\|_{-3/2,-1/2,\cS}:=\|\cdot\|_{A^*,\cS}
\]
(recall definition \eqref{trAsS} of $\trAsS$).
In this case, with $\sH(A,\mesh)=H(A,\mesh)$, $\trAsS=\trdDiv$ is the canonical trace
operator introduced in \cite{FuehrerHN_19_UFK}.

\begin{remark}
It turns out that trace operator $\trdDiv$ gives rise to different components
that are not independent and cannot be localized in general.
For instance, tensors $\bM\in H(\dDiv,\Omega;\SS)$ generally do not satisfy
$\Div\bM\in L_2(\Omega;\Rt)$ and thus do not allow for two independent integrations
by parts for operator $A^*=\div\Div$, cf.~\cite[Remark~3.1]{FuehrerHN_19_UFK}.
For discretization, however, the unknown from $H^{-3/2,-1/2}(\cS)$ will be the trace
of a more regular tensor in which case such a localized decomposition is possible,
and needed to define discrete subspaces of $H^{-3/2,-1/2}(\cS)$.
This will be discussed below, cf.~\eqref{IP},~\eqref{trdd}.
\end{remark}

The generalized primal hybrid formulation \eqref{gp} becomes the
\emph{primal hybrid formulation} of the Kirchhoff--Love model:
\emph{Find $u\in H^{2}(\mesh)$ and $\boldeta\in H^{-3/2,-1/2}(\cS)$ such that}
\begin{subequations} \label{KL_p1}
\begin{alignat}{3}
   &\vdual{\cC D^2 u}{D^2\du}_\mesh + \dual{\boldeta}{\du}_\cS
   &&= \vdual{f}{\du}, \label{KL_p1a}\\
   & \dual{\dbeta}{u}_\cS &&= 0 \label{KL_p1b}
\end{alignat}
\end{subequations}
\emph{holds for any $\du\in H^{2}(\mesh)$ and $\dbeta\in H^{-3/2,-1/2}(\cS)$.}

\begin{theorem} \label{thm_KL_p1}
Let $f\in L_2(\Omega)$ be given.
Problem \eqref{KL_p1} is well posed. Its solution $(u,\boldeta)$ satisfies
\[
   \|u\|_{2,\mesh} + \|\boldeta\|_{-3/2,-1/2,\cS} \le C \|f\|
\]
with a constant $C$ that is independent of $f$ and $\mesh$.
Furthermore, $u\in H^2_0(\Omega)$, $\boldeta=\trdDiv(\cC D^2 u)$,
and $u$, $\bM:=\cC D^2 u$ solve \eqref{KLove}.
\end{theorem}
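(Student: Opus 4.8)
The plan is to recognize \eqref{KL_p1} as precisely the instance of the generalized primal hybrid formulation \eqref{gp} obtained from the choices $U=\SS$, $A=D^2$, $A^*=\div\Div$, $\sH(A,\mesh)=H^2(\mesh)$, $\psi=\boldeta$, and $\trAsS=\trdDiv$ recorded before the statement. Under these identifications the bilinear form $\vdual{\cC A u}{A\du}_\mesh$, the trace pairing $\dual{\psi}{\du}_\cS$, and the constraint $\dual{\dpsi}{u}_\cS=0$ in \eqref{gp} become verbatim the terms of \eqref{KL_p1}. Hence the entire conclusion---well-posedness, the a~priori bound, the memberships $u\in H_0(A)$ and $\psi=\trAsS(\cC Au)$, and the fact that $u$ solves \eqref{prob}---follows directly from Theorem~\ref{thm_gp}, provided its single hypothesis \eqref{PF} is verified in the present setting.

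The only point requiring a genuine argument is therefore the Poincar\'e--Friedrichs inequality \eqref{PF}, which here reads $\|v\|\le\cPF\|D^2 v\|$ for all $v\in H_0(A)=H^2_0(\Omega)$. I would establish it by applying the classical Poincar\'e inequality twice. Since $v\in H^2_0(\Omega)$ satisfies $v=\partial_n v=0$ on $\Gamma$, the tangential derivative of $v$ on $\Gamma$ also vanishes (being the derivative of the vanishing trace $v|_\Gamma$), so the full gradient vanishes on $\Gamma$ and each component of $\nabla v$ lies in $H^1_0(\Omega)$. Poincar\'e applied componentwise gives $\|\nabla v\|\le C\|D^2 v\|$, and Poincar\'e applied to $v\in H^1_0(\Omega)$ gives $\|v\|\le C\|\nabla v\|$; chaining the two yields $\|v\|\le\cPF\|D^2 v\|$ with a constant depending only on $\Omega$. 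This closes the hypothesis of Theorem~\ref{thm_gp}.

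It remains to translate the abstract conclusions into the language of \eqref{KLove}. With the above choices the abstract equation $A^*\cC Au=f$ of \eqref{prob} is exactly $\div\Div(\cC D^2 u)=f$, and $u\in H_0(A)=H^2_0(\Omega)$ encodes the boundary conditions $u=\partial_n u=0$ of \eqref{KL2}. Setting $\bM:=\cC D^2 u$, the constitutive relation $\bM=\cC D^2 u$ and the balance $\div\Div\bM=f$ of \eqref{KL1} hold, while $\boldeta=\trdDiv(\cC D^2 u)$ is the asserted identification of the multiplier. I expect essentially no obstacle here: every step beyond \eqref{PF} is a direct specialization of Theorem~\ref{thm_gp}, so the verification of the Poincar\'e--Friedrichs inequality is the only non-bookkeeping ingredient, and it is classical.
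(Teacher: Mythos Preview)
Your approach is essentially the same as the paper's: recognize \eqref{KL_p1} as an instance of \eqref{gp} and invoke Theorem~\ref{thm_gp} after checking the Poincar\'e--Friedrichs inequality \eqref{PF}. The paper simply cites \eqref{PF} from the literature rather than proving it by chaining two Poincar\'e inequalities as you do, but this is not a substantive difference.

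One small point you gloss over that the paper does flag: the identification $H_0(A)=H^2_0(\Omega)$ is not automatic from the definitions. Recall that $H_0(A)$ is defined abstractly as the kernel of the trace operator $\trA$, i.e., as those $v\in H^2(\Omega)$ for which $\vdual{D^2 v}{w}=\vdual{v}{\div\Div w}$ for every $w\in H(\dDiv,\Omega;\SS)$. That this kernel coincides with the classical space $H^2_0(\Omega)$ (closure of $C_0^\infty(\Omega)$ in $H^2$) requires an argument about the richness of the test space, and the paper cites \cite[Proof of Proposition~3.8(i)]{FuehrerHN_19_UFK} for it. Your proof implicitly assumes this identity when you write ``$v\in H_0(A)=H^2_0(\Omega)$'' and then use the boundary conditions $v=\partial_n v=0$ to run the Poincar\'e argument. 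It is a standard fact, but worth acknowledging.
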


\begin{proof}
We note that $H_0(A)=H^2_0(\Omega)$, cf.~\cite[Proof of Proposition~3.8(i)]{FuehrerHN_19_UFK}.
The Poincar\'e--Friedrichs inequality holds by \cite[Lemma~3.3]{BabuskaP_90_PPH}.
An application of Theorem~\ref{thm_gp} proves the statements.
\end{proof}

\subsubsection*{Discretization}

In order to discretize trace space $H^{-3/2,-1/2}(\cS)$ we need to represent
trace operator $\trdDiv$ explicitly. Integration by parts shows that
$u\in H^2(\mesh)$ and sufficiently smooth tensor functions $\bM\in H(\dDiv,\mesh;\SS)$
satisfy
\begin{align} \label{IP}
   &\vdual{\div\Div\bM}{u}_\mesh - \vdual{\bM}{D^2 u}_\mesh
   = \sum_{\el\in\mesh}
     \dual{\bn\cdot\Div\bM}{u}_{\partial\el} - \dual{\bM\bn}{\grad u}_{\partial\el}
   \nonumber\\
   &= \sum_{\el\in\mesh}
          \Bigl(\dual{\bn\cdot\Div\bM+\partial_t(\bt\cdot\bM\bn)}{u}_{\partial\el}
         - \sum_{x\in\cN(\el)} [\bt\cdot\bM\bn]_{\partial\el}(x) u(x)
         - \dual{\bn\cdot\bM\bn}{\partial_n u}_{\partial\el}\Bigr).
\end{align}
Here, $[\bt\cdot\bM\bn]_{\partial\el}(x)$ denotes the jump at node $x\in\cN(\el)$
of the trace $\bt\cdot\bM\bn|_{\partial\el}$ from within $\el$ (in a certain orientation),
$\partial_n u:=\bn\cdot\grad u$, and
$\partial_t=\bt\cdot\grad$ is the tangential derivative on $\partial\el$ in positive orientation.
For details and a Sobolev space setting we refer to \cite[Section~3]{FuehrerHN_19_UFK}.

Relation \eqref{IP} gives rise to trace operator $\trdDiv$ by selecting tensors without
jumps $\bM\in H(\dDiv,\Omega;\SS)$. Trace $\trdDiv(\bM)$ has the following components:
\begin{subequations} \label{trdd}
\begin{alignat}{3}
   &\bn\cdot\Div\bM+\partial_t(\bt\cdot\bM\bn)|_E,\quad E\in\cE
   && \text{(effective shear-force(s))}, \label{trdd1}\\
   &\bn\cdot\bM\bn|_E,\quad E\in\cE
   && \text{(normal-normal traces)}, \label{trdd2}\\
   &[\bt\cdot\bM\bn]_{\partial\el}(x),\quad x\in\cN(\el),\ \el\in\mesh
   && \text{(corner forces)} \label{trdd3}\\
   \text{subject to}\quad
   &\sum_{\el\in\mesh:\; x\in\cN(\el)} [\bt\cdot\bM\bn]_{\partial\el}(x)=0,\quad
   x\in\cN(\Omega). \label{trdd4}
\end{alignat}
\end{subequations}
The lowest-order non-trivial discretization consists of edge-piecewise constants for
both the effective shear forces and normal-normal traces, and the corner forces
are point values subject to the constraints at interior vertices,
cf.~\cite[(6.5)]{FuehrerHN_19_UFK}. We represent this space
as $P^0(\cS)\times P^0(\cS)\times\R^{3|\mesh|}_\mathrm{constr}$ with
\[
   P^0(\cS) := \{\eta:\;\cS\to \R;\;
                 \eta|_E\ \text{is a univariate constant}\ \forall E\in\cE\},
\]
and where $\R^{3|\mesh|}_\mathrm{constr}\subset\R^{3|\mesh|}$
denotes the subspace of $3|\mesh|$ point values
\eqref{trdd3} subject to the constraints \eqref{trdd4}.
Interpreting trace space $H^{-3/2,-1/2}(\cS)$ as a space with three components
(effective shear forces, normal-normal traces, corner forces),
our discretization spaces are
\begin{align*}
   \sH_h(A,\mesh)&:= P^3(\mesh) \subset H^2(\mesh),\\
   H_h(A^*,\cS)  &:= P^0(\cS)\times P^0(\cS)\times\R^{3|\mesh|}_\mathrm{constr}
   \subset H^{-3/2,-1/2}(\cS).
\end{align*}
They have dimensions $10|\mesh|$ and $2|\cE|+3|\mesh|-|\cN(\Omega)|$, respectively.

The resulting primal hybrid method reads as follows.
\emph{Find $u_h\in P^3(\mesh)$ and
$\boldeta_h\in P^0(\cS)\times P^0(\cS)\times\R^{3|\mesh|}_\mathrm{constr}$ such that}
\begin{subequations} \label{KL_p1_h}
\begin{alignat}{3}
   &\vdual{\cC D^2 u_h}{D^2\du}_\mesh + \dual{\boldeta_h}{\du}_\cS
   &&= \vdual{f}{\du},\\
   & \dual{\dbeta}{u_h}_\cS &&= 0
\end{alignat}
\end{subequations}
\emph{holds for any $\du\in P^3(\mesh)$ and
$\dbeta\in P^0(\cS)\times P^0(\cS)\times\R^{3|\mesh|}_\mathrm{constr}$.}
It converges quasi-optimally.

\begin{theorem} \label{thm_KL_p1_h}
Let $f\in L_2(\Omega)$ be given. System \eqref{KL_p1_h} is well posed.
Its solution $(u_h,\boldeta_h)$ satisfies
\[
   \|u-u_h\|_{2,\mesh} + \|\boldeta-\boldeta_h\|_{-3/2,-1/2,\cS}
   \le C \Bigl( \|u-v\|_{2,\mesh} + \|\boldeta-\bpsi\|_{-3/2,-1/2,\cS}\Bigr)
\]
for any $v\in P^3(\mesh)$ and
$\bpsi\in P^0(\cS)\times P^0(\cS)\times\R^{3|\mesh|}_\mathrm{constr}$.
Here, $(u,\boldeta)$ is the solution of \eqref{KL_p1} and $C>0$ is independent of
$\mesh$, $v$ and $\bpsi$.
\end{theorem}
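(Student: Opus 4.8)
The plan is to obtain Theorem~\ref{thm_KL_p1_h} as a direct application of the abstract quasi-optimality result Theorem~\ref{thm_gph}, under the identifications $A=D^2$, $A^*=\div\Div$, $\sH(A,\mesh)=H^2(\mesh)$, $\sH_h(A,\mesh)=P^3(\mesh)$, and $H_h(A^*,\cS)=P^0(\cS)\times P^0(\cS)\times\R^{3|\mesh|}_\mathrm{constr}$. The continuous hypotheses required by Theorem~\ref{thm_gph} (namely the Poincar\'e--Friedrichs inequality \eqref{PF}) were already established in the proof of Theorem~\ref{thm_KL_p1}. Hence the entire argument reduces to verifying the two discrete ingredients: the existence of a uniformly bounded Fortin operator satisfying \eqref{Fgp}, and the discrete Poincar\'e--Friedrichs inequality \eqref{PFh}.

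For the Fortin operator I would define $\opF\colon H^2(\mesh)\to P^3(\mesh)$ element by element. On each triangle $\el$ I fix $\opF v|_\el\in P^3(\el)$ (of dimension $10$) by prescribing the ten functionals dual to the components of $H_h(A^*,\cS)$ read off from \eqref{IP}--\eqref{trdd}: the three vertex values $v|_\el(x_j)$, the three edge means $\int_{E_j}v$, the three edge means $\int_{E_j}\partial_n v$ of the normal derivative, and one interior degree of freedom (e.g.\ $\int_\el v$) needed to complete the dimension count. Unisolvency of these functionals on $P^3(\el)$ is a finite computation on the reference triangle. With this choice the orthogonality in \eqref{Fgp} is immediate: by \eqref{IP} the edge-constant effective shear forces and normal-normal traces pair with $\int_E\jump{v-\opF v}{E}$ and $\int_E\jump{\partial_n(v-\opF v)}{E}$, which vanish because $\opF$ reproduces the corresponding edge means from either side, while the corner forces pair with $(v-\opF v)|_\el(x_j)=0$.

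The hard part will be the uniform bound $\|\opF v\|_{2,\mesh}\le\cF\|v\|_{2,\mesh}$. Since the norm $\|\cdot\|_{2,\mesh}^2=\|\cdot\|^2+\|D^2\cdot\|_\mesh^2$ has no first-order term, a naive functional-by-functional estimate, bounding the point evaluations through $H^2(\el)\hookrightarrow C^0(\overline\el)$, produces inadmissible negative powers of $h_\el$. I would circumvent this by exploiting that $\opF$ reproduces $P^3(\el)\supset P^1(\el)$: writing $\opF v|_\el=\opF(v-p)|_\el+p$ with $p\in P^1(\el)$ an averaged Taylor polynomial of $v|_\el$ and invoking the Bramble--Hilbert estimates $\|v-p\|_{L_2(\el)}\lesssim h_\el^2\|D^2 v\|_{L_2(\el)}$, $\|\grad(v-p)\|_{L_2(\el)}\lesssim h_\el\|D^2 v\|_{L_2(\el)}$. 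A scaling argument to the reference triangle then bounds the vertex, edge and interior functionals of $v-p$ so that the $h_\el$-powers cancel, giving $\|\opF v\|_{2,\el}\lesssim\|v\|_{2,\el}$ with a constant depending only on shape-regularity; summation over $\el\in\mesh$ yields \eqref{Fgp}.

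Finally, for \eqref{PFh} I observe that any $v\in P^3(\mesh)$ with $\dual{\dpsi}{v}_\cS=0$ for all $\dpsi\in H_h(A^*,\cS)$ is weakly $H^2_0$-conforming: the edge means of the jumps of $v$ and of $\partial_n v$ vanish across interior edges, the edge means vanish on $\Gamma$, and $v$ is single-valued at interior vertices and zero at boundary vertices. For such functions the discrete Poincar\'e--Friedrichs inequality $\|v\|\le\cPFh\|D^2 v\|_\mesh$ follows in the now standard Morley fashion, via an enriching (conforming companion) operator $E_h v\in H^2_0(\Omega)$ with $\|v-E_h v\|\lesssim h^2\|D^2 v\|_\mesh$ and $\|D^2 E_h v\|\lesssim\|D^2 v\|_\mesh$, combined with the continuous inequality \eqref{PF} applied to $E_h v$. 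With \eqref{Fgp} and \eqref{PFh} in hand, Theorem~\ref{thm_gph} delivers both well-posedness of \eqref{KL_p1_h} and the stated quasi-optimal estimate. I expect the uniform boundedness of $\opF$, i.e.\ taming the corner-force point evaluations in the first-order-free norm $\|\cdot\|_{2,\mesh}$, to be the main obstacle, with the polynomial-reproduction device being exactly what makes it work.
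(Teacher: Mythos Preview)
Your overall strategy matches the paper's exactly: invoke Theorem~\ref{thm_gph}, so that only the Fortin property \eqref{Fgp} and the discrete Poincar\'e--Friedrichs inequality \eqref{PFh} require verification. The paper dispatches both by citation, to \cite[Lemma~11]{FuehrerH_19_FDD} for the Fortin operator and to \cite[Theorem~3.1]{WangX_13_MFE} for \eqref{PFh}; your companion-operator route to \eqref{PFh} is a standard and valid alternative.

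There is, however, a genuine gap in your Fortin construction. The ten functionals you propose --- three vertex values, three edge means of $v$, three edge means of $\partial_n v$, and the interior moment $\int_\el v$ --- are \emph{not} unisolvent on $P^3(\el)$. The nonzero cubic
\[
   q := (\lambda_1-\lambda_2)(\lambda_2-\lambda_3)(\lambda_3-\lambda_1)
\]
annihilates all ten: it vanishes at the vertices; on each edge $E_j$ it restricts to a polynomial odd about the midpoint, so $\int_{E_j}q=0$; a short calculation (the three contributions to $\bn\cdot\grad q|_{E_j}$ integrate to a telescoping sum) gives $\int_{E_j}\partial_n q=0$ for every $j$; and $\int_\el q=0$ by antisymmetry under any transposition $\lambda_i\leftrightarrow\lambda_k$. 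In fact $\int_\el q\,p=0$ whenever the weight $p$ is invariant under some transposition of barycentric coordinates, so $p=1$, $p=\lambda_i$, $p=\lambda_i^2$ all fail. Your sentence ``unisolvency is a finite computation on the reference triangle'' hides precisely the computation that breaks.

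The repair is easy once you see the obstruction: replace the interior functional by one that does not vanish on $q$, for instance point evaluation at an interior point off the three medians, or a non-symmetric moment. With such a choice your $P^1$-reproduction and Bramble--Hilbert argument for the uniform bound goes through as written.
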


\begin{proof}
The proof is given in abstract form by Theorem~\ref{thm_gph}, once the conditions
made there are verified. The Poincar\'e--Friedrichs inequality holds for $H^2_0(\Omega)$ so that
we only have to check the existence of a Fortin operator $\opF$ satisfying \eqref{Fgp}
and the validity of \eqref{PFh}.
A Fortin operator is given in \cite[Lemma~11]{FuehrerH_19_FDD}, see $\Pi^\mathrm{Ggrad}$ there,
and \eqref{PFh} follows from \cite[Theorem~3.1]{WangX_13_MFE}.
\end{proof}

\begin{remark}
We note that our hybrid framework allows for the trivial approximation of
effective shear force traces, that is, selecting
$H_h(A^*,\cS):=\{0\}\times P^0(\cS)\times\R^{3|\mesh|}_\mathrm{constr}$.
Then the deflection $u$ can be approximated by piecewise quadratic elements
$u_h\in P^2(\mesh)$ and \eqref{KL_p1_h} turns out to be a hybridization
of the Morley element~\cite{Morley_68_TEE}.
\end{remark}

\subsection{Nodal-continuous primal hybrid formulation} \label{sec_KL_p2}

We present a formulation that gives the variational framework for a Morley-type element,
cf.~\cite{Morley_68_TEE}. We select
\[
   \sH(A,\mesh):=H^{2,\cN}_0(\mesh)
   := \{v\in H^{2}(\mesh);\; v\ \text{is continuous at }x\in\cN(\Omega)\
                                \text{and vanishes at }x\in \cN(\Gamma)\}
\]
and denote
$\trdDivM:=\trAsS$ and
\begin{align} \label{tracenorm_p2}
   &\HtrdDivM(\cS) := H(A^*,\cS)=\trdDivM\bigl(H(\dDiv,\Omega;\SS)\bigr),\quad
   \|\cdot\|_{-3/2,-1/2,J0,\cS}:=\|\cdot\|_{A^*,\cS}.
\end{align}
Index notation ``$J0$'' (``jump zero'') refers to the fact that tangential-normal jumps of bending
moments do not appear, see~\eqref{IP_p2} below.

The generalized primal hybrid formulation \eqref{gp} becomes the
\emph{nodal-continuous primal hybrid formulation} of the Kirchhoff--Love model:
\emph{Find $u\in H^{2,\cN}_0(\mesh)$ and
$\boldeta\in \HtrdDivM(\cS)$ such that}
\begin{subequations} \label{KL_p2}
\begin{alignat}{3}
   &\vdual{\cC D^2 u}{D^2\du}_\mesh + \dual{\boldeta}{\du}_\cS
   &&= \vdual{f}{\du},\\
   & \dual{\dbeta}{u}_\cS &&= 0
\end{alignat}
\end{subequations}
\emph{holds for any $\du\in H^{2,\cN}_0(\mesh)$ and
$\dbeta\in \HtrdDivM(\cS)$.}

\begin{theorem} \label{thm_KL_p2}
Let $f\in L_2(\Omega)$ be given.
Problem \eqref{KL_p2} is well posed. Its solution $(u,\boldeta)$ satisfies
\[
   \|u\|_{2,\mesh} + \|\boldeta\|_{-3/2,-1/2,J0,\cS} \le C \|f\|
\]
with a constant $C$ that is independent of $f$ and $\mesh$.
Furthermore, $u\in H^2_0(\Omega)$, $\boldeta=\trdDivM(\cC D^2 u)$,
and $u$, $\bM:=\cC D^2 u$ solve \eqref{KLove}.
\end{theorem}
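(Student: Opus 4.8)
The plan is to reduce the statement to the abstract Theorem~\ref{thm_gp}, exactly as was done for the full primal hybrid formulation in the proof of Theorem~\ref{thm_KL_p1}. With the identifications $U=\SS$, $A=D^2$, $A^*=\div\Div$ fixed throughout this section, the two hypotheses needed to invoke Theorem~\ref{thm_gp} are that $\sH(A,\mesh)$ is a valid (closed) intermediate space and that the Poincar\'e--Friedrichs inequality \eqref{PF} holds on $H_0(A)$.

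First I would recall that $H_0(A)=H^2_0(\Omega)$ (cf.~\cite[Proof of Proposition~3.8(i)]{FuehrerHN_19_UFK}) and that \eqref{PF} holds on this space (by \cite[Lemma~3.3]{BabuskaP_90_PPH}). Since \eqref{PF} is formulated solely on $H_0(A)$ and is insensitive to the choice of $\sH(A,\mesh)$, it transfers unchanged from Theorem~\ref{thm_KL_p1}. The genuinely new step is to verify that the nodal-continuous space $\sH(A,\mesh)=H^{2,\cN}_0(\mesh)$ is a closed intermediate space, i.e., $H_0(A)\subset\sH(A,\mesh)\subset H(A,\mesh)$ with $\sH(A,\mesh)$ closed in $H^2(\mesh)$. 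The upper inclusion is immediate. For closedness and for the lower inclusion $H^2_0(\Omega)\subset H^{2,\cN}_0(\mesh)$, the key tool is the two-dimensional Sobolev embedding $H^2(\el)\hookrightarrow C^0(\overline{\el})$: it renders the nodal point-evaluations bounded linear functionals on $H^2(\mesh)$, so the finitely many nodal constraints cut out a closed subspace; moreover every $v\in H^2_0(\Omega)$ is globally continuous (hence continuous at each interior vertex) and vanishes on $\Gamma$ (hence at each boundary vertex), so it automatically satisfies those constraints.

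With both hypotheses verified, Theorem~\ref{thm_gp} applies directly and delivers well-posedness, the stability bound with a constant independent of $f$ and $\mesh$, and the identifications $u\in H^2_0(\Omega)$, $\boldeta=\trdDivM(\cC D^2 u)$, together with the fact that $u$ and $\bM:=\cC D^2 u$ solve \eqref{KLove}. I do not expect a genuine obstacle: the proof is a near-verbatim copy of the one for Theorem~\ref{thm_KL_p1}, with the single additional---and routine---verification of closedness and of the lower intermediate-space inclusion via the $H^2\hookrightarrow C^0$ embedding. The only conceptual point worth stressing is that shrinking the primal trial space from $H^2(\mesh)$ to the smaller $H^{2,\cN}_0(\mesh)$ (equivalently, imposing additional nodal continuity) does not disturb the abstract hypotheses, since those depend only on $H_0(A)$ and on the inclusion chain, both of which remain intact.
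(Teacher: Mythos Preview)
Your proposal is correct and follows essentially the same route as the paper, which simply states that the proof is identical to that of Theorem~\ref{thm_KL_p1} and invokes Theorem~\ref{thm_gp}. You are in fact slightly more careful than the paper: you explicitly verify that $H^{2,\cN}_0(\mesh)$ is closed and contains $H^2_0(\Omega)$ via the Sobolev embedding $H^2\hookrightarrow C^0$, a point the paper takes for granted.
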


\begin{proof}
The proof is identical to the proof of Theorem~\ref{thm_KL_p1}.
\end{proof}

\subsubsection*{Discretization}

To discretize trace space $\HtrdDivM(\cS)$, we consider $u\in H^{2,\cN}_0(\mesh)$
and proceed as in \eqref{IP} to find that a sufficiently smooth (piecewise polynomial) tensor
$\bM\in H(\dDiv,\Omega;\SS)$ satisfies
\begin{equation} \label{IP_p2}
   \dual{\trdDivM(\bM)}{u}_\cS
   = \sum_{\el\in\mesh}
   \dual{\bn\cdot\Div\bM-\partial_t(\bt\cdot\bM\bn)}{u}_{\partial\el}
   -\dual{\bn\cdot\bM\bn}{\partial_n u}_{\partial\el}
   \quad\forall u\in H^{2,\cN}_0(\mesh).
\end{equation}
Here, we used that
\[
   \sum_{\el\in\mesh:\;x\in\cV(\el)} [\bt\cdot\bM\bn]_{\partial\el}(x)=0
   \quad\forall x\in\cN(\Omega)
\]
in distributional sense (testing with $C^\infty$-functions
with support in a neighborhood of $x$), see \cite[Proposition~3.6]{FuehrerHN_19_UFK}.
We conclude that trace $\trdDivM(\bM)$ has two scalar components living on the skeleton,
the effective shear force $\bn\cdot\Div\bM+\partial_t(\bt\cdot\bM\bn)$ \eqref{trdd1}
and the normal-normal trace $\bn\cdot\bM\bn$ \eqref{trdd2}.
Corner forces $[\bt\cdot\bM\bn]_{\partial\el}(x)$ \eqref{trdd3} do not appear.
We approximate both trace components by edge-piecewise constant functions,
and use nodal-continuous piecewise cubic polynomials to discretize $H^{2,\cN}_0(\mesh)$.
In our abstract notation, the approximation spaces are
\[
   \sH_h(A,\mesh):= \Xptwo(\mesh) :=  P^{3}(\mesh)\cap H^{2,\cN}_0(\mesh),\quad
   H_h(A^*,\cS)  := P^0(\cS)\times P^0(\cS).
\]
The dimensions of $\Xptwo(\mesh)$ and $P^0(\cS)\times P^0(\cS)$ are
$|\cN(\Omega)|+7|\mesh|$ and $2|\cE|$, respectively.

The resulting nodal-continuous primal hybrid method reads as follows.
\emph{Find $u_h\in \Xptwo(\mesh)$ and
$\boldeta_h\in P^0(\cS)\times P^0(\cS)$ such that}
\begin{subequations} \label{KL_p2_h}
\begin{alignat}{3}
   &\vdual{\cC D^2 u_h}{D^2\du}_\mesh + \dual{\boldeta_h}{\du}_\cS
   &&= \vdual{f}{\du},\\
   & \dual{\dbeta}{u_h}_\cS &&= 0
\end{alignat}
\end{subequations}
\emph{holds for any $\du\in \Xptwo(\mesh)$ and
$\dbeta\in P^0(\cS)\times P^0(\cS)$.}

In the numerical section we will refer to the shear-force and normal-normal trace
components of $\boldeta_h$ as $\eta^\mathit{sf}_h\in P^0(\cS)$ and
$\eta^\mathit{nn}_h\in P^0(\cS)$, respectively.

\begin{remark} \label{rem_KL_p2_h}
We note that scheme \eqref{KL_p1_h} can be interpreted as a hybridization of
scheme \eqref{KL_p2_h} or, vice versa, scheme \eqref{KL_p2_h} as a reduction
of \eqref{KL_p1_h} by elimination of vertex discontinuities of $u_h$.
In fact, the degrees stemming from the component
$\R^{3|\mesh|}_\mathrm{constr}$ in \eqref{KL_p1_h} fix the vertex values of $u_h$.
\end{remark}

Scheme \eqref{KL_p2_h} converges quasi-optimally.

\begin{theorem} \label{thm_KL_p2_h}
Let $f\in L_2(\Omega)$ be given. System \eqref{KL_p2_h} is well posed.
Its solution $(u_h,\boldeta_h)$ satisfies
\[
   \|u-u_h\|_{2,\mesh} + \|\boldeta-\boldeta_h\|_{-3/2,-1/2,J0,\cS}
   \le C \Bigl( \|u-v\|_{2,\mesh} + \|\boldeta-\bpsi\|_{-3/2,-1/2,J0,\cS}\Bigr)
\]
for any $v\in \Xptwo(\mesh)$ and $\bpsi\in P^0(\cS)\times P^0(\cS)$.
Here, $(u,\boldeta)$ is the solution of \eqref{KL_p2} and $C>0$ is independent of
$\mesh$, $v$ and $\bpsi$.
\end{theorem}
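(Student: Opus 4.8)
The plan is to reduce Theorem~\ref{thm_KL_p2_h} to the abstract quasi-optimality result, Theorem~\ref{thm_gph}, exactly as was done for Theorem~\ref{thm_KL_p1_h}. The well-posedness of the continuous formulation \eqref{KL_p2} is already supplied by Theorem~\ref{thm_KL_p2}, so the Poincar\'e--Friedrichs inequality \eqref{PF} on $H_0(A)=H^2_0(\Omega)$ is available. What remains is to verify the two discrete hypotheses of Theorem~\ref{thm_gph} with constants independent of $\mesh$: namely, the existence of a Fortin operator $\opF:\;\sH(A,\mesh)\to\sH_h(A,\mesh)$ satisfying \eqref{Fgp}, and the discrete Poincar\'e--Friedrichs inequality \eqref{PFh} on the discretely constrained subspace of $\Xptwo(\mesh)$. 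Here $\sH(A,\mesh)=H^{2,\cN}_0(\mesh)$, $\sH_h(A,\mesh)=\Xptwo(\mesh)=P^3(\mesh)\cap H^{2,\cN}_0(\mesh)$, and $H_h(A^*,\cS)=P^0(\cS)\times P^0(\cS)$.

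First I would address the Fortin operator. The natural approach is to reuse the construction $\Pi^\mathrm{Ggrad}$ from \cite[Lemma~11]{FuehrerH_19_FDD} that already served in Theorem~\ref{thm_KL_p1_h}, but restricted and adapted to the nodal-continuous setting. The orthogonality condition in \eqref{Fgp} now only has to be tested against $\dpsi\in P^0(\cS)\times P^0(\cS)$, i.e.\ the effective shear-force and normal-normal components, since corner forces \eqref{trdd3} have been dropped in this formulation; see \eqref{IP_p2}. Thus the Fortin operator must preserve, for each edge, the edge-averages pairing $v$ against the two piecewise-constant trace components, while mapping into $\Xptwo(\mesh)$ (cubics that are continuous at interior vertices and vanish at boundary vertices). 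Because the previous operator already fixed edge-averaged traces and was $\mesh$-uniformly bounded in $\|\cdot\|_{A,\mesh}=\|\cdot\|_{2,\mesh}$, the key point to check is that its image respects the nodal-continuity constraint, or can be corrected at the vertex degrees of freedom without spoiling the edge-orthogonality or the uniform bound. I would argue this by noting that vertex values are among the degrees of freedom of $\Xptwo(\mesh)$ and that matching the continuous function's vertex values (which are well-defined for $v\in H^{2,\cN}_0(\mesh)$) changes only nodal components, leaving the edge-constant duality pairings intact by the localization in \eqref{IP_p2}.

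For the discrete Poincar\'e--Friedrichs inequality \eqref{PFh}, the required estimate $\|v\|\le\cPFh\|D^2 v\|_\mesh$ must hold for $v\in\Xptwo(\mesh)$ subject to $\dual{\dpsi}{v}_\cS=0$ for all $\dpsi\in P^0(\cS)\times P^0(\cS)$. By \eqref{IP_p2}, these constraints enforce that the edge-averaged traces of $v$ and of $\partial_n v$ vanish across interelement edges and on $\Gamma$; together with the imposed nodal continuity and boundary vanishing of $v$, this is precisely the weak-continuity pattern of the Morley-type nonconforming element. Such a broken Poincar\'e--Friedrichs inequality for piecewise $H^2$ functions with vanishing edge moments of traces is classical; I would invoke \cite[Theorem~3.1]{WangX_13_MFE}, exactly as in Theorem~\ref{thm_KL_p1_h}, after checking that the constraint set here is contained in (or equal to) the one treated there. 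The main obstacle I anticipate is the Fortin step: ensuring that imposing nodal continuity on the cubic image is fully compatible with the uniform $H^2$-stability bound \eqref{Fgp} and does not disturb the edge-moment preservation; once this compatibility is established, the two cited results close the argument and Theorem~\ref{thm_gph} yields the stated quasi-optimal estimate directly.
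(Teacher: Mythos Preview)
Your strategy of invoking Theorem~\ref{thm_gph} directly is sound, and the discrete Poincar\'e--Friedrichs step via \cite[Theorem~3.1]{WangX_13_MFE} is fine. But the paper does not re-verify the abstract hypotheses in this setting at all: it observes (Remark~\ref{rem_KL_p2_h}) that scheme \eqref{KL_p2_h} is obtained from \eqref{KL_p1_h} by eliminating the corner-force multiplier, which enforces exactly the nodal-continuity constraint already built into $\Xptwo(\mesh)$. Hence the two discrete systems are equivalent, and Theorem~\ref{thm_KL_p2_h} follows from Theorem~\ref{thm_KL_p1_h} as a special case without any new Fortin construction.

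Your route would work too, but one step in it is wrong as written. You suggest that if $\Pi^\mathrm{Ggrad}$ fails to deliver nodal continuity, one can correct the vertex values afterwards ``leaving the edge-constant duality pairings intact.'' That is false: changing a vertex value of a piecewise cubic alters the trace on every incident edge and therefore alters the edge-averages of $v$ entering the shear-force pairing in \eqref{IP_p2}. The correct argument is that no correction is needed in the first place. The operator $\Pi^\mathrm{Ggrad}$ from \cite[Lemma~11]{FuehrerH_19_FDD} is Fortin for the \emph{larger} multiplier space $P^0(\cS)\times P^0(\cS)\times\R^{3|\mesh|}_\mathrm{constr}$ of \eqref{KL_p1_h}; the corner-force component forces it to preserve vertex values. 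Consequently, for $v\in H^{2,\cN}_0(\mesh)$ one automatically has $\Pi^\mathrm{Ggrad}v\in\Xptwo(\mesh)$, and the required orthogonality against $P^0(\cS)\times P^0(\cS)$ is inherited. This is precisely the content of the equivalence in Remark~\ref{rem_KL_p2_h}, so your approach and the paper's collapse to the same observation; the paper just states it at the level of schemes rather than Fortin operators.
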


According to Remark~\ref{rem_KL_p2_h}, this theorem follows from Theorem~\ref{thm_KL_p1_h}
as a special case.


\subsection{Continuous primal hybrid formulation} \label{sec_KL_p3}

The next formulation gives rise to a scheme that uses continuous approximations
of deflections, similarly to the Zienkiewicz triangular element~\cite{BazeleyCIZ_65_TEP},
the non-conforming approach in \cite{BabuskaZ_73_NEF},
$C^0$-interior penalty methods \cite{EngelGHLMT_02_CDF,BrennerS_05_CIP},
though without stabilization, and the HHO method in  \cite{DongE_24_CHH}.

We select
\[
   \sH(A,\mesh):=H^{2,1}_0(\mesh):=H^{2}(\mesh)\cap H^1_0(\Omega)
   \subsetneq H(A,\mesh)=H^2(\mesh)
\]
and denote
\begin{align*}
   \trdnn:=\trAsS,\quad
   H^{-1/2}(\cS) := H(A^*,\cS)=\trdnn\bigl(H(\dDiv,\Omega;\SS)\bigr),\quad
   \|\cdot\|_{-1/2,\cS}:=\|\cdot\|_{A^*,\cS}.
\end{align*}
Notation $\trdnn$ is suggested by the fact that the duality with
test functions that are continuous across $\cS$ gives rise to the normal-normal traces
on $\cS$, see~\eqref{IP_p3} below.
For localized traces in this context with Banach spaces see also \cite[\S 2.4]{DongE_24_CHH}.

The generalized primal hybrid formulation \eqref{gp} becomes the
\emph{continuous primal hybrid formulation} of the Kirchhoff--Love model:
\emph{Find $u\in H^{2,1}_0(\mesh)$ and $\eta\in H^{-1/2}(\cS)$ such that}
\begin{subequations} \label{KL_p3}
\begin{alignat}{3}
   &\vdual{\cC D^2 u}{D^2\du}_\mesh + \dual{\eta}{\du}_\cS
   &&= \vdual{f}{\du},\\
   & \dual{\deta}{u}_\cS &&= 0
\end{alignat}
\end{subequations}
\emph{holds for any $\du\in H^{2,1}_0(\mesh)$ and $\deta\in H^{-1/2}(\cS)$.}

\begin{theorem} \label{thm_KL_p3}
Let $f\in L_2(\Omega)$ be given.
Problem \eqref{KL_p3} is well posed. Its solution $(u,\eta)$ satisfies
\[
   \|u\|_{2,\mesh} + \|\eta\|_{-1/2,\cS} \le C \|f\|
\]
with a constant $C$ that is independent of $f$ and $\mesh$.
Furthermore, $u\in H^2_0(\Omega)$, $\eta=\trdnn(\cC D^2 u)$,
and $u$, $\bM:=\cC D^2 u$ solve \eqref{KLove}.
\end{theorem}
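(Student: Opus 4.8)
The plan is to obtain Theorem~\ref{thm_KL_p3} as a direct specialization of the abstract well-posedness result Theorem~\ref{thm_gp}, following the same short route as in the proofs of Theorems~\ref{thm_KL_p1} and~\ref{thm_KL_p2}. With the identifications $A=D^2$, $A^*=\div\Div$, and $U=\SS$ fixed in \S\ref{sec_notation}, together with the present choices $\sH(A,\mesh)=H^{2,1}_0(\mesh)$, $\trAsS=\trdnn$, and $H(A^*,\cS)=H^{-1/2}(\cS)$, formulation \eqref{KL_p3} is \emph{exactly} the generalized primal hybrid formulation \eqref{gp}. Consequently, every assertion in the statement---well-posedness, the stability estimate, the regularity $u\in H^2_0(\Omega)$, the identity $\eta=\trdnn(\cC D^2 u)$, and the fact that $u$ together with $\bM:=\cC D^2 u$ solves \eqref{KLove}---will follow once the single structural hypothesis of Theorem~\ref{thm_gp}, namely the Poincar\'e--Friedrichs inequality \eqref{PF}, is checked in this setting.

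First I would confirm admissibility of the test space for the abstract framework, i.e.\ that $H_0(A)\subset\sH(A,\mesh)\subset H(A,\mesh)$ with $\sH(A,\mesh)$ closed. Here $H_0(A)=H^2_0(\Omega)$ for the operator $A=D^2$, as established in \cite[Proof of Proposition~3.8(i)]{FuehrerHN_19_UFK}; since any $v\in H^2_0(\Omega)$ in particular lies in $H^1_0(\Omega)$, we get $H^2_0(\Omega)\subset H^{2,1}_0(\mesh)\subset H^2(\mesh)=H(A,\mesh)$, and closedness of $H^{2,1}_0(\mesh)$ in $H^2(\mesh)$ is immediate because global $H^1$-continuity and the homogeneous trace on $\Gamma$ carve out a closed subspace. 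I would then invoke the Poincar\'e--Friedrichs inequality $\|v\|\le\cPF\|D^2 v\|$ on $H^2_0(\Omega)$, which holds by \cite[Lemma~3.3]{BabuskaP_90_PPH}---precisely the inequality already used for Theorem~\ref{thm_KL_p1}. With \eqref{PF} verified, Theorem~\ref{thm_gp} applies and yields all the stated conclusions.

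The point where one might anticipate difficulty is that shrinking the test space from $H^2(\mesh)$ (as in \S\ref{sec_KL_p1}) to $H^{2,1}_0(\mesh)$ alters the trace operator $\trAsS=\trdnn$ and the resulting trace space $H^{-1/2}(\cS)$, so that only the normal-normal component survives in the duality on $\cS$. One might worry that this change enters the stability argument. The key observation, however, is that Theorem~\ref{thm_gp} is insensitive to the particular intermediate space $\sH(A,\mesh)$: its only requirement is the Poincar\'e--Friedrichs inequality, which is posed on $H_0(A)=H^2_0(\Omega)$ and hence is the same regardless of the test space used. There is therefore no genuine obstacle, and the proof collapses to the same one-line application as for Theorem~\ref{thm_KL_p1}.
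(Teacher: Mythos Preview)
Your proposal is correct and follows essentially the same approach as the paper: the paper's proof simply states that it is identical to the proof of Theorem~\ref{thm_KL_p1}, which in turn identifies $H_0(A)=H^2_0(\Omega)$ via \cite[Proof of Proposition~3.8(i)]{FuehrerHN_19_UFK}, invokes the Poincar\'e--Friedrichs inequality from \cite[Lemma~3.3]{BabuskaP_90_PPH}, and then applies Theorem~\ref{thm_gp}. Your additional remarks on the admissibility and closedness of $\sH(A,\mesh)=H^{2,1}_0(\mesh)$ are correct and slightly more explicit than the paper, but the argument is the same.
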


\begin{proof}
The proof is identical to the proof of Theorem~\ref{thm_KL_p1}.
\end{proof}

\subsubsection*{Discretization}

In order to discretize trace space $H^{-1/2}(\cS)$ we need an explicit representation
of trace operator $\trdnn$ for sufficiently smooth (piecewise polynomial) tensors
$\bM\in H(\dDiv,\Omega;\SS)$. Proceeding as in \eqref{IP}, we find that such a tensor $\bM$
satisfies
\begin{equation} \label{IP_p3}
   \dual{\trdnn(\bM)}{u}_\cS
   = -\sum_{\el\in\mesh} \dual{\bn\cdot\bM\bn}{\partial_n u}_{\partial\el}
   \quad \forall u\in H^{2,1}_0(\mesh).
\end{equation}
Therefore, $\trdnn(\bM)|_E$ can be identified with $-\bn\cdot\bM\bn|_E$ on edges $E\in\cE$.
We approximate these normal-normal traces by edge-piecewise constant functions,
and use standard continuous, piecewise cubic polynomials, enriched with element-bubble
functions of degree $4$, to discretize $H^{2,1}_0(\mesh)$.
Specifically, we define the following spaces,
\begin{alignat*}{2}
   &P^4_b(\el) := P^4(\el)\cap H^1_0(\el),\quad
   &&\Xpthreeloc(\el) := P^3(\el) + P^4_b(\el)\quad (\el\in\mesh),\\
   &P^{3,c}_0(\mesh) := P^3(\mesh)\cap H^1_0(\Omega),\quad
   &&\Xpthree(\mesh) := P^{3,c}_0(\mesh) + P^4_b(\mesh)
\end{alignat*}
and, in our abstract notation, select the approximation spaces
\[
   \sH_h(A,\mesh):= \Xpthree(\mesh) \subset H^{2,1}_0(\mesh),\quad
   H_h(A^*,\cS)  := P^0(\cS) \subset H^{-1/2}(\cS).
\]
Here, $P^0(\cS)$ is the space of edge-piecewise constant functions on $\cS$ introduced previously.
The dimensions of $\Xpthree(\mesh)$ and $P^0(\cS)$ are $|\cN(\Omega)|+2|\cE(\Omega)|+3|\mesh|$
and $|\cE|$, respectively (see degrees of freedom \eqref{dofloc_p3} and note that
the continuity of $v\in\Xpthree(\mesh)$ requires unique vertex values $v(x)$ and edge moments
$\dual{v}{\phi}_E$).

The resulting continuous primal hybrid method reads as follows.
\emph{Find $u_h\in \Xpthree(\mesh)$ and $\eta_h\in P^0(\cS)$ such that}
\begin{subequations} \label{KL_p3_h}
\begin{alignat}{3}
   &\vdual{\cC D^2 u_h}{D^2\du}_\mesh + \dual{\eta_h}{\du}_\cS
   &&= \vdual{f}{\du},\\
   & \dual{\deta}{u_h}_\cS &&= 0
\end{alignat}
\end{subequations}
\emph{holds for any $\du\in \Xpthree(\mesh)$ and $\deta\in P^0(\cS)$.}
It converges quasi-optimally.

\begin{theorem} \label{thm_KL_p3_h}
Let $f\in L_2(\Omega)$ be given. System \eqref{KL_p3_h} is well posed.
Its solution $(u_h,\eta_h)$ satisfies
\[
   \|u-u_h\|_{2,\mesh} + \|\eta-\eta_h\|_{-1/2,\cS}
   \le C \Bigl( \|u-v\|_{2,\mesh} + \|\eta-\psi\|_{-1/2,\cS}\Bigr)
\]
for any $v\in \Xpthree(\mesh)$ and $\psi\in P^0(\cS)$.
Here, $(u,\eta)$ is the solution of \eqref{KL_p3} and $C>0$ is independent of
$\mesh$, $v$ and $\psi$.
\end{theorem}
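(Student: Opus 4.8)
The plan is to read the statement off the abstract Theorem~\ref{thm_gph}, so that all that remains is to verify its three hypotheses for the present choice $\sH_h(A,\mesh)=\Xpthree(\mesh)$, $H_h(A^*,\cS)=P^0(\cS)$, each with constants independent of $\mesh$. The Poincar\'e--Friedrichs inequality \eqref{PF} required by Theorem~\ref{thm_gp} has already been recorded in the proof of Theorem~\ref{thm_KL_p3}, where it was noted that $H_0(A)=H^2_0(\Omega)$. Thus the real work splits into (i) producing a Fortin operator $\opF\colon H^{2,1}_0(\mesh)\to\Xpthree(\mesh)$ satisfying \eqref{Fgp}, and (ii) establishing the discrete Poincar\'e--Friedrichs inequality \eqref{PFh} on $\Xpthree(\mesh)$.

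First I would make the abstract Fortin condition concrete. By the representation \eqref{IP_p3}, an edge-piecewise constant $\dpsi\in P^0(\cS)$ pairs with $v\in H^{2,1}_0(\mesh)$ only through the zeroth edge moments of the normal-derivative jumps, so the orthogonality in \eqref{Fgp} is equivalent to $\int_E\jump{\partial_n(v-\opF v)}{E}=0$ for every $E\in\cE$ (the one-sided derivative on boundary edges). I would construct $\opF$ in two stages. Let $\Pi_c\colon H^{2,1}_0(\mesh)\to P^{3,c}_0(\mesh)$ be a Scott--Zhang-type quasi-interpolation, which is well defined since $v$ is continuous and is locally bounded in the broken norm, $\|\Pi_c v\|_{2,\mesh}\lesssim\|v\|_{2,\mesh}$. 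Then set $\opF v:=\Pi_c v+\sum_{\el\in\mesh}b_\el$ with $b_\el\in P^4_b(\el)$ chosen to enforce $\int_E\partial_n b_\el=\int_E\partial_n(v-\Pi_c v)$ (from within $\el$) on each $E\in\cE(\el)$. Since $\dim P^4_b(\el)=3$ and the map $b\mapsto(\int_E\partial_n b)_{E\in\cE(\el)}$ is invertible with a shape-regularity-uniform bound, each $b_\el$ exists and is controlled; as $b_\el$ vanishes on $\partial\el$ it disturbs neither the continuity nor the vertex data of $\Pi_c v$, whence $\opF v\in\Xpthree(\mesh)$, and summing the two one-sided conditions on a shared edge gives the jump condition. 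Stability $\|\opF v\|_{2,\mesh}\le\cF\|v\|_{2,\mesh}$ then follows from the stability of $\Pi_c$, trace and inverse estimates bounding $\|b_\el\|_{2,\el}$ by $\|v\|_{2,\omega_\el}$ on the element patch, and finite overlap. (Alternatively, one could adapt the operator $\Pi^{\mathrm{Ggrad}}$ of \cite[Lemma~11]{FuehrerH_19_FDD} used for Theorem~\ref{thm_KL_p1_h}.)

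For \eqref{PFh} I would exploit that every admissible $v\in\Xpthree(\mesh)$ lies in $H^1_0(\Omega)$, so the continuous Poincar\'e inequality yields $\|v\|\le C\|\grad v\|=C\|\grad_\mesh v\|$, and it suffices to prove the broken Friedrichs estimate $\|\grad_\mesh v\|\lesssim\|D^2 v\|_\mesh$. This is where the side condition enters: continuity of $v$ annihilates the tangential jumps of $\grad v$, and the constraint $\int_E\jump{\partial_n v}{E}=0$ annihilates exactly the zeroth edge moments of the normal-derivative jumps, which are the jump contributions appearing in a Brenner-type broken Poincar\'e--Friedrichs estimate for piecewise $H^2$ fields; the remaining higher moments are absorbed into $\|D^2 v\|_\mesh$ by an edge-Poincar\'e inequality combined with trace and polynomial inverse estimates, with a constant uniform in $\mesh$ by shape regularity. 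This inequality may also be quoted in the spirit of \cite[Theorem~3.1]{WangX_13_MFE}, which supplied \eqref{PFh} for the first scheme.

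The main obstacle is the discrete Poincar\'e--Friedrichs inequality: unlike the Fortin operator, whose construction is a routine local bubble correction once \eqref{IP_p3} is in hand, \eqref{PFh} needs a genuine broken-norm argument, and the care lies in confirming that precisely the vanishing \emph{zeroth} edge moments of the normal-derivative jumps suffice to control $\|\grad_\mesh v\|$ uniformly on the cubic-plus-quartic-bubble space $\Xpthree(\mesh)$.
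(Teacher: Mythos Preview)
Your overall strategy matches the paper: invoke Theorem~\ref{thm_gph}, cite \cite[Theorem~3.1]{WangX_13_MFE} for the discrete Poincar\'e--Friedrichs inequality \eqref{PFh}, and build a Fortin operator as a continuous interpolant plus an element-wise quartic bubble correcting the edge means of the normal derivative. The discrete Poincar\'e--Friedrichs sketch is fine and is precisely what the paper quotes; contrary to your closing remark, this is \emph{not} the delicate part.

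The gap is in the Fortin construction. You assume that a Scott--Zhang operator $\Pi_c:H^{2,1}_0(\mesh)\to P^{3,c}_0(\mesh)$ is bounded in the broken norm, $\|\Pi_c v\|_{2,\mesh}\lesssim\|v\|_{2,\mesh}$. Scott--Zhang stability and approximation in $H^2$ require $v\in H^2(\omega_\el)$ on the full patch, whereas $v\in H^{2,1}_0(\mesh)$ is only \emph{element-wise} $H^2$ and merely $H^1$ across interfaces; the estimate $\|\grad(v-\Pi_c v)\|_\el\lesssim h\,\|D^2 v\|_{\omega_\el}$ is therefore unavailable. Without that $h$-gain the subsequent bubble bound fails: scaling gives $\|D^2 b_\el\|_\el\lesssim h^{-1}\max_{E\in\cE(\el)}\bigl|\int_E\partial_n(v-\Pi_c v)\bigr|$, which blows up as $h\to 0$.

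The paper's remedy is to replace $\Pi_c$ by the \emph{element-local} nodal $P^1$ interpolant $\Ipol^1_\mesh$; see \eqref{def_F_p3} and Lemmas~\ref{la_F0_p3},~\ref{la_F_p3}. This operator is well defined on each $H^2(\el)\subset C^0(\bar\el)$, lands in $P^{1,c}_0(\mesh)$ because $v$ is continuous so vertex values are single-valued, and is trivially broken-$H^2$ stable since $D^2\Ipol^1_\mesh v=0$. Crucially, the approximation property $\|v-\Ipol^1_\mesh v\|_\el\lesssim h_\el^2|v|_{2,\el}$ is purely local and supplies exactly the $h$-factor that cancels the $h^{-1}$ from the bubble scaling, yielding $\|D^2\opFpthree v\|_\mesh\lesssim\|D^2 v\|_\mesh$. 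Swapping your $\Pi_c$ for $\Ipol^1_\mesh$ repairs the argument with no other changes.
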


A proof of this theorem is given by Theorem~\ref{thm_gph}.
As before, relation \eqref{PFh} follows from \cite[Theorem~3.1]{WangX_13_MFE} so that we
only have to verify the existence of a Fortin operator, denoted by $\opFpthree$.
This needs some preparation.  For a definition of $\opFpthree$ and the verification
of Fortin properties, see \eqref{def_F_p3} and Lemma~\ref{la_F_p3} below.

\begin{lemma} \label{la_dofloc_p3}
Given $\el\in\mesh$, the local space $\Xpthreeloc(\el)$ has dimension $12$.
A function $v\in \Xpthreeloc(\el)$ has the degrees of freedom
\begin{align} \label{dofloc_p3}
   v(x),\quad \dual{v}{\phi}_E,\quad \dual{\partial_n v}{1}_E
\end{align}
for $x\in\cV(\el)$, $\phi\in P^1(E)$, $E\in\cE(\el)$.
\end{lemma}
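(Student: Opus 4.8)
The plan is to establish that the $12$ functionals in \eqref{dofloc_p3} are unisolvent on $\Xpthreeloc(\el)$. Since a count shows that their number ($3$ vertex values, $2\cdot 3$ edge moments against $P^1(E)$, and $3$ normal moments) already equals $\dim\Xpthreeloc(\el)$, it suffices to prove that the only $v\in\Xpthreeloc(\el)$ annihilated by all of them is $v=0$. First I would settle the dimension. A quartic polynomial vanishes on $\partial\el$ precisely when it is divisible by the cubic bubble $\lambda_1\lambda_2\lambda_3$, so $P^4_b(\el)=\lambda_1\lambda_2\lambda_3\,P^1(\el)$, whence $\dim P^4_b(\el)=3$, while $P^3(\el)\cap P^4_b(\el)=\mathrm{span}\{\lambda_1\lambda_2\lambda_3\}$ is one-dimensional (the only multiple of the bubble that has degree $\le 3$). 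Therefore $\dim\Xpthreeloc(\el)=\dim P^3(\el)+\dim P^4_b(\el)-1=10+3-1=12$, matching the number of degrees of freedom.

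For unisolvence, let $v\in\Xpthreeloc(\el)$ have all functionals in \eqref{dofloc_p3} equal to zero. The first step is to show that $v$ vanishes on $\partial\el$. On any edge $E$ the bubble contribution of $v$, lying in $H^1_0(\el)$, has zero trace, so $v|_E$ is a univariate cubic in $P^3(E)$. The two vertex values of $v$ at the endpoints of $E$, together with the two moments $\dual{v}{\phi}_E$ for $\phi\in P^1(E)$, are four conditions on the four-dimensional space $P^3(E)$; writing $v|_E$ (which vanishes at the endpoints) as the edge bubble times a linear factor and using $L_2(E)$-orthogonality to $P^1(E)$ forces $v|_E=0$. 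As this holds on every edge, $v$ vanishes on all of $\partial\el$, and being of degree $\le 4$ it is divisible by $\lambda_1\lambda_2\lambda_3$; thus $v=\lambda_1\lambda_2\lambda_3\,q$ with $q\in P^1(\el)$.

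It remains to exploit the normal-derivative functionals $\dual{\partial_n v}{1}_E$. On the edge $E_j$ one has $\lambda_j=0$, so in the product rule for $\partial_n(\lambda_1\lambda_2\lambda_3\,q)$ every term retaining a factor $\lambda_j$ drops out and $\partial_n v|_{E_j}=(\partial_n\lambda_j)\,\lambda_{j+1}\lambda_{j+2}\,q|_{E_j}$, where $\partial_n\lambda_j$ is a nonzero constant. Using $q|_{E_j}=q(x_{j+1})\lambda_{j+1}+q(x_{j+2})\lambda_{j+2}$ and the symmetry $\int_{E_j}\lambda_{j+1}^2\lambda_{j+2}\,ds=\int_{E_j}\lambda_{j+1}\lambda_{j+2}^2\,ds$, a direct integration yields $\dual{\partial_n v}{1}_{E_j}=c_j\,(q(x_{j+1})+q(x_{j+2}))$ with $c_j\neq0$. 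The three vanishing conditions therefore read $q(x_{j+1})+q(x_{j+2})=0$ for $j=1,2,3$, a linear system for $(q(x_1),q(x_2),q(x_3))$ whose matrix has zero diagonal and unit off-diagonal entries and determinant $2\neq0$. Hence $q$ vanishes at all vertices, so $q=0$ and $v=0$.

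The dimension counts and the explicit edge integrals are routine; the point needing care is the non-direct structure of the sum $P^3(\el)+P^4_b(\el)$, which forbids splitting the cubic and bubble parts from the outset and makes the reduction to the canonical form $v=\lambda_1\lambda_2\lambda_3\,q$ the right route. I expect the main obstacle to be the ordering of the argument: the final $3\times3$ system only becomes available after the boundary-trace step has established $v\in P^4_b(\el)$, so the vertex and edge-moment functionals must be fully used before the normal moments are invoked.
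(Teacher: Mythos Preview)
Your proof is correct and follows essentially the same route as the paper's: establish $\dim\Xpthreeloc(\el)=12$ via $P^3(\el)\cap P^4_b(\el)=\mathrm{span}\{\lambda_1\lambda_2\lambda_3\}$, use the vertex values and edge moments to force $v|_{\partial\el}=0$ so that $v=\lambda_1\lambda_2\lambda_3\,q$ with $q\in P^1(\el)$, and then extract from the three normal-derivative conditions the cyclic $3\times 3$ system $q(x_{j+1})+q(x_{j+2})=0$ whose only solution is $q=0$. The paper parameterizes $v$ by the basis coefficients $c_j$ of $\lambda_j\lambda_1\lambda_2\lambda_3$ and reads the condition off at the edge midpoints rather than integrating, but the resulting linear system and the logic are the same.
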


\begin{proof}
For $\el\in\mesh$ we use the notation $\lambda_j$ for barycentric coordinates,
vertices $x_j$, and edges $E_j$.  Let us denote $\psi_0:=\lambda_1\lambda_2\lambda_3$,
the lowest-order polynomial bubble function.
Space $P^4_b(\el)$ is spanned by $\{\phi\psi_0;\; \phi\in P^1(\el)\}$ with
basis $\{\psi_j:=\lambda_j\psi_0;\; j=1,2,3\}$ and therefore has dimension $3$.
Since $P^3(\el)\cap P^4_b(\el)$ is generated by $\psi_0$,
we conclude that $\Xpthreeloc(\el)$ has dimension $10+3-1=12$,
equal to the number of claimed degrees of freedom. It is enough to show their injectivity.

Let $v\in\Xpthree(\el)$ be given with vanishing degrees of freedom \eqref{dofloc_p3}.
The vanishing of the vertex values means that
$v|_{E_j}\in (\lambda_{j+1}\lambda_{j+2})|_{E_j}P^1(E_j)$, and
its orthogonality to $P^1(E_j)$ implies that $v|_{E_j}=0$, $j=1,2,3$,
thus $v\in P^4_b(\el)$. Therefore, function $v$ has a representation
$v=\sum_{j=1}^3 c_j\psi_j$. We calculate
\begin{align*}
   \grad v
   = 2\lambda_1\lambda_2\lambda_3 \sum_{j=1}^3 c_j\grad\lambda_j
   + \sum_{j=1}^3 c_j \lambda_j^2
            \Bigl(\lambda_{j+1}\grad\lambda_{j+2}+\lambda_{j+2}\grad\lambda_{j+1}\Bigr),
\end{align*}
that is, on edge $E_k$,
\begin{align*}
   \grad v   = c_{k+1}\lambda_{k+1}^2\lambda_{k+2}\grad\lambda_{k}
             + c_{k+2}\lambda_{k+2}^2\lambda_{k+1}\grad\lambda_{k}
             = \Bigl(c_{k+1}\lambda_{k+1}+c_{k+2}\lambda_{k+2}\Bigr)
               \lambda_{k+1}\lambda_{k+2}\grad \lambda_k. 
\end{align*}
Moments $\dual{\partial_n v}{1}_{E_k}$ vanish iff
$c_{k+1}\lambda_{k+1}+c_{k+2}\lambda_{k+2}$ vanishes at the midpoint of $E_k$,
that is, iff $c_{k}+c_{k+1}=0$, $k=1,2,3$, with only solution $c_1=c_2=c_3=0$, $v=0$.
\end{proof}

We prove the existence of a Fortin operator
$\opFpthree:\;H^{2,1}_0(\mesh)\to \Xpthree(\mesh)$ in two steps.

\begin{lemma} \label{la_F0_p3}
There is an operator $\opFpthreea:\;H^{2,1}_0(\mesh)\to P^4_b(\mesh)$ that satisfies
\be \label{Fa1_p3}
   \dual{\deta}{\opFpthreea u}_\cS
   = \dual{\deta}{u}_\cS\quad\forall \deta\in P^0(\cS),\ u\in H^{2,1}_0(\mesh)
\ee
and
\be \label{Fa2_p3}
   \|\opFpthreea u\| + \|h_\mesh^2 D^2\opFpthreea u\|_\mesh \lesssim \|u\| + \|h_\mesh^2 D^2 u\|_\mesh
   \quad\forall u\in H^{2,1}_0(\mesh).
\ee
\end{lemma}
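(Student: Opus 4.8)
The plan is to define $\opFpthreea$ element by element through the three edge moments of the normal derivative, which are precisely the degrees of freedom that remain active on the bubble space $P^4_b(\el)$. Concretely, for $u\in H^{2,1}_0(\mesh)$ and $\el\in\mesh$ I would take $(\opFpthreea u)|_\el\in P^4_b(\el)$ to be the unique function satisfying
\[
   \dual{\partial_n(\opFpthreea u)|_\el}{1}_E = \dual{\partial_n u|_\el}{1}_E\quad\forall E\in\cE(\el).
\]
This is well defined: in the proof of Lemma~\ref{la_dofloc_p3} any $v\in P^4_b(\el)$ vanishes on $\partial\el$ together with all vertex values and edge moments, so the three functionals $v\mapsto\dual{\partial_n v}{1}_E$ ($E\in\cE(\el)$) are injective on the three-dimensional space $P^4_b(\el)$ and hence form a basis of its dual. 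The moments $\dual{\partial_n u|_\el}{1}_E$ are finite for $u|_\el\in H^2(\el)$ by the trace theorem, so $\opFpthreea$ is a well-defined linear map into $P^4_b(\mesh)\subset H^{2,1}_0(\mesh)$.

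For \eqref{Fa1_p3} I would expand both dualities through the explicit trace representation \eqref{IP_p3}. For $\deta\in P^0(\cS)$ write $\deta|_E=c_E$, corresponding to an edge-constant normal-normal trace $-\bn\cdot\bM\bn|_E=c_E$; then \eqref{IP_p3} gives, for any $w\in H^{2,1}_0(\mesh)$,
\[
   \dual{\deta}{w}_\cS=\sum_{\el\in\mesh}\sum_{E\in\cE(\el)}c_E\,\dual{\partial_n w|_\el}{1}_E.
\]
Applying this to $w=\opFpthreea u$ and to $w=u$ and invoking the local moment identity term by term shows that the two right-hand sides agree. The argument is purely element-wise, so interior edges (two contributions) and boundary edges (one contribution) are handled automatically and no jump bookkeeping is needed.

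The main work is the stability bound \eqref{Fa2_p3}, which I would establish by a scaling argument on the reference element $\elref$. Since the defining functionals $\dual{\partial_n\cdot}{1}_E$ are invariant under the affine scaling $x=h_\el\hat x$ (with $h_\el$ the diameter of $\el$), the operator commutes with the pullback: $(\opFpthreea u)|_\el$ pulls back to the element $\hat v\in P^4_b(\elref)$ fixed by the reference moments $\dual{\hat\partial_n\hat u}{1}_{\hat E}$. Because $P^4_b(\elref)$ is three-dimensional, all norms on it are equivalent, so $\|\hat v\|_{L_2(\elref)}+\|\hat D^2\hat v\|_{L_2(\elref)}\lesssim\max_{\hat E}|\dual{\hat\partial_n\hat u}{1}_{\hat E}|$. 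It then remains to bound each reference moment by $\|\hat u\|_{L_2(\elref)}+\|\hat D^2\hat u\|_{L_2(\elref)}$; the trace theorem gives $|\dual{\hat\partial_n\hat u}{1}_{\hat E}|\lesssim\|\hat u\|_{H^2(\elref)}$, and here lies the expected obstacle, namely that the target norm omits the first-order term $\|\hat\grad\hat u\|$. This gap is closed by the Ehrling-type inequality $\|\hat\grad\hat u\|_{L_2(\elref)}\lesssim\|\hat u\|_{L_2(\elref)}+\|\hat D^2\hat u\|_{L_2(\elref)}$ (itself proved by a standard compactness/Rellich argument), which renders $\|\hat u\|_{L_2(\elref)}+\|\hat D^2\hat u\|_{L_2(\elref)}$ equivalent to the full $H^2(\elref)$-norm. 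Undoing the scaling, observing that $\|\cdot\|$ and $\|h_\mesh^2 D^2\cdot\|_\mesh$ carry the same volume factor as their reference counterparts, and summing the squared local estimates over $\el\in\mesh$ then yields \eqref{Fa2_p3}.
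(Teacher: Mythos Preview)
Your proposal is correct and follows essentially the same route as the paper: define $\opFpthreea$ locally on each $\el$ via the three normal-derivative edge moments on $P^4_b(\el)$, verify \eqref{Fa1_p3} from the trace representation \eqref{IP_p3}, and obtain \eqref{Fa2_p3} by boundedness of the degrees of freedom on a reference element plus scaling. You are in fact more explicit than the paper in two places: you spell out the edge-by-edge verification of \eqref{Fa1_p3}, and you identify and close the gap between the full $H^2(\elref)$-norm (needed for the trace bound) and the target norm $\|\cdot\|+\|D^2\cdot\|$ via the Ehrling-type inequality, whereas the paper simply invokes ``boundedness of the associated degrees of freedom (in $H^2$ on a reference element) and scaling properties''. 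One minor imprecision: the functionals $\dual{\partial_n\cdot}{1}_E$ are not literally invariant under a general affine map (normals and edge lengths transform), only under isotropic dilations; but the standard shape-regularity argument absorbs the resulting constants, so your conclusion stands.
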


\begin{proof}
By Lemma~\ref{la_dofloc_p3}, for every $u\in H^{2,1}_0(\Omega)$ there are unique
polynomials $q_\el\in P^4_b(\el)$ with the respective degrees of freedom
(edge moments of normal derivative) from $u|_\el$, $\el\in\mesh$.
They give rise to a function $q\in H^1_0(\Omega)$ ($q|_\el:=q_\el$ $\forall\el\in\mesh$)
whose normal derivatives on edges have jumps (values on edges $\subset\Gamma$)
with mean-value zero, that is, $\opFpthreea u:=q$ satisfies \eqref{Fa1_p3}.
By the boundedness of the associated degrees of freedom
(in $H^2$ on a reference element) and scaling properties, one establishes \eqref{Fa2_p3}.
\end{proof}

We define
\be \label{def_F_p3}
   \opFpthree:\; H^{2,1}_0(\mesh)\ni u\mapsto \opFpthreea\bigl(u-\Ipol^1_\mesh u) + \Ipol^1_\mesh u
\ee
where $\Ipol^1_\mesh:\; H^{2}(\mesh)\to P^1(\mesh)$ is the element-piecewise nodal
interpolation operator.
Of course, it maps $H^{2,1}_0(\mesh)\to P^{1,c}_0(\mesh)$.

\begin{lemma} \label{la_F_p3}
Operator $\opFpthree$ defined in \eqref{def_F_p3} is a mapping
$\opFpthree:\;H^{2,1}_0(\mesh)\to \Xpthree(\mesh)$ and satisfies
\be \label{F1_p3}
   \dual{\deta}{\opFpthree u}_\cS
   = \dual{\deta}{u}\quad\forall \deta\in P^0(\cS),\ u\in H^{2,1}_0(\mesh),
\ee
\be \label{F2_p3}
   \|\opFpthree u\| + \|D^2\opFpthree u\|_\mesh \lesssim \|u\| + \|D^2 u\|_\mesh
   \quad\forall u\in H^{2,1}_0(\mesh).
\ee
\end{lemma}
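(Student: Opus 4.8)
The plan is to establish the two Fortin properties \eqref{F1_p3} and \eqref{F2_p3} for $\opFpthree$ by reducing them to the corresponding properties of $\opFpthreea$ from Lemma~\ref{la_F0_p3}, exploiting the crucial observation that the nodal interpolant $\Ipol^1_\mesh u$ lands in the \emph{continuous} space $P^{1,c}_0(\mesh)\subset H^{2,1}_0(\mesh)$ and thus contributes trivially to the skeleton dualities tested against $P^0(\cS)$. First I would verify that $\opFpthree$ maps into the stated range: by construction $\Ipol^1_\mesh u\in P^{1,c}_0(\mesh)\subset\Xpthree(\mesh)$, and $\opFpthreea(u-\Ipol^1_\mesh u)\in P^4_b(\mesh)\subset\Xpthree(\mesh)$ by Lemma~\ref{la_F0_p3}, so the sum lies in $\Xpthree(\mesh)=P^{3,c}_0(\mesh)+P^4_b(\mesh)$. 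Since $u-\Ipol^1_\mesh u\in H^{2,1}_0(\mesh)$ whenever $u\in H^{2,1}_0(\mesh)$, the application of $\opFpthreea$ is legitimate.

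For the orthogonality \eqref{F1_p3}, I would fix $\deta\in P^0(\cS)$ and compute, using linearity of the skeleton duality,
\be
   \dual{\deta}{\opFpthree u}_\cS
   = \dual{\deta}{\opFpthreea(u-\Ipol^1_\mesh u)}_\cS + \dual{\deta}{\Ipol^1_\mesh u}_\cS.
\ee
Property \eqref{Fa1_p3} of Lemma~\ref{la_F0_p3}, applied to the argument $u-\Ipol^1_\mesh u$, rewrites the first term as $\dual{\deta}{u-\Ipol^1_\mesh u}_\cS$. The explicit form \eqref{IP_p3} of $\trdnn$ shows that the skeleton duality against $P^0(\cS)$ only tests normal derivatives $\partial_n\,\cdot$ on edges, and since $\Ipol^1_\mesh u$ is continuous (it lies in $H^1_0(\Omega)$) it produces no effective contribution of the relevant trace component; more precisely, the two terms involving $\Ipol^1_\mesh u$ cancel, yielding $\dual{\deta}{\opFpthree u}_\cS=\dual{\deta}{u}_\cS$. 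The key algebraic point to nail down here is exactly that $\dual{\deta}{\Ipol^1_\mesh u}_\cS$ combines with the piece $-\dual{\deta}{\Ipol^1_\mesh u}_\cS$ coming from $\opFpthreea$'s commuting property to leave only $\dual{\deta}{u}_\cS$.

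The stability bound \eqref{F2_p3} follows by the triangle inequality applied to the two summands of $\opFpthree u$. For the interpolation part I would invoke standard approximation and inverse estimates: on each shape-regular element, $\|\Ipol^1_\mesh u\|$ and $\|D^2\Ipol^1_\mesh u\|$ (the latter vanishing since affine functions have zero Hessian) are controlled by $\|u\|_{2,\el}$, so that $\|\Ipol^1_\mesh u\|+\|D^2\Ipol^1_\mesh u\|_\mesh\lesssim\|u\|+\|D^2u\|_\mesh$. For the bubble part I would apply \eqref{Fa2_p3} to $u-\Ipol^1_\mesh u$ together with inverse inequalities converting the mesh-weighted norm $\|\opFpthreea(\cdot)\|+\|h_\mesh^2D^2\opFpthreea(\cdot)\|_\mesh$ into $\|\cdot\|+\|D^2\cdot\|_\mesh$ on the finite-dimensional local spaces, again bounding $u-\Ipol^1_\mesh u$ by $u$ in the $H^2$-broken norm. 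The main obstacle is the bookkeeping of the mesh-size weights: Lemma~\ref{la_F0_p3} is stated with the scale-invariant weights $h_\mesh^2$ on the Hessian, and I must carefully pass from that weighted estimate to the unweighted \eqref{F2_p3} using an inverse estimate valid on the local polynomial space $\Xpthreeloc(\el)$, ensuring the constant depends only on shape-regularity and not on $\mesh$.
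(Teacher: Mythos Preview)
Your approach is the same as the paper's: split $\opFpthree u$ into the nodal interpolant and the bubble correction, use \eqref{Fa1_p3} for the orthogonality, and \eqref{Fa2_p3} together with properties of $\Ipol^1_\mesh$ for the stability. Two points deserve tightening.

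For \eqref{F1_p3}, your remark that $\Ipol^1_\mesh u\in H^1_0(\Omega)$ ``produces no effective contribution of the relevant trace component'' is misleading: a piecewise linear continuous function generally has nonzero normal-derivative jumps across interior edges, so $\dual{\deta}{\Ipol^1_\mesh u}_\cS$ need not vanish. The correct reason---which you do state at the end---is purely algebraic: once \eqref{Fa1_p3} turns the first term into $\dual{\deta}{u-\Ipol^1_\mesh u}_\cS$, linearity of the duality gives $\dual{\deta}{u-\Ipol^1_\mesh u}_\cS+\dual{\deta}{\Ipol^1_\mesh u}_\cS=\dual{\deta}{u}_\cS$. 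No trace-structure argument is needed.

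For \eqref{F2_p3}, the inverse inequality on $\opFpthreea(u-\Ipol^1_\mesh u)$ combined with \eqref{Fa2_p3} leaves you with a term $h_\el^{-2}\|u-\Ipol^1_\mesh u\|_\el$, which is \emph{not} controlled by merely ``bounding $u-\Ipol^1_\mesh u$ by $u$ in the $H^2$-broken norm''. The missing ingredient is the approximation property $\|u-\Ipol^1_\mesh u\|_\el\lesssim h_\el^2\|D^2 u\|_\el$ of nodal interpolation. The paper uses exactly this: since $D^2\Ipol^1_\mesh u=0$, one has $\|D^2\opFpthree u\|_\mesh=\|D^2\opFpthreea(u-\Ipol^1_\mesh u)\|_\mesh\lesssim \|h_\mesh^{-2}(u-\Ipol^1_\mesh u)\|+\|D^2 u\|_\mesh\lesssim\|D^2 u\|_\mesh$, where the first inequality is a rescaled (element-wise) reading of \eqref{Fa2_p3} and no separate inverse estimate is invoked. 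Your ``main obstacle'' paragraph correctly identifies the weight bookkeeping as the crux; the resolution is this approximation estimate, not an inverse inequality alone.
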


\begin{proof}
Let $u\in H^{2,1}_0(\mesh)$ be given. By construction,
$\opFpthree u\in P^{1,c}_0(\mesh)\oplus P^4_b(\mesh)$, and $\Ipol^1_\mesh u\in P^{1,c}_0(\mesh)$.
Therefore, relation \eqref{Fa1_p3} implies \eqref{F1_p3}:
\[
   \dual{\deta}{\opFpthree u}_\cS
   = \dual{\deta}{\opFpthreea (u-\Ipol^1_\mesh u)}_\cS + \dual{\deta}{\Ipol^1_\mesh u}_\cS
   = \dual{\deta}{u}_\cS\quad\forall \deta\in P^0(\cS).
\]
Property $D^2(\Ipol^1_\mesh u)|_\el=0$ ($\el\in\mesh$), bound \eqref{Fa2_p3}
and the approximation property $\|u-\Ipol^1_\mesh u\|\lesssim \|h_\mesh^2 D^2 u\|_\mesh$
show that $\opFpthree u$ satisfies
\begin{align*}
   \|D^2\opFpthree u\|_\mesh = \|D^2\opFpthreea(u-\Ipol^1_\mesh u)\|
   \lesssim \|h_\mesh^{-2} (u-\Ipol^1_\mesh u)\| + \|D^2 (u-\Ipol^1_\mesh u)\|_\mesh
   \lesssim \|D^2 u\|_\mesh.
\end{align*}
The $L_2$-estimate in \eqref{F2_p3} follows with \eqref{Fa2_p3} and scaling arguments to bound
$\|\Ipol^1_\mesh u\|\lesssim \|u\| + \|h_\mesh^2 D^2 u\|_\mesh$:
\begin{align*}
   \|\opFpthree u\| \le \|\opFpthreea(u-\Ipol^1_\mesh u)\| + \|\Ipol^1_\mesh u\|
   \lesssim \|u-\Ipol^1_\mesh u\| + \|h_\mesh^2 D^2 u\| + \|\Ipol^1_\mesh u\|
   \lesssim \|u\| + \|D^2 u\|_\mesh.
\end{align*}
This finishes the proof.
\end{proof}

\subsection{Mixed hybrid formulation} \label{sec_KL_d1}

The following formulation gives rise to the so-called
assumed stresses hybrid method by Pian and Tong \cite{PianT_69_BFE},
cf.~the analysis by Brezzi and Marini \cite{BrezziM_75_NSP}.
We select
\[
   \sH(A^*,\mesh):=H(A^*,\mesh)=H(\dDiv,\mesh;\SS)
\]
and denote
\begin{align*}
   H_0^{3/2,1/2}(\cS) &:=H_0(A,\cS)= \trtwo\bigl(H^2_0(\Omega)\bigr),\quad
   \|\cdot\|_{3/2,1/2,\cS} := \|\cdot\|_{A,\cS}.
\end{align*}
In this case, with $\sH(A^*,\mesh)=H(A^*,\mesh)$, $\trAS=\trtwo$ defined before in \eqref{trtwo}.

The generalized mixed hybrid formulation \eqref{gm} becomes the
\emph{mixed hybrid formulation} of the Kirchhoff--Love model:
\emph{Find $\bM\in H(\dDiv,\mesh;\SS)$, $u\in L_2(\Omega)$,
and $\bpsi\in H_0^{3/2,1/2}(\cS)$ such that}
\begin{subequations} \label{KL_d1}
\begin{alignat}{3}
   &\vdual{\cCinv\bM}{\dbM} - \vdual{u}{\div\Div\dbM}_\mesh - \dual{\bpsi}{\dbM}_\cS
   &&= 0, \label{KL_d1a}\\
   &-\vdual{\div\Div\bM}{\du}_\mesh - \dual{\dbpsi}{\bM}_\cS
   &&= -\vdual{f}{\du} \label{KL_d1b}
\end{alignat}
\end{subequations}
\emph{holds for any $\dbM\in H(\dDiv,\mesh;\SS)$, $\du\in L_2(\Omega)$,
and $\dbpsi\in H_0^{3/2,1/2}(\cS)$.}
It is an extended form of formulation \cite[(1.23)]{BrezziM_75_NSP}
without an extension of right-hand side function $f$ to an element of
$H(\dDiv,\mesh;\SS)$.

\begin{theorem} \label{thm_KL_d1}
Let $f\in L_2(\Omega)$ be given.
Problem \eqref{KL_d1} is well posed. Its solution $(\bM,u,\bpsi)$ satisfies
\[
   \|\bM\|_{\dDiv,\mesh} + \|u\| + \|\bpsi\|_{3/2,1/2,\cS} \le C \|f\|
\]
with a constant $C$ that is independent of $f$ and $\mesh$.
Furthermore, $u\in H^2_0(\Omega)$, $\bM=\cC D^2 u$, $\bpsi=\trtwo(u)$,
and $u$, $\bM$ solve \eqref{KLove}.
\end{theorem}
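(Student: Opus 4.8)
The plan is to recognize formulation~\eqref{KL_d1} as the concrete instance of the abstract generalized mixed hybrid formulation~\eqref{gm} and then to invoke Theorem~\ref{thm_gm}, exactly as the primal hybrid Theorem~\ref{thm_KL_p1} was reduced to Theorem~\ref{thm_gp}. First I would fix the identifications already set in Subsection~\ref{sec_notation}: $U=\SS$, $A=D^2$, $A^*=\div\Div$, together with the maximal choice $\sH(A^*,\mesh)=H(A^*,\mesh)=H(\dDiv,\mesh;\SS)$. Under these choices the trace operator $\trAS$ coincides with the canonical $H^2$-trace operator $\trtwo$ from~\eqref{trtwo}, and the trace space $H_0(A,\cS)=\trtwo\bigl(H^2_0(\Omega)\bigr)=H_0^{3/2,1/2}(\cS)$. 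A direct comparison then shows that the mass-type form and the two trace dualities appearing in~\eqref{KL_d1a}--\eqref{KL_d1b} are literally those of~\eqref{gma}--\eqref{gmb} under the correspondence $w=\bM$, $\phi=\bpsi$, with $\cC$ the rigidity tensor.

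To apply Theorem~\ref{thm_gm} I then have to verify the standing assumptions~\eqref{ass}. As in the proof of Theorem~\ref{thm_KL_p1}, I would use that $H_0(A)=H^2_0(\Omega)$, cf.~\cite[Proof of Proposition~3.8(i)]{FuehrerHN_19_UFK}, so that the Poincar\'e--Friedrichs inequality~\eqref{PF} holds by~\cite[Lemma~3.3]{BabuskaP_90_PPH}. The inf-sup condition~\eqref{infsup} then follows at once from the remark made directly after~\eqref{ass}, namely that for the homogeneous Dirichlet boundary condition relations~\eqref{PF} and~\eqref{infsup} are equivalent; no separate argument is needed.

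With~\eqref{ass} in hand, Theorem~\ref{thm_gm} yields the well-posedness of~\eqref{KL_d1}, the stability bound $\|\bM\|_{\dDiv,\mesh}+\|u\|+\|\bpsi\|_{3/2,1/2,\cS}\le C\|f\|$ with a mesh-independent constant, and all the structural conclusions: $u\in H_0(A)=H^2_0(\Omega)$, $\bM=\cC Au=\cC D^2u\in H(A^*)=H(\dDiv,\Omega;\SS)$, $\bpsi=\trAS(u)=\trtwo(u)$, and that $u$ solves the abstract problem~\eqref{prob}, which under the present identifications is precisely~\eqref{KLove}.

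I do not expect a genuine obstacle here, since all the analytic content is absorbed into the abstract Theorem~\ref{thm_gm}: the proof is essentially identical to that of Theorem~\ref{thm_KL_p1}, merely calling on the mixed rather than the primal abstract result. The only point demanding a little care is the bookkeeping of the first step---checking that the two trace dualities and the mass-type form in~\eqref{KL_d1} match~\eqref{gm} term by term under the chosen identifications, and in particular that the maximal choice $\sH(A^*,\mesh)=H(\dDiv,\mesh;\SS)$ is what makes $\trAS=\trtwo$ the canonical $H^2$-trace rather than a genuinely relaxed operator.
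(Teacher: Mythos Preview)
Your proposal is correct and follows essentially the same approach as the paper: identify \eqref{KL_d1} with the abstract formulation \eqref{gm}, verify assumptions \eqref{ass}, and invoke Theorem~\ref{thm_gm}. The only difference is that for the inf-sup condition~\eqref{infsup} you appeal to the equivalence with \eqref{PF} noted after \eqref{ass}, whereas the paper gives a direct surjectivity argument (for $g\in L_2(\Omega)$, solve $\Delta^2 v_g=g$ in $H^2_0(\Omega)$ and take $\bQ=D^2v_g$); both justifications are valid and amount to the same thing.
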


\begin{proof}
We have already seen that the Poincar\'e--Friedrichs inequality \eqref{PF} holds true.
Inf-sup property \eqref{infsup} reads
\[
   \sup_{\bQ\in H(\dDiv,\Omega;\SS)} \frac{\vdual{\div\Div\bQ}{v}}{\|\bQ\|_{\dDiv}}
   \ge \cis \|v\|\quad \forall v\in L_2(\Omega)
\]
and is satisfied by the surjectivity of $\div\Div:\;H(\dDiv,\Omega;\SS)\to L_2(\Omega)$.
In fact, given $g\in L_2(\Omega)$, there is a (unique) solution
$v_g\in H^2_0(\Omega)$ to $\div\Div D^2 v_g = \Delta^2 v_g=g$ in $\Omega$, and
$\bQ:=D^2 v_g\in H(\dDiv,\Omega;\SS)$ satisfies $\div\Div\bQ=g$.
Theorem~\ref{thm_gm} proves the statements.
\end{proof}

\subsubsection*{Discretization}

In order to discretize trace space $H_0^{3/2,1/2}(\cS)$ we need to represent trace
operator $\trtwo$ explicitly. This is dual to the setting of the primal hybrid formulation,
cf.~\eqref{IP}. By our definition of $\trtwo$ we have a sign change:
\begin{align} \label{IP_d1}
   &\dual{\trtwo(u)}{\bM}_\cS
   = \sum_{\el\in\mesh} \dual{\trtwoloc(u)}{\bM}_{\partial\el}
   := \sum_{\el\in\mesh} \vdual{D^2 u}{\bM}_\el -\vdual{u}{\div\Div\bM}_\el\nonumber\\
   &= \sum_{\el\in\mesh}
          \Bigl(\dual{\bn\cdot\bM\bn}{\partial_n u}_{\partial\el}
                -
                \dual{\bn\cdot\Div\bM+\partial_t(\bt\cdot\bM\bn)}{u}_{\partial\el}
         + \sum_{x\in\cN(\el)} [\bt\cdot\bM\bn]_{\partial\el}(x) u(x)\Bigr)
\end{align}
for $u\in H^2_0(\Omega)$ and a sufficiently piecewise-smooth tensor $\bM\in H(\dDiv,\mesh;\SS)$.
Here, we introduced the local trace operators $\trtwoloc$ for notational convenience.
Trace $\bpsi|_{\partial\el}=\trtwoloc(u)$ consists of the components
$u|_{\partial\el}$ and $\partial_n u|_{\partial\el}$.
We approximate $\bpsi$ by traces of the reduced Hsieh--Clough--Tocher (HCT) composite element
which are edge-piecewise cubic polynomials, cf.~\cite{Ciarlet_78_IEE,FuehrerHN_19_UFK},
\begin{align} \label{HCT}
   \HCT(\el) &:= 
      \{v\in H^2(\el);\; \Delta^2 v + v=0,\
        v|_E\in P^3(E),\ \partial_n v|_E\in P^1(E)\ \forall E\in\cE(\el)\},\\
   \HCT_0^{2}(\cS) &:= \trtwo\Bigl(\HCT(\mesh)\cap H^2_0(\Omega)\Bigr). \nonumber
\end{align}
Bending moments are approximated by a reduction of the $H(\dDiv;\SS)$ element from
\cite{FuehrerH_MKL}. Specifically, for $\el\in\mesh$ we use the Raviart--Thomas spaces
\[
   \RT^s(\el)=\bx P^s_\mathrm{hom}(\el)\oplus P^s(\el;\Rt)
   \quad (s\in\N_0)
\]
where $\bx:\;\R^2\ni(x_1,x_2)\mapsto (x_1,x_2)^\top$,
denote $\sym(\bQ):=(\bQ+\bQ^\top)/2$ and introduce
\begin{align*}
   \XdDiv(\el) &:= \mathrm{sym}\bigl(\RT^0(\el)\otimes\RT^1(\el)\bigr).
\end{align*}
We approximate bending moments by the $\XdDiv(\el)$-element reduced to constant normal-normal
edge traces,
\begin{align*}
   \XdDivnnc(\el) &:= \{\bM\in\XdDiv(\el);\; \bn\cdot\bM\bn|_E\in P^0(E),\ E\in\cE(\el)\}.
\end{align*}
In our abstract notation, the approximation spaces are
\[
   \sH_h(A^*,\mesh):= \XdDivnnc(\mesh),\quad
   H_h(\mesh) := P^1(\mesh),\quad
   H_h(A,\cS) := \HCT^{2}_0(\cS).
\]
Their respective dimensions are $12|\mesh|$, $3|\mesh|$, and $3|\cN(\Omega)|$.

The resulting mixed hybrid scheme reads as follows.
\emph{Find $\bM_h\in \XdDivnnc(\mesh)$, $u_h\in P^1(\mesh)$,
and $\bpsi_h\in \HCT_0^{2}(\cS)$ such that}
\begin{subequations} \label{KL_d1_h}
\begin{alignat}{3}
   &\vdual{\cCinv\bM_h}{\dbM} - \vdual{u_h}{\div\Div\dbM}_\mesh - \dual{\bpsi_h}{\dbM}_\cS
   &&= 0, \\
   &-\vdual{\div\Div\bM_h}{\du}_\mesh - \dual{\dbpsi}{\bM_h}_\cS
   &&= -\vdual{f}{\du}
\end{alignat}
\end{subequations}
\emph{holds for any $\dbM\in \XdDivnnc(\mesh)$, $\du\in P^1(\mesh)$,
and $\dbpsi\in \HCT_0^{2}(\cS)$.}

It converges quasi-optimally.

\begin{theorem} \label{thm_KL_d1_h}
Let $f\in L_2(\Omega)$ be given. System \eqref{KL_d1_h} is well posed.
Its solution $(\bM_h,u_h,\bpsi_h)$ satisfies,
\[
   \|\bM-\bM_h\|_{\dDiv,\mesh} + \|u-u_h\| + \|\bpsi-\bpsi_h\|_{3/2,1/2,\cS}
   \le C \Bigl(\|\bM-\bQ\|_{\dDiv,\mesh} + \|u-v\| + \|\bpsi-\bphi\|_{3/2,1/2,\cS}\Bigr)
\]
for any $\bQ\in \XdDivnnc(\mesh)$, $v\in P^1(\mesh)$, and $\bphi\in \HCT_0^{2}(\cS)$.
Here, $(\bM,u,\bpsi)$ is the solution of \eqref{KL_d1} and $C>0$ is independent of
$\mesh$, $\bQ$, $v$, and $\bphi$.
\end{theorem}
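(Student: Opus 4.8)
The plan is to derive the statement entirely from the abstract Theorem~\ref{thm_gmh}, so the work reduces to verifying its hypotheses for the present choice $\sH_h(A^*,\mesh)=\XdDivnnc(\mesh)$, $H_h(\mesh)=P^1(\mesh)$, $H_h(A,\cS)=\HCT^2_0(\cS)$. The conditions of Theorem~\ref{thm_gm}, i.e.\ \eqref{ass}, have already been established in the proof of Theorem~\ref{thm_KL_d1}: the Poincar\'e--Friedrichs inequality \eqref{PF} holds on $H^2_0(\Omega)$, and the inf-sup condition \eqref{infsup} follows from the surjectivity of $\div\Div:\,H(\dDiv,\Omega;\SS)\to L_2(\Omega)$. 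It remains to check the inclusion behind \eqref{subh} and to exhibit Fortin operators $\opF_1$, $\opF_2$ satisfying \eqref{Fgm} with constants independent of $\mesh$.

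First I would settle the coercivity ingredient. What is actually used by the comment following Theorem~\ref{thm_gmh} is that $A^*_\mesh$ maps the discrete tensor space into $H_h(\mesh)$, namely $\div\Div_\mesh\bigl(\XdDivnnc(\mesh)\bigr)\subset P^1(\mesh)$. This follows from a degree count: every entry of a tensor in $\XdDiv(\el)=\sym\bigl(\RT^0(\el)\otimes\RT^1(\el)\bigr)$ is a polynomial of degree at most $3$, so $\Div$ produces a vector field of degree at most $2$ and $\div\Div$ a scalar of degree at most $1$ on each $\el$. Consequently, for $\bM\in\XdDivnnc(\mesh)$ with $\vdual{\div\Div\bM}{\du}_\mesh=0$ for all $\du\in P^1(\mesh)$ one may test with $\du=\div\Div_\mesh\bM\in P^1(\mesh)$ to obtain $\div\Div_\mesh\bM=0$, which yields the uniform coercivity of $\vdual{\cCinv\cdot}{\cdot}$ on the relevant discrete kernel.

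Next I would construct the two Fortin operators. For $\opF_1$ I would take the canonical $H(\dDiv,\Omega;\SS)$-conforming interpolation associated with the (reduced) element from \cite{FuehrerH_MKL}, with target $\XdDivnnc(\mesh)\cap H(\dDiv,\Omega;\SS)$; its commuting-diagram property $\div\Div\,\opF_1\bM=\Pi^1_\mesh\div\Div\,\bM$ is precisely \eqref{Fgm1} (since $\div\Div\,\opF_1\bM\in P^1(\mesh)$), and graph-norm stability on $\sH(A^*,\mesh)$ via scaling to a reference element gives \eqref{Fgm3}. For $\opF_2$ I would match, on each element, the trace functionals resolved by $\HCT^2_0(\cS)$: by representation \eqref{IP_d1} these are the effective-shear moments tested against the edge traces $v|_E$, the normal-normal moments tested against $\partial_n v|_E$, and the corner contributions. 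Reorganizing the local degrees of freedom of $\XdDivnnc(\el)$ so that $\opF_2\bM$ reproduces these functionals yields \eqref{Fgm2}, and scaling yields \eqref{Fgm4}. With \eqref{Fgm} in hand, Proposition~\ref{prop_split_h} supplies the discrete inf-sup condition, and Theorem~\ref{thm_gmh} delivers well-posedness together with the quasi-optimal estimate.

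The main obstacle will be the construction and, above all, the mesh-uniform stability of $\opF_2$. The reduced element $\XdDivnnc$ carries only a single (constant) normal-normal degree of freedom per edge, whereas the HCT traces test the normal-normal component against the two-dimensional space $\partial_n v|_E\in P^1(E)$ and the effective shear against $v|_E\in P^3(E)$; reconciling these moment conditions with the reduced local degrees of freedom, while keeping the operator local and its $\|\cdot\|_{A^*,\mesh}$-norm bounded independently of $h$, is the delicate point. I expect to rely on the interpolation and commuting properties of the element from \cite{FuehrerH_MKL} together with standard Piola-type scaling, much as the primal-side Fortin operator was obtained from \cite{FuehrerH_19_FDD}. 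A secondary technical hurdle is ensuring that $\opF_1$ is well defined on the full graph-norm space $\sH(A^*,\mesh)=H(\dDiv,\mesh;\SS)$ while remaining globally $H(\dDiv)$-conforming.
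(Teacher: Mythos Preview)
Your overall strategy---reduce to Theorem~\ref{thm_gmh}, verify \eqref{subh}, and construct $\opF_1,\opF_2$---matches the paper exactly. But both Fortin constructions have gaps that the paper fills differently from what you describe.

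For $\opF_1$: your plan to apply the canonical interpolation $\Iddnnc_\mesh$ directly to $\bM\in H(\dDiv,\mesh;\SS)$ fails, and not just as a ``secondary technical hurdle''. The degrees of freedom \eqref{dof_KL3} require point values of $\bt\cdot\bM\bn$ at vertices, so $\Iddnnc_\mesh$ is only defined on $H(\dDiv,\Omega;\SS)\cap H^r(\Omega;\SS)$ with $r>3/2$, cf.~\eqref{Idd_map}. A general $\bM$ in the graph-norm space has no such regularity. The paper's remedy is a regularization step: solve the Dirichlet Poisson problem $\Delta z=\Pi^1_\mesh\div_\mesh\Div_\mesh\bM$ in $H^1_0(\Omega)$, then set $\opF_1\bM:=\Iddnnc_\mesh(z\bI)$. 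Elliptic regularity on the polygon gives $z\in H^r$ with $r>3/2$, so the interpolation applies; the commuting property \eqref{Idd_comm} together with $\div\Div(z\bI)=\Delta z$ yields \eqref{Fgm1}, and \eqref{Idd_bound} plus $\|z\|_r\lesssim\|\div\Div\bM\|_\mesh$ gives \eqref{Fgm3}.

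For $\opF_2$: you correctly identify the apparent mismatch---$\XdDivnnc$ carries only a constant normal-normal DOF per edge, while $\partial_n v|_E\in P^1(E)$ for $v\in\HCT(\el)$---but the resolution is not to ``reorganize'' so as to match the normal-normal moments. The paper instead proves (Lemma~\ref{la_dof_HCT}) that $\HCT(\el)$ is already unisolvent with respect to the nine functionals $v(x)$, $x\in\cN(\el)$, and $\dual{v}{\phi}_E$, $\phi\in P^1(E)$, $E\in\cE(\el)$, which do \emph{not} involve $\partial_n v$ at all. These nine functionals are exactly dual to the nine DOFs \eqref{dof_KL1}, \eqref{dof_KL3} of $\XdDivnnc(\el)$. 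One then defines $\opF_2\bM|_\el=\bQ_\el$ by setting $\bn\cdot\bQ_\el\bn|_{\partial\el}=0$ and requiring $\dual{\trtwoloc(v)}{\bQ_\el}_{\partial\el}=\dual{\trtwoloc(v)}{\bM}_{\partial\el}$ for all $v\in\HCT(\el)$; with the normal-normal trace killed, the pairing \eqref{IP_d1} reduces to exactly the dual pairing between those nine-and-nine DOFs, so $\bQ_\el$ exists uniquely. Scaling gives \eqref{Fgm4}. The point is that the normal-normal contribution is not matched but discarded, and Lemma~\ref{la_dof_HCT} is what makes this legitimate.
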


A proof of this theorem is given in abstract form by Theorem~\ref{thm_gmh}.
By \cite[Proposition 4]{FuehrerH_MKL}, relation \eqref{subh} holds, so that we
only have to check the existence of Fortin operator components $\opF_1$, $\opF_2$
that satisfy \eqref{Fgm}. This is done in the remainder of this section,
see \eqref{opFd1} and Lemma~\ref{la_F_d1} below.

Element $\XdDiv(\el)$ has the following $15$ degrees of freedom, cf.~\eqref{trdd},
\begin{subequations} \label{dof_KL}
\begin{alignat}{3}
   &\dual{\bn\cdot\Div\bM+\partial_t(\bt\cdot\bM\bn)}{\phi}_E,\quad
   && \phi\in P^1(E),\ E\in\cE, \label{dof_KL1}\\
   &\dual{\bn\cdot\bM\bn}{\phi}_E,
   && \phi\in P^1(E),\ E\in\cE, \label{dof_KL2}\\
   & [\bt\cdot\bM\bn]_{\partial\el}(x),
   && x\in\cN(\el) \label{dof_KL3}.
\end{alignat}
\end{subequations}
These degrees, taken for every element $\el\in\mesh$ with unique values
for edges $E\in\cE$ and vertices $x\in\cN$, subject to constraints \eqref{trdd4}
at interior vertices, define an interpolation operator
\begin{align} \label{Idd_map}
   \Idd_\mesh:\;
   H(\dDiv,\Omega;\SS)\cap H^r(\Omega;\SS)\to \XdDiv(\mesh)\cap H(\dDiv,\Omega;\SS)
\end{align}
for $r>3/2$. According to \cite[Proposition~10]{FuehrerH_MKL},
this interpolation operator satisfies
\begin{align}
   &\div\Div\Idd_\mesh = \Pi^1_\mesh\div\Div, \label{Idd_comm}\\
   &\|\Idd_\mesh\bM\| \lesssim \|\bM\|_{r}\quad
   \forall\bM\in  H(\dDiv,\Omega;\SS)\cap H^r(\Omega;\SS)
   \quad(r>3/2). \label{Idd_bound}
\end{align}
We use the reduced element $\XdDivnnc(\el)$ with only constant moments in \eqref{dof_KL2}.
Inspection of the details in \cite{FuehrerH_MKL} reveals that the corresponding
interpolation operator $\Iddnnc_\mesh$ satisfies
properties \eqref{Idd_comm} and \eqref{Idd_bound} as well. In particular, the commutativity
property can be seen by evaluating $\vdual{\div\Div\Iddnnc_\mesh\bM}{v}_\el$
for $v\in P^1(\el)$ and integrating by parts. The normal-normal trace of $\bM$
meets the normal derivative of $v$, an edge-wise constant, and thus gives only rise to the
constant moments in \eqref{dof_KL2}. The fact that $\Iddnnc_\mesh$
maps $H(\dDiv,\Omega;\SS)\cap H^r(\Omega;\SS)$ (for $r>1/2$) to
$\XdDivnnc(\mesh)\cap H(\dDiv,\Omega;\SS)$ holds by \eqref{Idd_map} and the selection
of the degrees of freedom.

Element $\HCT(\el)$ has dimension $9$. The canonical degrees of freedom of $v\in\HCT(\el)$ are
\begin{equation} \label{dof_HCT_can}
   v(x), \grad v(x)\quad (x\in\cN(\el))\quad\text{for}\ \el\in\mesh,
\end{equation}
cf.~\cite{Ciarlet_78_IEE}. They uniquely define a piecewise cubic polynomial
trace $v|_{\partial\el}$ and a piecewise linear normal derivative
$\partial_n v|_{\partial\el}$. In our numerical experiments we do this
calculation ``on the fly''.

For our analysis of Fortin operators we continue to identify different degrees
that are dual to the degrees \eqref{dof_KL1}, \eqref{dof_KL3} of $\XdDivnnc(\el)$.

\begin{lemma} \label{la_dof_HCT}
A function $v\in\HCT(\el)$ with $\el\in\mesh$ is uniquely defined by
\[
   v(x)\quad (x\in\cN(\el))\quad\text{and}\quad
   \dual{\dpsi}{v}_E\quad (\dpsi\in P^1(E),\ E\in\cE(\el)).
\]
\end{lemma}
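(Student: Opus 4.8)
The plan is to exploit that the proposed functionals number $3+3\cdot\dim P^1(E)=3+6=9$, which equals $\dim\HCT(\el)$; hence it suffices to prove injectivity, i.e.\ that a function $v\in\HCT(\el)$ annihilating all the listed functionals must vanish. The argument then proceeds from the one-dimensional edge traces to the full vertex data, after which the unisolvence of the canonical degrees \eqref{dof_HCT_can} closes the proof. Throughout I would only use the defining property $v|_E\in P^3(E)$, the stated identity $\dim\HCT(\el)=9$, and the unisolvence of \eqref{dof_HCT_can}.

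First I would fix an edge $E\in\cE(\el)$ and use $v|_E\in P^3(E)$. Since $v$ vanishes at both endpoints of $E$, the cubic $v|_E$ is divisible by the product of the two barycentric coordinates of $E$, which itself vanishes at both endpoints, leaving a linear factor $r\in P^1(E)$. Testing the condition $\dual{\dpsi}{v}_E=0$ with the admissible choice $\dpsi=r$ then gives $\int_E(\text{weight})\,r^2=0$, where the weight is sign-definite on the open edge; hence $r=0$ and $v|_E=0$. This is exactly the mechanism of Lemma~\ref{la_dofloc_p3}, now applied to moments of $v$ itself rather than of $\partial_n v$. Carrying this out on every edge yields $v|_{\partial\el}=0$.

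The key step is to recover the vertex gradients from the vanishing trace alone, since the proposed functionals never see $\partial_n v$. At each vertex $x\in\cN(\el)$ two edges meet whose tangent directions are linearly independent; because $v$ vanishes identically on both of these edges, the corresponding tangential derivatives of $v$ at $x$ vanish. Two independent tangential derivatives determine the full gradient (well defined for the $C^1$ functions in $\HCT(\el)$), so $\grad v(x)=0$ for every $x\in\cN(\el)$. Together with $v(x)=0$ this makes all canonical degrees \eqref{dof_HCT_can} vanish, and their unisolvence on $\HCT(\el)$ forces $v=0$.

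I expect the main obstacle to be conceptual rather than computational: the new functionals carry only trace information, so one must check that this information suffices to pin down the vertex gradients. The crucial observation is that the two tangential derivatives at each vertex, obtained for free from the vanishing edge cubics, already encode the entire gradient there; once this is noticed, the reduction to the canonical Hermite-type unisolvence is immediate, and no regularity of $\HCT(\el)$ beyond $v|_E\in P^3(E)$ and the two stated facts is required.
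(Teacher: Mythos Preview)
Your proof is correct and follows essentially the same route as the paper: count dimensions, show the edge cubics vanish, then invoke unisolvence of the canonical degrees \eqref{dof_HCT_can}. The only differences are expository: you test directly with the leftover linear factor $r$ (the paper tests with the explicit basis $\{1,\lambda_{j+2}-\lambda_{j+1}\}$), and you spell out why $v|_{\partial\el}=0$ forces $\grad v(x)=0$ at each vertex via the two independent tangential derivatives, whereas the paper simply asserts that the canonical degrees vanish.
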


\begin{proof}
Since the dimension of $\HCT(\el)$ is equal to the number of specified degrees of freedom,
it is enough to show that an element $v\in\HCT(\el)$ with vanishing degrees is zero.
For given $\el\in\mesh$ with vertices
$x_j$ and opposite edges $E_j$, $j=1,2,3$, let $\lambda_j$, $j=1,2,3$,
be the barycentric coordinates.
By definition, $v|_E\in P^3(E)$ for any $E\in\cE(\el)$.
Setting $v(x_j)=0$, $j=1,2,3$,
means that $v|_{E_j}$ is a cubic polynomial that vanishes at the endpoints
$x_{j+1}$, $x_{j+2}$, $j=1,2,3$ (we use a numbering modulo $3$).
Therefore, $v|_{E_j}\in
\bigl(\lambda_{j+1}\lambda_{j+2}\, \mathrm{span}\{1,\lambda_{j+2}-\lambda_{j+1}\}\bigr)|_{E_j}$,
$j=1,2,3$. Orthogonalities $\dual{1}{v}_{E_j}=\dual{\lambda_{j+2}-\lambda_{j+1}}{v}_{E_j}$
imply $v|_{E_j}=0$, $j=1,2,3$. We conclude that the canonical degrees of freedom
\eqref{dof_HCT_can} of $v$ vanish, that is, $v=0$.
\end{proof}

\begin{remark} \label{rem_dof_HCT}
The degrees of freedom given by Lemma~\ref{la_dof_HCT} are intrinsic to $\partial\el$
and therefore uniquely define functions of $\HCT(\el)$ by their traces. However,
these degrees do not imply conformity for $\HCT_0^2(\cS)$ as traces of $H^2(\Omega)$
without the use of basis functions that are conforming with the canonical degrees of
freedom \eqref{dof_HCT_can}. 
\end{remark}

We are in a position to define and analyze a Fortin operator.
Let $\bI\in\SS$ denote the identity tensor.
For given $\bM\in H(\dDiv,\mesh;\SS)$ we define
\begin{subequations} \label{opFd1}
\begin{alignat}{2}
   &\opF_1(\bM):=\Iddnnc_\mesh(z\bI)\quad
   &&\text{with}\quad
   z\in H^1_0(\Omega):\ \Delta z=\Pi^1_\mesh\div_\mesh\Div_\mesh\bM, \label{opFd1a}\\
   &\opF_2(\bM):=\bQ:=(\bQ_\el)_\el
   &&\text{with}\quad
   \bQ_\el\in\XdDiv(\el):\ \bn\cdot\bQ_\el\bn|_{\partial\el}=0, \label{opFd1b}\\
   &&& \dual{\trtwoloc(v)}{\bQ_\el}_{\partial\el} = \dual{\trtwoloc(v)}{\bM|_\el}_{\partial\el}
   \ \forall v\in \HCT(\el),\ \el\in\mesh. \nonumber
\end{alignat}
\end{subequations}

\begin{lemma} \label{la_F_d1}
Operators $\opF_1$, $\opF_2$ satisfy conditions \eqref{Fgm}.
\end{lemma}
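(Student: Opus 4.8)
The plan is to verify the four requirements collected in \eqref{Fgm} for the pair $(\opF_1,\opF_2)$ of \eqref{opFd1}, handling the two operators in turn.

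For $\opF_1$ the commutativity \eqref{Idd_comm} does almost all the work. Writing $\opF_1\bM=\Iddnnc_\mesh(z\bI)$ with $z\in H^1_0(\Omega)$, $\Delta z=\Pi^1_\mesh\div\Div\bM$, I would first note that $\div\Div(z\bI)=\Delta z$, whence $\div\Div\opF_1\bM=\Pi^1_\mesh\div\Div(z\bI)=\Pi^1_\mesh\div\Div\bM$. Then \eqref{Fgm1} is immediate from the definition of the $L_2(\Omega)$-projection $\Pi^1_\mesh$ onto $H_h(\mesh)=P^1(\mesh)$, and the same identity bounds the $\div\Div$ part of the norm in \eqref{Fgm3} by $\|\div\Div\bM\|_\mesh$. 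For the $L_2$ part I would invoke elliptic regularity on the polygon $\Omega$: since $\Delta z\in L_2(\Omega)$ one has $z\in H^r(\Omega)$ for some $r>1/2$ with $\|z\|_r\lesssim\|\Delta z\|=\|\Pi^1_\mesh\div\Div\bM\|$, so that the stability bound \eqref{Idd_bound} for the reduced interpolation operator gives $\|\opF_1\bM\|\lesssim\|z\bI\|_r\lesssim\|\div\Div\bM\|_\mesh$. The constants depend only on $\Omega$ and shape-regularity, hence are mesh-independent, and $\opF_1\bM\in\XdDivnnc(\mesh)\cap H(\dDiv,\Omega;\SS)$ by the mapping property of $\Iddnnc_\mesh$. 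This disposes of \eqref{Fgm1} and \eqref{Fgm3}.

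For $\opF_2$ I would first show that the local problem \eqref{opFd1b} is well posed. The constraint $\bn\cdot\bQ_\el\bn|_{\partial\el}=0$ annihilates the six normal-normal moments \eqref{dof_KL2}, leaving a nine-dimensional subspace of $\XdDiv(\el)$ carried by the effective-shear moments \eqref{dof_KL1} and the three corner forces \eqref{dof_KL3}; as $\dim\HCT(\el)=9$, \eqref{opFd1b} is a square system and injectivity suffices. Inserting $\bn\cdot\bQ_\el\bn=0$ into \eqref{IP_d1}, the pairing $\dual{\trtwoloc(v)}{\bQ_\el}_{\partial\el}$ reduces to the effective shear forces $\bn\cdot\Div\bQ_\el+\partial_t(\bt\cdot\bQ_\el\bn)|_E\in P^1(E)$ tested against $v|_E$ plus the corner forces tested against the vertex values $v(x)$. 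By Lemma~\ref{la_dof_HCT} one may realize the values $v(x)$ and the edge moments $\dual{\dpsi}{v}_E$ ($\dpsi\in P^1(E)$) of $v\in\HCT(\el)$ independently; taking $v(x)=0$ forces the effective-shear moments to vanish, and then arbitrary $v(x)$ forces the corner forces to vanish, so all degrees \eqref{dof_KL} of $\bQ_\el$ vanish and unisolvence yields $\bQ_\el=0$. Property \eqref{Fgm2} then follows at once: any $\dphi\in\HCT_0^2(\cS)$ equals $\trtwo(v)$ with $v\in\HCT(\mesh)\cap H^2_0(\Omega)$, and summing \eqref{opFd1b} over $\el\in\mesh$ using \eqref{IP_d1} gives $\dual{\dphi}{\bM-\opF_2\bM}_\cS=0$; moreover $\bn\cdot\bQ_\el\bn|_{\partial\el}=0\in P^0(E)$, so $\opF_2\bM\in\XdDivnnc(\mesh)$.

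The remaining bound \eqref{Fgm4} is the crux. Reading $\trtwoloc$ through its volume form in \eqref{IP_d1}, the data functional defining $\bQ_\el$ is $\ell_\el(v)=\vdual{D^2 v}{\bM}_\el-\vdual{v}{\div\Div\bM}_\el$, controlled by $\|D^2 v\|_\el\|\bM\|_\el+\|v\|_\el\|\div\Div\bM\|_\el$, i.e.\ exactly by the element contributions to $\|\bM\|_{\dDiv,\el}$. On a reference element $\elref$ the injectivity just proved, together with equivalence of norms on the finite-dimensional spaces $\XdDiv(\elref)$ and $\HCT(\elref)$, gives $\|\widehat\bQ\|_{\dDiv,\elref}\lesssim\|\widehat\bM\|_{\dDiv,\elref}$; transforming back to $\el$ by a Piola-type map and using shape-regularity, I would check that the powers of $h_\el$ on the two sides match, obtaining $\|\bQ_\el\|_\el^2+\|\div\Div\bQ_\el\|_\el^2\lesssim\|\bM\|_\el^2+\|\div\Div\bM\|_\el^2$ with a uniform constant, and summing over $\el\in\mesh$ gives \eqref{Fgm4}. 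The delicate point, where I expect to spend the most care, is precisely this scaling: ensuring that the Piola transform of the $\SS$-valued $H(\dDiv)$ element and the affine pullback of the $\HCT$ test functions scale consistently, so that no spurious negative power of $h_\el$ survives.
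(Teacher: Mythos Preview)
Your treatment of $\opF_1$ and your verification of well-posedness and \eqref{Fgm2} for $\opF_2$ are essentially the paper's argument. Two points deserve correction.

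First, a minor one: for $\opF_1$ you need $z\in H^r(\Omega)$ with $r>3/2$, not $r>1/2$, since the degrees \eqref{dof_KL1} of $\Iddnnc_\mesh$ involve the effective shear force (a first derivative of the trace). The Dirichlet Laplacian on a Lipschitz polygon does deliver $z\in H^{3/2+\varepsilon}(\Omega)$, so this is only a slip, but the bound \eqref{Idd_bound} you quote requires it.

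The real gap is the $\div\Div$ bound in \eqref{Fgm4}. Your scaling argument cannot close: under any affine (or Piola--Kirchhoff) transform one has $\|\bM\|_\el\sim h_\el\|\widehat\bM\|_{\elref}$ while $\|\div\Div\bM\|_\el\sim h_\el^{-1}\|\widehat{\div\Div}\widehat\bM\|_{\elref}$, so the reference-element estimate $\|\widehat{\div\Div}\widehat\bQ\|_{\elref}\lesssim\|\widehat\bM\|_{\elref}+\|\widehat{\div\Div}\widehat\bM\|_{\elref}$ (which is all finite-dimensional equivalence can give) transforms to $\|\div\Div\bQ\|_\el\lesssim h_\el^{-2}\|\bM\|_\el+\|\div\Div\bM\|_\el$. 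The spurious $h_\el^{-2}$ you worry about does survive. The paper instead observes a commutativity property for $\opF_2$ itself: $\div\Div_\mesh\opF_2\bM=\Pi^1_\mesh\div\Div_\mesh\bM$, which yields $\|\div\Div\opF_2\bM\|_\mesh\le\|\div\Div\bM\|_\mesh$ directly, with no scaling. The reason this holds is that any $v\in P^1(\el)$ has trace and normal derivative on $\partial\el$ matching those of some $\tilde v\in\HCT(\el)$ (its vertex values and constant gradient are admissible canonical degrees \eqref{dof_HCT_can}); since $D^2v=0$, integration by parts gives $\vdual{v}{\div\Div\bQ}_\el=-\dual{\trtwoloc(\tilde v)}{\bQ}_{\partial\el}=-\dual{\trtwoloc(\tilde v)}{\bM}_{\partial\el}=\vdual{v}{\div\Div\bM}_\el$. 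Your scaling argument then suffices for the $L_2$ part $\|\bQ\|\lesssim\|\bM\|_{\dDiv,\mesh}$, where the powers of $h_\el$ do match.
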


\begin{proof}
The construction of operator $\opF_1$ stems from \cite[Proof of Theorem~12]{FuehrerH_MKL}.
It is well defined by the properties of $\Iddnnc_\mesh$ because $z\in H^{r}(\Omega)$ with $r>3/2$.
Relation \eqref{Fgm1} follows by using commutativity property \eqref{Idd_comm} for $\Iddnnc_\mesh$
and noting that $\div_\mesh\Div_\mesh(z\bI)=\Delta z$.
Boundedness \eqref{Fgm3} follows by the same relation, together with boundedness
\eqref{Idd_bound} for $\Iddnnc_\mesh$ and stability $\|z\|_{r}\lesssim \|\div\Div\bM\|_\mesh$.

Operator $\opF_2$ is well defined (on every element $\el\in\mesh$).
In fact, the defining right-hand side is a duality pairing on $H^2(\el)\times H(\dDiv,\el;\SS)$,
and the degrees of freedom \eqref{dof_KL1}, \eqref{dof_KL3} of $\XdDivnnc(\el)$
(setting degrees \eqref{dof_KL2} to zero) and those of $HCT(\el)$ are dual to each other,
see representation \eqref{IP_d1} and Lemma~\ref{la_dof_HCT}.
By definition, $\bQ\in H(\dDiv,\mesh;\SS)$.
Given any $v\in \HCT(\mesh)\cap H^2(\Omega)$, $\bQ$ satisfies
\begin{equation*}
   \dual{\trtwo(v)}{\bQ}_\cS = \sum_{\el\in\mesh}\dual{\trtwoloc(v)}{\bQ_\el}_{\partial\el}
   = \sum_{\el\in\mesh}\dual{\trtwoloc(v)}{\bM|_\el}_{\partial\el}
   = \dual{\trtwo(v)}{\bM}_\cS.
\end{equation*}
This proves \eqref{Fgm2}.
It remains to check the boundedness of $\opF_2$.
The bound $\|\bQ\| \lesssim \|\bM\|_{\dDiv,\mesh}$ holds by a finite-dimension argument
on every element and scaling properties, cf.~\cite[Proof of Lemma~16]{FuehrerH_19_FDD}.
The bound $\|\div\Div\bQ\|_\mesh\le \|\div\Div\bM\|_\mesh$ is due to commutativity
property \eqref{Idd_comm}.
\end{proof}

\begin{remark}
We have proved the discrete inf-sup condition \eqref{infsup_h} by constructing
Fortin operators satisfying \eqref{Fgm}. Brezzi and Marini used an
$H(\dDiv,\mesh;\SS)$-extension of right-hand side function $f$, thus avoiding
the field variable $u$. In their case, only an inf-sup condition for
bilinear form $\dual{\dbpsi}{\bM}_\cS$ is required.
Theorem~3.7 from \cite{BrezziM_75_NSP} with
$m=1$, $r=3$, $s=1$ (the respective polynomial degrees of tensors on $\el$,
and the trace and normal derivative variables on edges $E\in\cE(\el)$)
proves the discrete inf-sup condition for the pair
$\XdDiv(\el)\times\trtwo(\HCT(\el))$
by noting that $P^1(\el;\SS)\subset\XdDiv(\el)$, cf.~\cite[Proposition~4]{FuehrerH_MKL}.
Our reduced element $\XdDivnnc(\el)$ does not comprise $P^1(\el;\SS)$ and is thus not covered
by \cite[Theorem~3.7]{BrezziM_75_NSP}.
\end{remark}

\subsection{Normal-normal continuous mixed formulation} \label{sec_KL_d2}

We present a formulation that provides the variational framework for a type of
Hellan--Herrmann--Johnson method that controls bending moments in the product energy
space $H(\dDiv,\mesh;\SS)$ rather than in $L_2(\Omega;\SS)$
augmented with traces, cf.~\cite{Hellan_67_AEP,Herrmann_67_FEB,Johnson_73_CMF}.
For a variational formulation in Banach spaces see \cite[Cap.~10.3]{BoffiBF_13_MFE}.

Recall trace operator $\trtwo$ introduced in \eqref{trtwo}. We select
\begin{align*}
   \sH(A^*,\mesh)
   :=\Hnn(\dDiv,\mesh;\SS)
   &:=\{\bQ\in H(\dDiv,\mesh;\SS);\; \dual{\trtwo(v)}{\bQ}_\cS=0\
         \forall v\in H^2_0(\Omega)\cap H_0^1(\mesh)\}\\
   &\subset H(A^*,\mesh)=H(\dDiv,\mesh;\SS)
\end{align*}
with $H^1_0(\mesh):=\Pi_{\el\in\mesh} H^1_0(\el)$ and denote
\begin{align*}
   \trtone:=\trAS,\quad
   H^{3/2}_0(\cS) := H_0(A,\cS)=\trtone\bigl(H^2_0(\Omega)\bigr),\quad
   \|\cdot\|_{3/2,\cS}:=\|\cdot\|_{A,\cS}.
\end{align*}
Notation $\Hnn(\dDiv,\mesh;\SS)$ indicates that sufficiently smooth elements have
continuous normal-normal traces across $\cS$ whereas notation $\trtone$ refers to
the fact that it is the canonical trace operator from $H^1(\Omega)$ onto $\cS$,
restricted to $H^2(\Omega)$ and with stronger norm. For a non-trivial mesh $\mesh$,
$\Hnn(\dDiv,\mesh;\SS)$ is a strict subspace of $H(\dDiv,\mesh;\SS)$.

The generalized mixed hybrid formulation \eqref{gm} becomes the
\emph{normal-normal continuous mixed formulation} of the Kirchhoff--Love model:
\emph{Find $\bM\in \Hnn(\dDiv,\mesh;\SS)$, $u\in L_2(\Omega)$,
and $\psi\in H_0^{3/2}(\cS)$ such that}
\begin{subequations} \label{KL_d2}
\begin{alignat}{3}
   &\vdual{\cCinv\bM}{\dbM} - \vdual{u}{\div\Div\dbM}_\mesh - \dual{\psi}{\dbM}_\cS
   &&= 0, \label{KL_d2a}\\
   &-\vdual{\div\Div\bM}{\du}_\mesh - \dual{\dpsi}{\bM}_\cS
   &&= -\vdual{f}{\du} \label{KL_d2b}
\end{alignat}
\end{subequations}
\emph{holds for any $\dbM\in \Hnn(\dDiv,\mesh;\SS)$, $\du\in L_2(\Omega)$,
and $\dpsi\in H_0^{3/2}(\cS)$.}

\begin{theorem} \label{thm_KL_d2}
Let $f\in L_2(\Omega)$ be given.
Problem \eqref{KL_d2} is well posed. Its solution $(\bM,u,\psi)$ satisfies
\[
   \|\bM\|_{\dDiv,\mesh} + \|u\| + \|\psi\|_{3/2,\cS} \le C \|f\|
\]
with a constant $C$ that is independent of $f$ and $\mesh$.
Furthermore, $u\in H^2_0(\Omega)$, $\bM=\cC D^2 u\in H(\dDiv,\Omega;\SS)$,
$\psi=\trtone(u)$, and $u$, $\bM$ solve \eqref{KLove}.
\end{theorem}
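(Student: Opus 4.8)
The plan is to derive the statement directly from the abstract Theorem~\ref{thm_gm}, precisely as Theorem~\ref{thm_KL_d1} was derived, because with $A=D^2$ and $A^*=\div\Div$ the abstract hypotheses \eqref{ass} refer only to the full operator and are therefore insensitive to the particular choice of intermediate space $\sH(A^*,\mesh)$. In particular the Poincar\'e--Friedrichs inequality \eqref{PF} for $H^2_0(\Omega)$ and the inf-sup condition \eqref{infsup}, which amounts to the surjectivity of $\div\Div:\;H(\dDiv,\Omega;\SS)\to L_2(\Omega)$ realized through $\bQ:=D^2 v_g$ with $\Delta^2 v_g=g$, $v_g\in H^2_0(\Omega)$, are already established in the proof of Theorem~\ref{thm_KL_d1} and carry over without change.

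The only genuinely new point is that here $\sH(A^*,\mesh)=\Hnn(\dDiv,\mesh;\SS)$ is a strict subspace of the broken space, so I first have to confirm that it is an admissible intermediate space, namely $H(\dDiv,\Omega;\SS)\subset\Hnn(\dDiv,\mesh;\SS)\subset H(\dDiv,\mesh;\SS)$. The upper inclusion holds by definition. For the lower inclusion I would fix $\bQ\in H(\dDiv,\Omega;\SS)$ and test against an arbitrary $v\in H^2_0(\Omega)\cap H^1_0(\mesh)$: since $\div\Div\bQ\in L_2(\Omega)$ the broken duality collapses to the global one, and since $v$ has vanishing $H^2$-boundary traces the global Green identity gives
\[
   \dual{\trtwo(v)}{\bQ}_\cS
   = \vdual{D^2 v}{\bQ} - \vdual{v}{\div\Div\bQ}_\mesh
   = \vdual{D^2 v}{\bQ} - \vdual{v}{\div\Div\bQ} = 0,
\]
so $\bQ$ meets the defining constraint of $\Hnn(\dDiv,\mesh;\SS)$. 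This lower inclusion is not a mere formality: it is exactly what the abstract argument needs in order to lift the inf-sup from $H(A^*)$ to $\sH(A^*,\mesh)$.

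With admissibility secured and \eqref{ass} in hand, Theorem~\ref{thm_gm} applies and yields well-posedness, the a priori bound, and the identification statements; its conclusion $\phi=\trAS(u)$ specializes to $\psi=\trtone(u)$, and $w=\cC Au\in H(A^*)$ to $\bM=\cC D^2 u\in H(\dDiv,\Omega;\SS)$, matching the claim. I do not anticipate a serious obstacle: the single delicate step is the lower inclusion, where one must use the broken trace operator $\trtwo$ and observe that its pairing with a globally $\dDiv$-conforming tensor reduces to the global integration by parts; everything else is inherited from the abstract theory and from the verifications already made for Theorem~\ref{thm_KL_d1}.
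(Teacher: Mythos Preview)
Your proposal is correct and follows exactly the paper's approach: the paper's proof is the single line ``The proof is identical to the proof of Theorem~\ref{thm_KL_d1},'' i.e., invoke Theorem~\ref{thm_gm} with \eqref{PF} and \eqref{infsup} already verified there. Your additional explicit check that $H(\dDiv,\Omega;\SS)\subset\Hnn(\dDiv,\mesh;\SS)$ is a detail the paper leaves implicit (it is needed for $\sH(A^*,\mesh)$ to be an admissible intermediate space), and your argument for it is correct.
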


\begin{proof}
The proof is identical to the proof of Theorem~\ref{thm_KL_d1}.
\end{proof}

\subsubsection*{Discretization}

Relation \eqref{IP_d1} shows that $u\in H^2_0(\Omega)$ and a sufficiently smooth tensor
$\bM\in \Hnn(\dDiv,\mesh;\SS)$ satisfy
\begin{equation} \label{IP_d2}
   \dual{\trtone(u)}{\bM}_\cS
   = \sum_{\el\in\mesh}
          \Bigl(\sum_{x\in\cN(\el)} [\bt\cdot\bM\bn]_{\partial\el}(x) u(x)
              - \dual{\bn\cdot\Div\bM+\partial_t(\bt\cdot\bM\bn)}{u}_{\partial\el}\Bigr).
\end{equation}
As already indicated, we conclude that trace $\psi=\trtone(u)$ reduces to the canonical trace
of $u$ onto $\cS$, measured in a stronger norm than the canonical trace operator acting on
$H^1(\Omega)$.
We approximate $\psi$ by traces of the reduced HCT-element \eqref{HCT},
\[
   \HCT_0^{2,1}(\cS) := \trtone\Bigl(\HCT(\mesh)\cap H^2_0(\Omega)\Bigr),
\]
and use the reduced-element space $\XdDivnnc(\mesh)$ with continuous normal-normal
traces to approximate $\bM$,
\begin{align*}
   \XdDivnn(\mesh) &:= \XdDivnnc(\mesh)\cap \Hnn(\dDiv,\mesh;\SS).
\end{align*}
In our abstract notation, the approximation spaces are
\[
   \sH_h(A^*,\mesh):= \XdDivnn(\mesh),\quad
   H_h(\mesh) := P^1(\mesh),\quad
   H_h(A,\cS) := \HCT_0^{2,1}(\cS).
\]
They have dimensions $|\cE|+9|\mesh|$, $3|\mesh|$ and $3|\cV(\Omega)|$, respectively.

The resulting normal-normal continuous mixed scheme reads as follows.
\emph{Find $\bM_h\in \XdDivnn(\mesh)$, $u_h\in P^1(\mesh)$,
and $\psi_h\in \HCT_0^{2,1}(\cS)$ such that}
\begin{subequations} \label{KL_d2_h}
\begin{alignat}{3}
   &\vdual{\cCinv\bM_h}{\dbM} - \vdual{u_h}{\div\Div\dbM}_\mesh - \dual{\psi_h}{\dbM}_\cS
   &&= 0, \\
   &-\vdual{\div\Div\bM_h}{\du}_\mesh - \dual{\dpsi}{\bM_h}_\cS
   &&= -\vdual{f}{\du}
\end{alignat}
\end{subequations}
\emph{holds for any $\dbM\in \XdDivnn(\mesh)$, $\du\in P^1(\mesh)$,
and $\dpsi\in \HCT_0^{2,1}(\cS)$.}

It converges quasi-optimally.

\begin{theorem} \label{thm_KL_d2_h}
Let $f\in L_2(\Omega)$ be given. System \eqref{KL_d2_h} is well posed.
Its solution $(\bM_h,u_h,\psi_h)$ satisfies,
\[
   \|\bM-\bM_h\|_{\dDiv,\mesh} + \|u-u_h\| + \|\psi-\psi_h\|_{3/2,\cS}
   \le C \Bigl(\|\bM-\bQ\|_{\dDiv,\mesh} + \|u-v\| + \|\psi-\phi\|_{3/2,\cS}\Bigr)
\]
for any $\bQ\in \XdDivnn(\mesh)$, $v\in P^1(\mesh)$, and $\phi\in \HCT_0^{2,1}(\cS)$.
Here, $(\bM,u,\psi)$ is the solution of \eqref{KL_d2} and $C>0$ is independent of
$\mesh$, $\bQ$, $v$, and $\phi$.
\end{theorem}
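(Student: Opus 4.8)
The plan is to apply the abstract convergence result Theorem~\ref{thm_gmh} to the concrete discretization \eqref{KL_d2_h}, exactly as was done for the mixed hybrid scheme \eqref{KL_d1_h} via Theorem~\ref{thm_KL_d1_h}. By Theorem~\ref{thm_gmh} it suffices to verify three ingredients with constants independent of $\mesh$: the subspace relation \eqref{subh}, and the existence of Fortin operator components $\opF_1$, $\opF_2$ satisfying \eqref{Fgm}. The continuous well-posedness (the hypotheses of Theorem~\ref{thm_gm}) is already supplied by Theorem~\ref{thm_KL_d2}. First I would check \eqref{subh}, that is $A^*_\mesh(\sH(A^*,\mesh))=\div_\mesh\Div_\mesh(\Hnn(\dDiv,\mesh;\SS))\subset P^1(\mesh)=H_h(\mesh)$. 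Since $\Hnn(\dDiv,\mesh;\SS)\subset H(\dDiv,\mesh;\SS)$ and the reduced element $\XdDiv(\el)=\sym(\RT^0(\el)\otimes\RT^1(\el))$ has element-piecewise polynomial entries, $\div\Div$ of any tensor in the space lands in $P^1(\mesh)$; this is precisely \cite[Proposition~4]{FuehrerH_MKL} invoked in the $\S$\ref{sec_KL_d1} analysis, and it is unaffected by the normal-normal continuity restriction.

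\emph{For the Fortin operators} I would reuse the construction \eqref{opFd1} from the mixed hybrid setting, adjusting the ranges to respect normal-normal continuity. For $\opF_1$ the same definition $\opF_1(\bM):=\Iddnnc_\mesh(z\bI)$ with $z\in H^1_0(\Omega)$ solving $\Delta z=\Pi^1_\mesh\div_\mesh\Div_\mesh\bM$ works verbatim: since $z\bI$ is a globally $H(\dDiv)$-conforming tensor with continuous normal-normal trace, its interpolant lands in $\XdDivnn(\mesh)\cap H(\dDiv,\Omega;\SS)\subset\sH_h(A^*,\mesh)\cap H(A^*)$, which is the required range \eqref{Fgm} for $\opF_1$. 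Relation \eqref{Fgm1} follows from the commutativity property \eqref{Idd_comm} for $\Iddnnc_\mesh$ together with $\div_\mesh\Div_\mesh(z\bI)=\Delta z$, and the boundedness \eqref{Fgm3} from \eqref{Idd_bound} and the elliptic stability $\|z\|_r\lesssim\|\div\Div\bM\|_\mesh$, exactly as in Lemma~\ref{la_F_d1}.

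\emph{The main obstacle} is the operator $\opF_2$, whose element-local definition \eqref{opFd1b} must now be modified so that its image lies in the \emph{normal-normal continuous} space $\XdDivnn(\mesh)$ rather than merely in $\XdDivnnc(\mesh)$. The defining property in \eqref{opFd1b} already forces $\bn\cdot\bQ_\el\bn|_{\partial\el}=0$, so each local image has \emph{vanishing} normal-normal trace; consequently the global $\bQ$ automatically has continuous (indeed zero) normal-normal traces across $\cS$ and thus $\bQ\in\Hnn(\dDiv,\mesh;\SS)$, which delivers $\bQ\in\XdDivnn(\mesh)$. What requires care is that the testing space in \eqref{Fgm2} is now $H_h(A,\cS)=\HCT_0^{2,1}(\cS)$, whose traces $\trtone(v)$ pair only against the shear-force and corner-force components (see \eqref{IP_d2}) and not the normal-normal component. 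I would therefore show that the degrees of freedom \eqref{dof_KL1}, \eqref{dof_KL3} of the reduced element, together with the $\HCT(\el)$ degrees from Lemma~\ref{la_dof_HCT}, remain dual to each other on each element after dropping the normal-normal pairing; this matching is consistent with representation \eqref{IP_d2}, where the normal-normal term is absent. Once local solvability and duality are established, relation \eqref{Fgm2} follows element-by-element as in Lemma~\ref{la_F_d1}, and the boundedness \eqref{Fgm4}, namely $\|\bQ\|\lesssim\|\bM\|_{\dDiv,\mesh}$ and $\|\div\Div\bQ\|_\mesh\le\|\div\Div\bM\|_\mesh$, follows from a finite-dimensional scaling argument on the reference element and from \eqref{Idd_comm}. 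With \eqref{subh} and all four conditions \eqref{Fgm} verified uniformly in $\mesh$, Theorem~\ref{thm_gmh} yields the asserted quasi-optimal estimate.
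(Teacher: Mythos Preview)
Your proposal is correct and follows essentially the same approach as the paper: reuse the Fortin operators \eqref{opFd1} from \S\ref{sec_KL_d1} and observe that $\opF_1$ already maps into $H(\dDiv,\Omega;\SS)\subset\Hnn(\dDiv,\mesh;\SS)$ while $\opF_2$ lands in $\Hnn(\dDiv,\mesh;\SS)$ because it sets the normal-normal degrees \eqref{dof_KL2} to zero. Your additional remark that the Fortin property \eqref{Fgm2} persists for the reduced trace space $\HCT_0^{2,1}(\cS)$ via the representation \eqref{IP_d2} makes explicit a point the paper leaves implicit; note also that \eqref{subh} concerns the \emph{discrete} space $\sH_h(A^*,\mesh)=\XdDivnn(\mesh)$ (your sentence momentarily writes the continuous space $\Hnn(\dDiv,\mesh;\SS)$, though your subsequent argument correctly invokes the element $\XdDiv(\el)$).
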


The proof of Theorem~\ref{thm_KL_d1_h} applies in this case as well.
In fact, Fortin operator component $\opF_1$ from \eqref{opFd1a} maps to
$H(\dDiv,\Omega;\SS)\subset\Hnn(\dDiv,\mesh;\SS)$ and
component $\opF_2$ from \eqref{opFd1b} maps to
$\Hnn(\dDiv,\mesh;\SS)$ since it sets degrees of freedom \eqref{dof_KL2} to zero.

\subsection{Numerical experiments} \label{sec_KL_num}

We consider a simple example of problem \eqref{KLove}
with domain $\Omega=(0,1)^2$ and polynomial solution
$u(x_1,x_2)=x_1^2(1-x_1)^2x_2^2(1-x_2)^2$. Using the identity tensor for $\cC$,
the bending moments and right-hand side function
are $\bM=D^2 u$ and $f=\div\Div\bM=\Delta^2 u$, respectively.
We use the discretization spaces as specified in the respective sections,
with uniform meshes of size $h:=N^{-1/2}$ where $N:=|\mesh|$.
We present results for the nodal-continuous and continuous primal hybrid methods
\eqref{KL_p2_h}, \eqref{KL_p3_h}, and the mixed hybrid and normal-normal continuous
mixed methods \eqref{KL_d1_h}, \eqref{KL_d2_h}.
In all the cases,
the domain bilinear forms are calculated analytically on a reference element
and Piola--Kirchhoff transformation (with appropriate scalings) onto elements,
cf.~\cite{FuehrerHN_19_UFK,CarstensenH_NNC}.
We use numerical integration for the right-hand side entries $\vdual{f}{\du}_\el$ (7-point Gauss)
and element error calculation (16-point Gauss), cf.~\cite{Dunavant_85_HDS}.
We approximate the skeleton bilinear forms by the 5-point Gauss formula on every edge,
and use central differences for the derivatives
of the effective shear force (normal component of $\Div\bM$ and tangential
derivative of $\bt\cdot\bM\bn$), required for schemes \eqref{KL_d1_h} and \eqref{KL_d2_h}.

All the discretizations aim at lowest-order approximations, and this is confirmed
by our numerical results, with some superconverging components that we do not
analyze here. Figure~\ref{fig_primal_nodal_cont_err} shows the errors
$\|u-u_h\|$ (``u'') and $\|D^2(u-u_h)\|_\mesh$ (``D$^2$u'')
for the nodal-continuous primal hybrid method
\eqref{KL_p2_h} along with curves of orders $O(h)=O(N^{-1/2})$ and $O(h^2)$,
indicating $\|u-u_h\|_{2,\mesh}=O(h)$ and superconvergence $\|u-u_h\|=O(h^2)$.
For illustration we also plot weighted $L_2(\cS)$-errors for the approximations
of the traces of $\bM$ on the skeleton.
As indicated after \eqref{KL_p2_h}, we denote
by $\eta^\mathit{nn}_h$ and $\eta^\mathit{sf}_h$ the components of $\boldeta_h$
that correspond to the normal-normal trace and the effective shear force, respectively.
Curves ``Mnn'' and ``shear'' present the errors
$\|h_\cS^{1/2}(\bn\cdot\bM\bn-\eta^\mathit{nn}_h)\|_\cS$ and
$\|h_\cS^{3/2}(\bn\cdot\Div\bM+\partial_t(\bt\cdot\bM\bn)-\eta^\mathit{sf}_h)\|_\cS$.
Here, $h_\cS|_E:=|E|$ for $E\in\cE$. Both curves are of order $O(h)$. 
The weightings are chosen to have the respective scalings of the edge-wise
$H^{-1/2}$ and $H^{-3/2}$ norms, cf.~\cite{Heuer_14_OEF}. We conclude that
the results indicate convergence $\|\trdDivM(\bM)-\boldeta_h\|_{-3/2,-1/2,J0,\cS}=O(h)$.
A numerical confirmation of this result would require to construct appropriate extensions
of $\boldeta_h$ to elements of $H(\dDiv,\Omega;\SS)$
and the calculation of the error in this norm, cf.~\eqref{tracenorm_p2}.
For the relevance in applications of the traces of $\bM$, in particular of the effective
shear force, we present their approximations on the left in
Figures~\ref{fig_primal_nodal_cont_Mnn} (approximation of $\bn\cdot\bM\bn$)
and~\ref{fig_primal_nodal_cont_sfM} (absolute value of approximation of effective shear force).
In both cases, the right figures show (with the same scale as on the respective left side)
their difference with the piecewise-constant $L_2$-projections of the exact values
(absolute difference in the latter case). 
In this example with smooth solution, we observe point-wise convergence of
the $\bn\cdot\bM\bn$ approximation, and point-wise control of the approximation of
the effective shear force, essentially an $H^{-3/2}$-functional.
In the case of the continuous primal hybrid method \eqref{KL_p3_h}, the results
are shown in Figure~\ref{fig_primal_cont_err} and are analogous.
In this case, the trace variable provides only an approximation
of $\bn\cdot\bM\bn$, with behavior as before.

Figures~\ref{fig_dual_err} and~\ref{fig_dual_nn_err} present the results
for the mixed hybrid and the normal-normal continuous mixed methods, respectively.
We show the curves for the errors $\|u-u_h\|$ (``u''), $\|\bM-\bM_h\|$ (``M''),
$\|\div\Div(\bM-\bM_h)\|_\mesh$ (``divDiv M'') and $\|D^2 u-\strain(G_h)\|$ (``D$^2$u''),
along with lines indicating $O(h)$ and $O(h^2)$. Here, $G_h$ is the $P^1(\mesh;\R^2)$
approximation of $\grad u$, given explicitly by the gradient unknowns of trace approximation
$\bpsi_h$ (in the case of scheme \eqref{KL_d1_h}) or $\psi_h$ (in the case of scheme
\eqref{KL_d2_h}). The numerical results indicate convergence order $O(h)$ of
$\|\bM-\bM_h\|$ and the trace approximations of $u$ in both cases,
and increased convergence order $O(h^2)$ of $\|u-u_h\|$ and $\|\div\Div(\bM-\bM_h)\|_\mesh$.
We have not shown the superconvergence of $u_h$ but the convergence
$\|\div\Div(\bM-\bM_h)\|_\mesh=O(h^2)$ holds by construction,
$\div_\mesh\Div_\mesh\bM_h=\Pi^1_\mesh f$.
We note that the observation $\|D^2 u-\strain(G_h)\|=O(h)$ means that
the approximation of $\partial_n u|_\cS$ in a skeleton space $H^{1/2}(\cS)$
(normal derivatives on $\cS$ of $H^2(\Omega)$-functions)
is of this order. A direct control of the approximation of $u|_\cS$ in
$H^{3/2}(\cS):=H^2(\Omega)|_\cS$ would require to provide an $H^2(\Omega)$-extension
of the corresponding component of $\bpsi_h$ or $\psi_h$. There are no simple low-order polynomial
elements to do this, which is precisely the reason to use composite HCT-elements
for domain-based approximations of $u\in H^2(\Omega)$.
For illustration, we have also implemented scheme \eqref{KL_d2_h} with the full
$\XdDiv$-element from \cite{FuehrerH_MKL} rather than reduced element $\XdDivnn$ for
the bending moment $\bM$. The curve labelled as ``M(15)'' in Figure~\ref{fig_dual_nn_err}
($15$ refers to the dimension of the full element) refers to this case and
indicates the improved convergence $\|\bM-\bM_h\|=O(h^2)$.

For comparison, Table~\ref{tab} lists the $L_2$-errors for the approximation
of the deflection in case of the four methods considered in this section,
both primal and both mixed methods. We have already seen that $\|u-u_h\|=O(N^{-2})$
in all the cases. The table shows that the continuous primal method
leads to slightly smaller deflection errors than the nodal-continuous primal method 
whereas the mixed methods show increased errors by factors of about $4.5$
and $2$ for the mixed hybrid and normal-normal mixed hybrid methods, respectively.

\begin{figure}[htb]
\begin{center}
\includegraphics[width=0.9\textwidth,height=0.6\textwidth]
{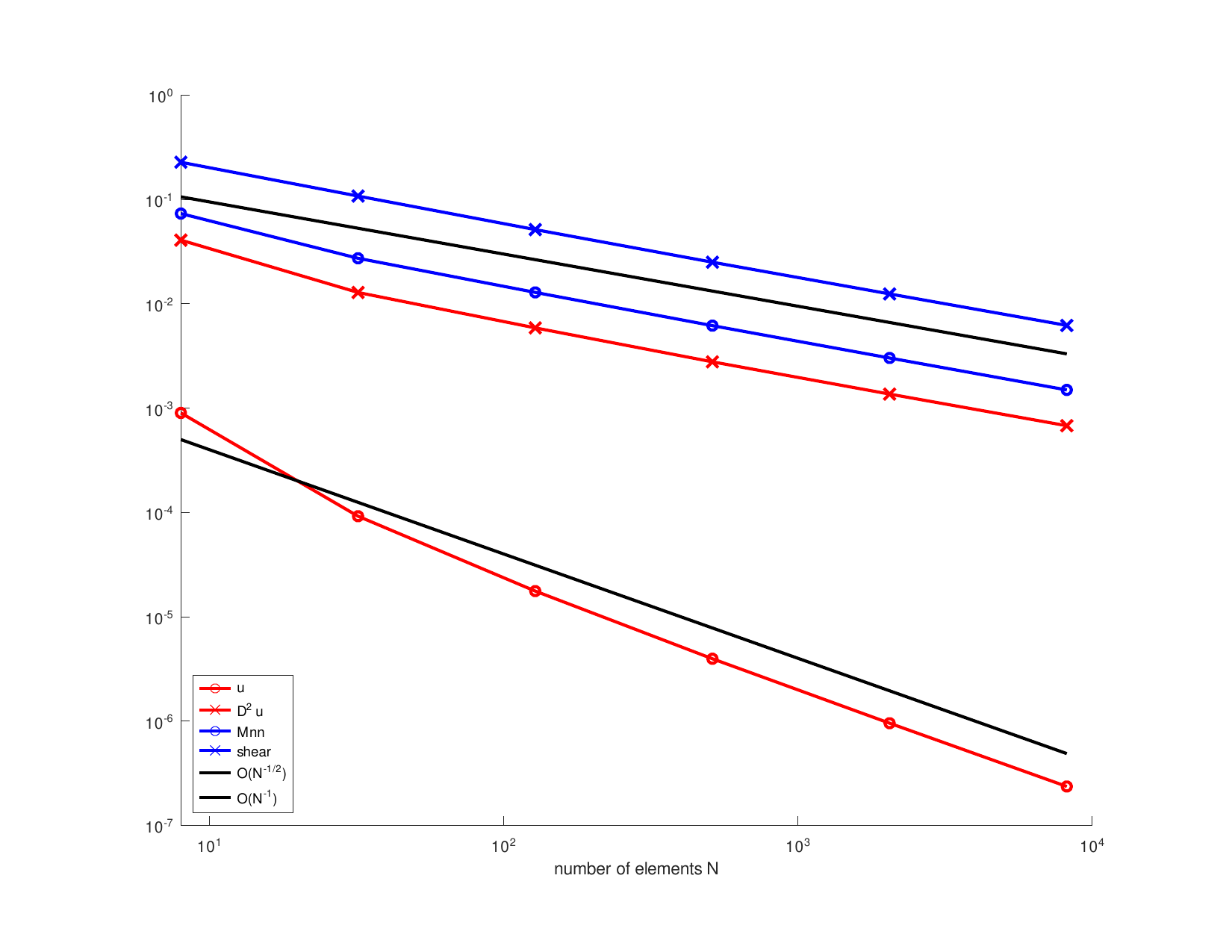}
\end{center}
\vspace{-1cm}
\caption{Errors for the nodal-continuous primal hybrid method \eqref{KL_p2_h}.
         The curves are
         ``u'': $\|u-u_h\|$, ``D$^2$u'': $\|D^2(u-u_h)\|_\mesh$,
         ``Mnn'': $L_2(\cS)$-error for normal-normal traces of $\bM$, weighted with $h^{1/2}$,
         ``shear'': $L_2(\cS)$-error for effective shear force approximation,
                    weighted with $h^{3/2}$, and curves indicating $O(h)$, $O(h^2)$.
}
\label{fig_primal_nodal_cont_err}
\end{figure}

\begin{figure}[htb]
\vspace{-1cm}
\begin{center}
\includegraphics[width=0.9\textwidth]{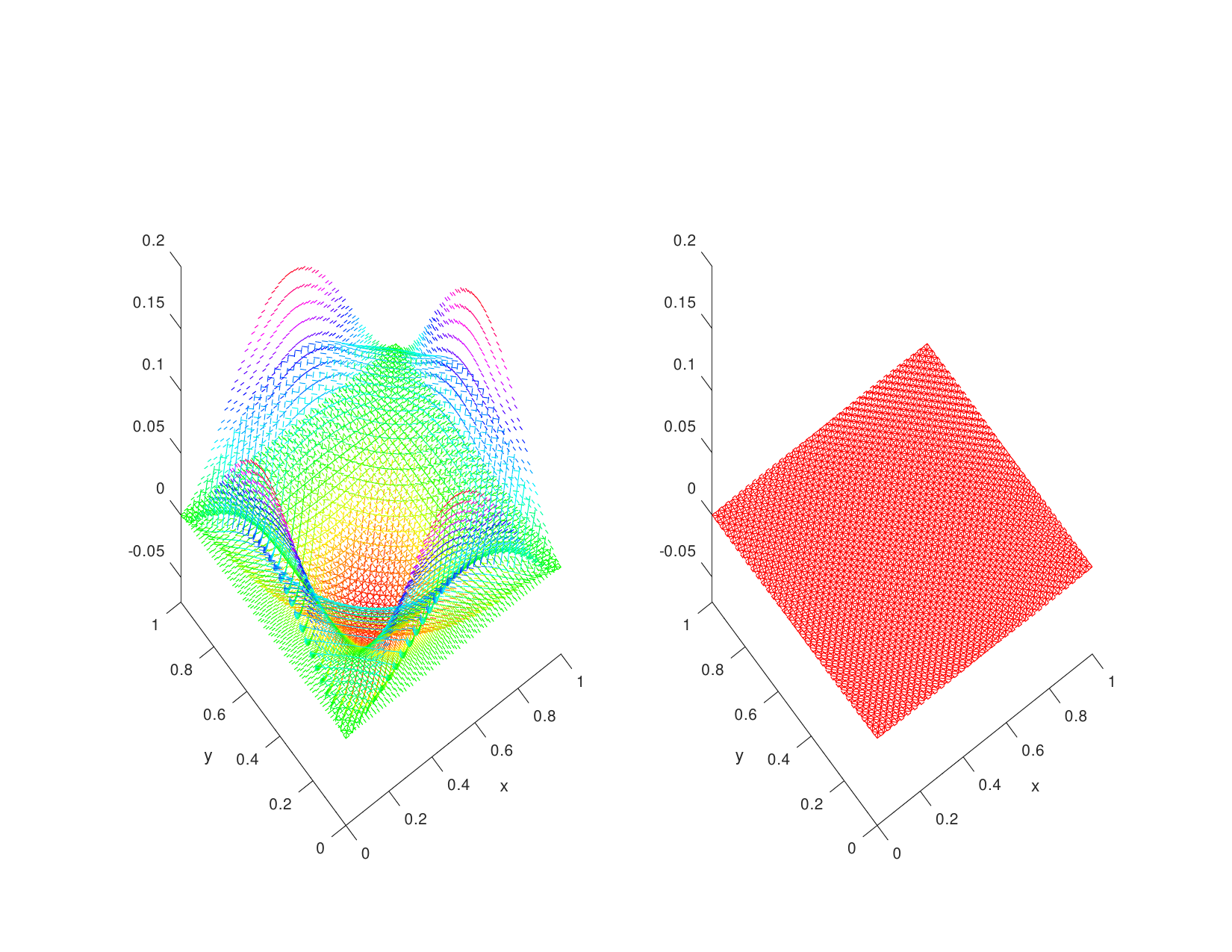}
\end{center}
\vspace{-1cm}
\caption{The approximation of trace component $\bn\cdot\bM\bn|_\cS$ from
         the nodal-continuous primal hybrid method \eqref{KL_p2_h} (on the left)
         and the difference of p/w constant $L^2$-projection of $\bn\cdot\bM\bn|_\cS$ and
         its approximation (on the right).
         The mesh has 8192 elements and 12416 edges.}
\label{fig_primal_nodal_cont_Mnn}
\end{figure}

\begin{figure}[htb]
\vspace{-1cm}
\begin{center}
\includegraphics[width=0.9\textwidth,height=0.6\textwidth]
{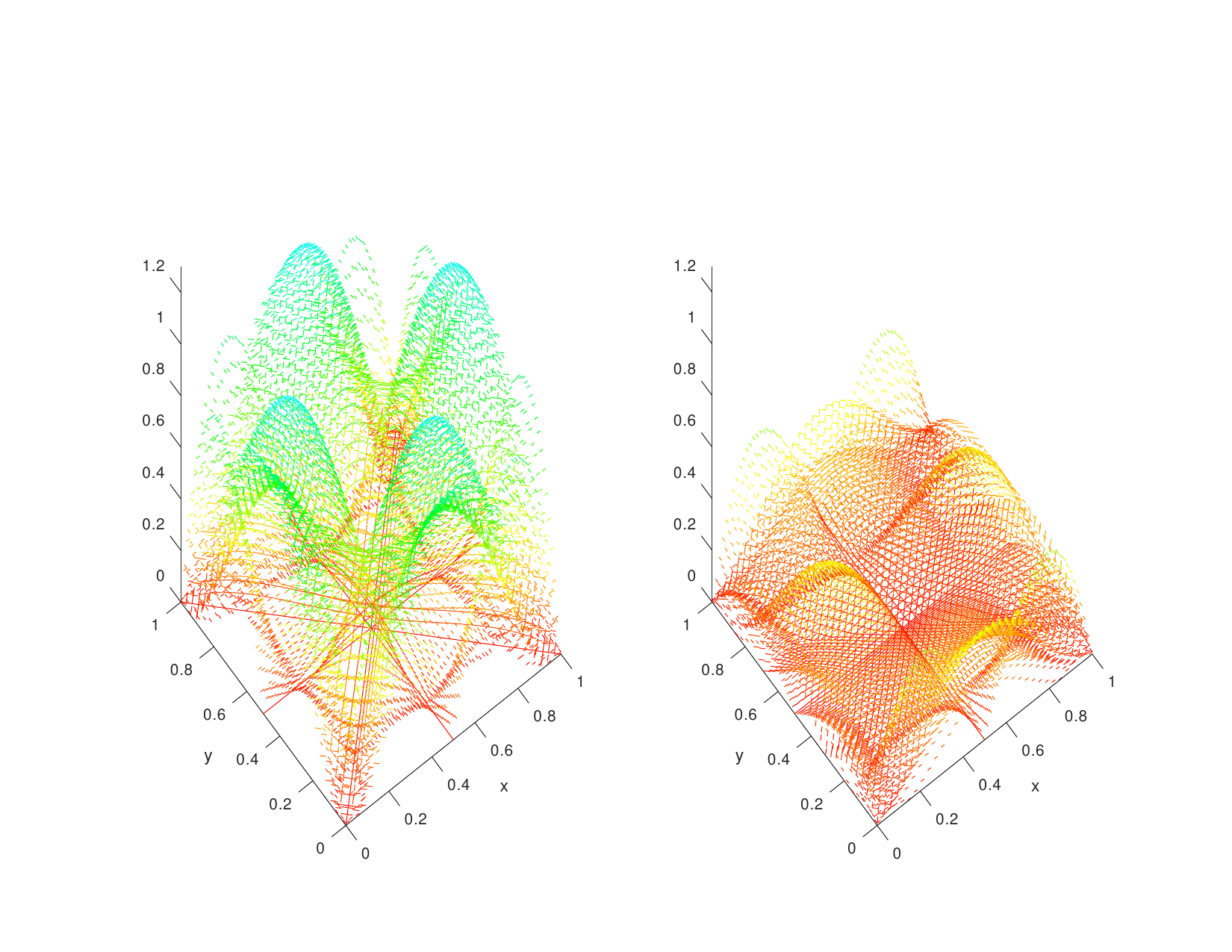}
\end{center}
\vspace{-1cm}
\caption{The approximation of the effective shear force from
         the nodal-continuous primal hybrid method \eqref{KL_p2_h} (absolute values, on the left)
         and the difference of the p/w constant $L_2$-projection of the effective shear force
         and its approximation (absolute values of the difference, on the right).
         The mesh has 8192 elements and 12416 edges.}
\label{fig_primal_nodal_cont_sfM}
\end{figure}

\begin{figure}[htb]
\vspace{-1cm}
\begin{center}
\includegraphics[width=0.9\textwidth,height=0.6\textwidth]
{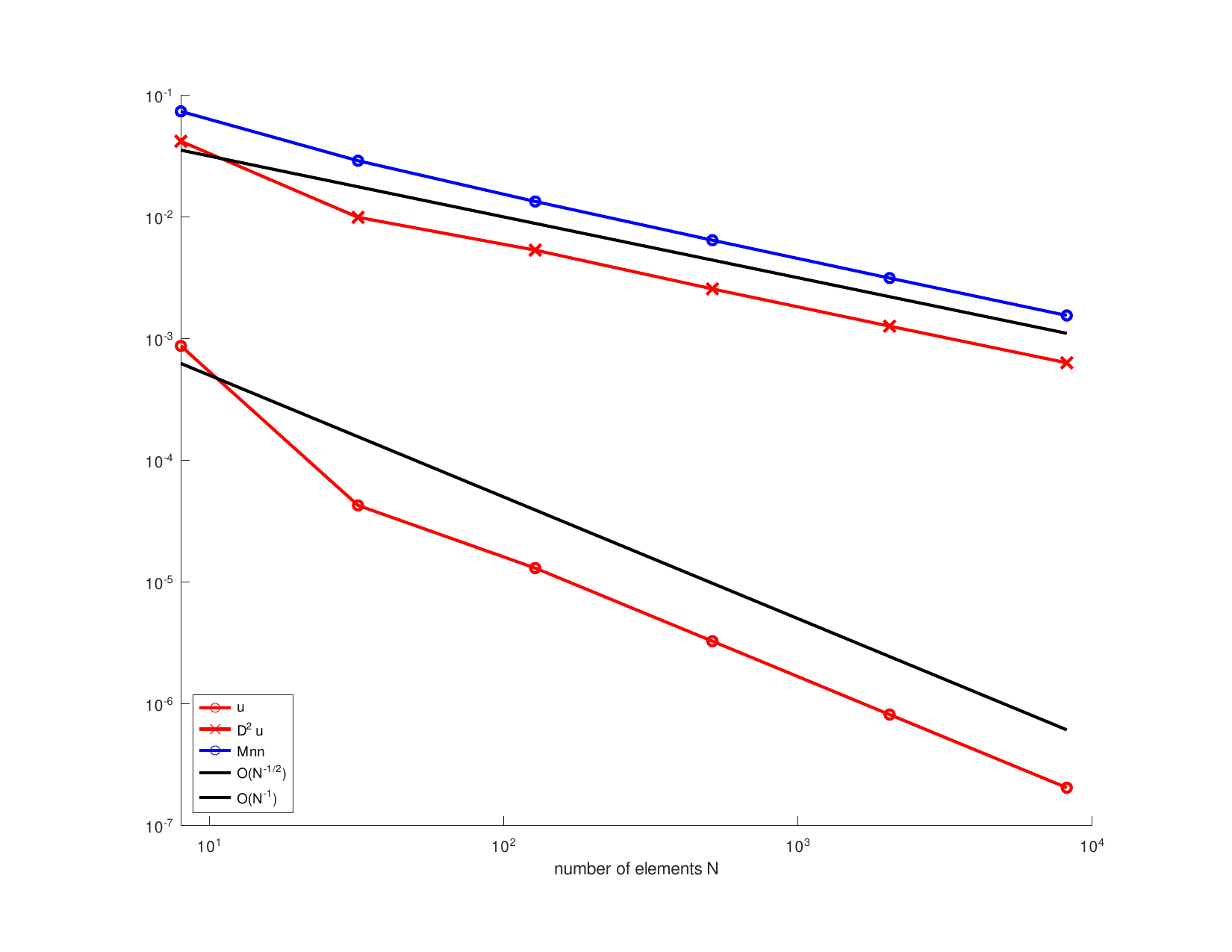}
\end{center}
\vspace{-1cm}
\caption{Errors for the continuous primal hybrid method \eqref{KL_p3_h}.
         The curves are
         ``u'': $\|u-u_h\|_{2,\mesh}$, ``Mnn'': $L_2(\cS)$-error for normal-normal traces
         of $\bM$, weighted with $h^{1/2}$, and a curve indicating $O(h)$.
}
\label{fig_primal_cont_err}
\end{figure}

\begin{figure}[htb]
\vspace{-1cm}
\begin{center}
\includegraphics[width=0.9\textwidth,height=0.6\textwidth]
{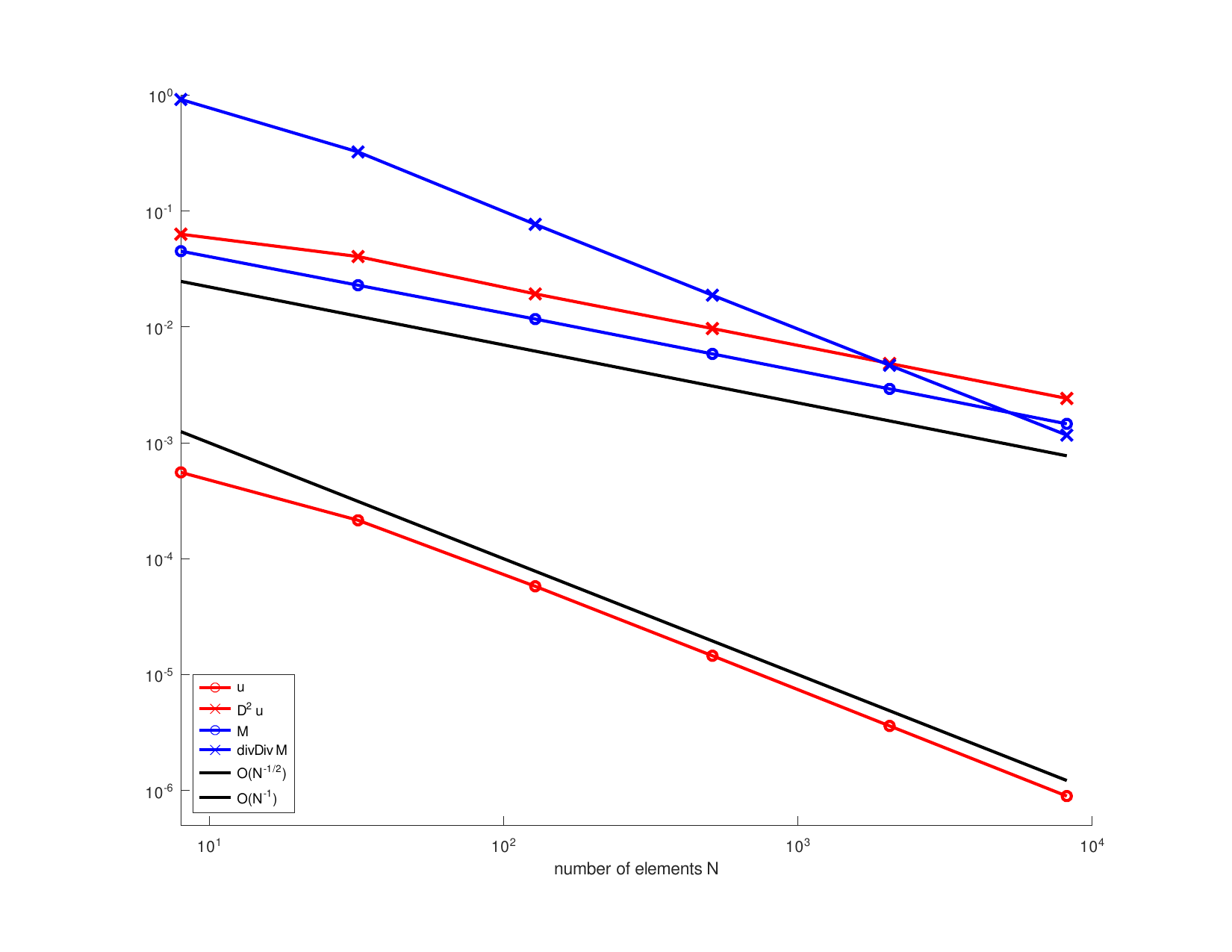}
\end{center}
\vspace{-1cm}
\caption{Errors for the mixed hybrid method \eqref{KL_d1_h}.
         The curves are
         ``u'': $\|u-u_h\|$, ``M'': $\|\bM-\bM_h\|$, ``divDiv M'': $\|f-\div\Div\bM_h\|_\mesh$,
         ``D$^2$u'': the $L_2$-error of the approximation of the Hessian induced by $\bpsi_h$,
         along with curves indicating orders $O(h)$ and $O(h^2)$.
}
\label{fig_dual_err}
\end{figure}

\begin{figure}[htb]
\vspace{-1cm}
\begin{center}
\includegraphics[width=0.9\textwidth,height=0.6\textwidth]
{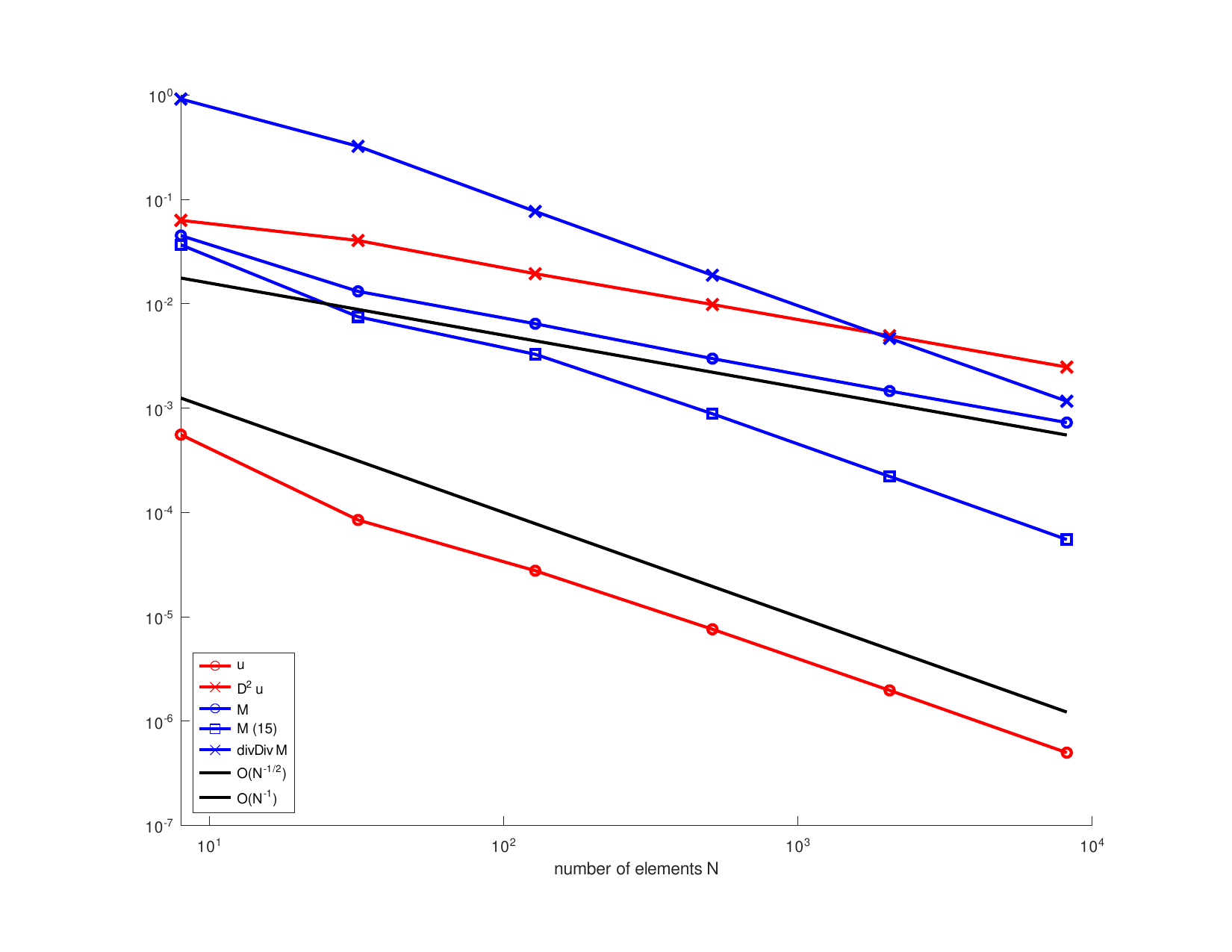}
\end{center}
\vspace{-1cm}
\caption{Errors for the normal-normal continuous mixed method \eqref{KL_d2_h}.
         The curves are
         ``u'': $\|u-u_h\|$, ``M'': $\|\bM-\bM_h\|$,
         ``M(15)'': $L_2$-error of $\bM$ with $\XdDiv(\mesh)$-approximation,
         ``divDiv M'': $\|f-\div\Div\bM_h\|_\mesh$,
         ``D$^2$u'': the $L_2$-error of the approximation of the Hessian induced by $\psi_h$,
         along with curves indicating orders $O(h)$ and $O(h^2)$.
}
\label{fig_dual_nn_err}
\end{figure}

\begin{table}
\begin{center}
\begin{tabular}{r|cccc}
 $N$ & primal(nod) & primal(cont) & mixed(hyb) & mixed(nn) \\\hline
      8 &  0.900e-03 & 0.871e-03 & 0.557e-03 & 0.640e-03\\
     32 &  0.921e-04 & 0.426e-04 & 0.215e-03 & 0.897e-04\\
    128 &  0.176e-04 & 0.130e-04 & 0.579e-04 & 0.269e-04\\
    512 &  0.396e-05 & 0.326e-05 & 0.146e-04 & 0.675e-05\\
   2048 &  0.955e-06 & 0.815e-06 & 0.361e-05 & 0.169e-05\\
   8192 &  0.236e-06 & 0.204e-06 & 0.896e-06 & 0.424e-06
\end{tabular}
\caption{
The $L_2$-errors $\|u-u_h\|$ from different methods:
nodal-continuous primal hybrid ``primal(nod)'' \eqref{KL_p2_h},
continuous primal hybrid ``primal(cont)'' \eqref{KL_p3_h},
mixed hybrid ``mixed(hyb)'' \eqref{KL_d1_h}, and
normal-normal continuous mixed hybrid ``mixed(nn)'' \eqref{KL_d2_h}.}
\label{tab}
\end{center}
\end{table}

\clearpage
\section{Proofs of abstract results} \label{sec_abstract_proofs}

We start with some preliminary results before proving
Theorems~\ref{thm_gp},~\ref{thm_gm}, Proposition~\ref{prop_split_h}, and Theorem~\ref{thm_gu}
at the end of this section.

The next statement is known for special cases, see, e.g.,~\cite[(IV.1.43)]{BrezziF_91_MHF},
\cite[(10.2.22)]{BoffiBF_13_MFE},
\cite{CarstensenDG_16_BSF}, \cite[Propositions~3.5,~3.9]{FuehrerHN_19_UFK},
\cite[Lemma~4]{FuehrerHS_20_UFR},
and in particular \cite[Lemma~A.10]{DemkowiczGNS_17_SDM} for an abstract version.

\begin{lemma} \label{la_tr}
Any $\phi\in H(A,\cS)$ and $\psi\in H(A^*,\cS)$ satisfy
$\|\phi\|_{A,\cS}=\|\phi\|_{(A^*,\sim,\mesh)^*}$ and
$\|\psi\|_{A^*,\cS}=\|\psi\|_{(A,\sim,\mesh)^*}$.
In particular, $\trAS$ and $\trAsS$ are bounded below (with constant $1$)
and the trace spaces $H(A,\cS)$ and $H(A^*,\cS)$ are closed.
\end{lemma}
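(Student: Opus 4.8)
The plan is to establish the first identity $\|\phi\|_{A,\cS}=\|\phi\|_{(A^*,\sim,\mesh)^*}$ for $\phi\in H(A,\cS)$; the second identity then follows by interchanging the roles of $A$ and $A^*$ (and accordingly of $\trAS$, $\trAsS$, $\sH(A,\mesh)$, $\sH(A^*,\mesh)$), since the entire construction is symmetric under $A\leftrightarrow A^*$. First I would dispatch the inequality $\|\phi\|_{(A^*,\sim,\mesh)^*}\le\|\phi\|_{A,\cS}$: for any $v\in H(A)$ with $\trAS(v)=\phi$ and any $w\in\sH(A^*,\mesh)$, a two-term Cauchy--Schwarz gives $\dual{\phi}{w}_\cS=\vdual{Av}{w}-\vdual{v}{A^*w}_\mesh\le\|Av\|\,\|w\|+\|v\|\,\|A^*w\|_\mesh\le\|v\|_A\|w\|_{A^*,\mesh}$. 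Taking the supremum over $w$ on the unit sphere and then the infimum over admissible $v$ yields the bound, which simultaneously shows that $\trAS$ is bounded with norm at most $1$.

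For the reverse inequality I would construct the minimal extension explicitly. Because $\trAS$ is bounded, $\ker\trAS$ is closed, so the affine set $\trASinv(\phi)$ contains a unique element $v^*$ of least $\|\cdot\|_A$-norm, characterized by $v^*\perp_A\ker\trAS$ and with $\|v^*\|_A=\|\phi\|_{A,\cS}$. Testing this orthogonality against functions compactly supported in a single element (which lie in $\ker\trAS$, since their trace vanishes after integration by parts) produces the Euler equation $A^*_\mesh(Av^*)=-v^*$ element by element. Hence $w^*:=Av^*\in H(A^*,\mesh)$ with $A^*_\mesh w^*=-v^*$, so that $\|w^*\|_{A^*,\mesh}^2=\|Av^*\|^2+\|v^*\|^2=\|v^*\|_A^2$. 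Granting momentarily that $w^*\in\sH(A^*,\mesh)$, a direct computation gives $\dual{\phi}{w^*}_\cS=\vdual{Av^*}{w^*}-\vdual{v^*}{A^*w^*}_\mesh=\|Av^*\|^2+\|v^*\|^2=\|v^*\|_A^2$, whence $\|\phi\|_{(A^*,\sim,\mesh)^*}\ge\dual{\phi}{w^*}_\cS/\|w^*\|_{A^*,\mesh}=\|v^*\|_A=\|\phi\|_{A,\cS}$ (the case $\phi=0$ being trivial). Together with the easy direction this gives the asserted equality; ``bounded below with constant $1$'' is merely a restatement of it, and the trace spaces are closed because $\trAS$ then induces an isometry from the complete space $H(A)/\ker\trAS$ onto $\bigl(H(A,\cS),\|\cdot\|_{(A^*,\sim,\mesh)^*}\bigr)$, whose range is therefore complete.

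The crux, and the step I expect to be the main obstacle, is verifying that the field $w^*=Av^*$ furnished by the minimal extension actually lies in the prescribed intermediate space $\sH(A^*,\mesh)$ and not merely in the larger broken space $H(A^*,\mesh)$. I would obtain this from the identity
\[
   \sH(A^*,\mesh)=\{w\in H(A^*,\mesh);\ \vdual{w}{An}_\mesh=\vdual{A^*_\mesh w}{n}_\mesh\ \ \forall n\in\ker\trAS\}.
\]
The inclusion ``$\subseteq$'' is immediate from the definition of $\ker\trAS$, since for $w\in\sH(A^*,\mesh)$ and $n\in\ker\trAS$ the pairing $\dual{\trAS(n)}{w}_\cS$ vanishes. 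The orthogonality $v^*\perp_A\ker\trAS$, rewritten through $v^*=-A^*_\mesh w^*$, says exactly that $w^*$ satisfies the defining relation on the right-hand side, so $w^*$ belongs to that set; the substance is thus the reverse inclusion ``$\supseteq$''.

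The hard part will be this reverse inclusion, which is a bipolar statement for the \emph{degenerate} broken pairing $\langle w,n\rangle:=\vdual{An}{w}_\mesh-\vdual{n}{A^*_\mesh w}_\mesh$. I would argue by Hahn--Banach: if $w$ lies in the right-hand set but $w\notin\sH(A^*,\mesh)$, then its $\|\cdot\|_{A^*,\mesh}$-orthogonal component $g$ is a nonzero element of $\sH(A^*,\mesh)^{\perp}$ that is still annihilated by the broken pairing against all of $\ker\trAS$, and one must show $g=0$. Carrying this out rigorously uses the closedness of $\sH(A^*,\mesh)$ together with the element-wise structure of the pairing (interior bubbles pin down $A^*_\mesh g$, while global test fields control the inter-element jumps that $\sH(A^*,\mesh)$ is designed to constrain); this is where the genuine work lies. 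Equivalently, the whole difficulty amounts to the fact that $\trAS$ is a \emph{partial isometry} onto its range—a structural property that abstract duality alone cannot supply and that the concrete Kirchhoff--Love realizations (for instance the normal--normal continuity defining $\Hnn(\dDiv,\mesh;\SS)$) make transparent.
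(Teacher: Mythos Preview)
Your easy direction and your construction of the minimal extension $v^*$ with $w^*:=Av^*$, $A_\mesh^* w^*=-v^*$, and $\|w^*\|_{A^*,\mesh}=\|v^*\|_A$ are all fine and match the paper's argument in spirit. The genuine gap is exactly where you flag it: you need $w^*\in\sH(A^*,\mesh)$ before you can even write $\dual{\phi}{w^*}_\cS$, since that functional is only defined on $\sH(A^*,\mesh)$. You propose to obtain this via the bipolar identity
\[
   \sH(A^*,\mesh)=\{w\in H(A^*,\mesh);\ \langle n,w\rangle=0\ \forall n\in\ker\trAS\},
\]
but you do not prove it; your Hahn--Banach sketch does not close, and you yourself concede that ``abstract duality alone cannot supply'' it. The broken pairing $\langle\cdot,\cdot\rangle$ is not a duality pairing between $H(A)$ and $H(A^*,\mesh)$, so a bipolar theorem for an \emph{arbitrary} closed intermediate space $\sH(A^*,\mesh)$ is not available off the shelf.

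The paper avoids this obstacle entirely by reversing the order of construction. It \emph{first} defines $w\in\sH(A^*,\mesh)$ as the Riesz representative of $-\dual{\phi}{\cdot}_\cS$ in $\sH(A^*,\mesh)$,
\[
   \vdual{A^*w}{A^*\dw}_\mesh+\vdual{w}{\dw}=-\dual{\phi}{\dw}_\cS
   \quad\forall\dw\in\sH(A^*,\mesh),
\]
so membership in $\sH(A^*,\mesh)$ is automatic. Testing with compactly supported $\dw$ gives $A(A_\mesh^* w)=-w$, so $v:=A_\mesh^* w\in H(A)$; one then checks $\trAS(v)=\phi$ directly from the variational identity, and since $v$ is defined by the minimum-energy problem for $\trAS$, $\|v\|_A=\|\phi\|_{A,\cS}$. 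Setting $\dw=w$ yields $\|w\|_{A^*,\mesh}^2=-\dual{\phi}{w}_\cS=\|v\|_A^2$, and the reverse inequality follows. Unwinding this, one sees $w=-Av=-w^*$, so a posteriori your $w^*$ \emph{is} in $\sH(A^*,\mesh)$; but the clean way to see it is to build $w$ there from the start rather than to fight for a bipolar characterization.
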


\begin{proof}
We only show the norm relation for $\phi\in H(A,\cS)$. The proof for $\psi\in H(A^*,\cS)$
is analogous.

Relation $\|\phi\|_{(A^*,\sim,\mesh)^*}\le \|\phi\|_{A,\cS}$ is due to the Cauchy--Schwarz
inequality. In fact, considering $v\in H(A)$ with $\trAS(v)=\phi$ and bounding
\[
   \dual{\phi}{w}_\cS = \vdual{Av}{w}-\vdual{v}{A^*w}_\mesh
   \le \|v\|_A \|w\|_{A^*,\mesh}\quad \forall w\in H(A^*,\mesh),
\]
we find that
\[
   \|\phi\|_{(A^*,\sim,\mesh)^*} =
   \sup_{w\in \sH(A^*,\mesh),\; \|w\|_{A^*,\mesh}=1} \dual{\phi}{w}_\cS \le \|v\|_A.
\]
Taking the infimum with respect to $v\in H(A)$ subject to $\trAS(v)=\phi$ gives the result.

Let $\phi\in H(A,\cS)$ given.
It remains to show the inequality $\|\phi\|_{A,\cS}\le \|\phi\|_{(A^*,\sim,\mesh)^*}$.
To this end we first define $w\in \sH(A^*,\mesh)$ by
\begin{align} \label{pf_tr_def1}
   \vdual{A^*w}{A^*\dw}_\mesh + \vdual{w}{\dw}
   = -\dual{\phi}{\dw}_\cS\quad\forall\dw\in \sH(A^*,\mesh)
\end{align}
and then $v\in H(A)$ by
\begin{align} \label{pf_tr_def2}
   \vdual{Av}{A\dv} + \vdual{v}{\dv} = -\dual{\trAS(\dv)}{w}_\cS\quad\forall\dv\in H(A).
\end{align}
We establish some relations between $v$, $w$, and $\phi$.
\begin{enumerate}
\item Function $v$ satisfies $v=A_\mesh^*w$. To show this, let $\tilde v:=A_\mesh^*w$.
Relation~\eqref{pf_tr_def1} means that $AA_\mesh^*w=-w$ in the distributional sense
so that $\tilde v\in H(A)$. Then,
\[
   \vdual{A\tilde v}{A\dv} + \vdual{\tilde v}{\dv} =
   -\vdual{w}{A\dv} + \vdual{A^*w}{\dv}_\mesh = - \dual{\trAS(\dv)}{w}_\cS
   \quad\forall\dv\in H(A),
\]
that is, $\tilde v=v$ is the solution to \eqref{pf_tr_def2}.
\item Function $v$ has trace $\trAS(v)=\phi$. This follows with the previously seen relations
and \eqref{pf_tr_def1}, calculating
\[
   \dual{\trAS(v)}{\dw}_\cS = \vdual{Av}{\dw} - \vdual{v}{A^*\dw}_\mesh
   = -\vdual{w}{\dw} - \vdual{A^*w}{A^*\dw}_\mesh
   = \dual{\phi}{\dw}_\cS
\]
for any $\dw\in H(A^*,\mesh)$.
\item Function $v$ has norm $\|v\|_A = \|\phi\|_{A,\cS}$.
      This is due to the definition of $v$, being the minimum energy extension of its trace.
\end{enumerate}

We conclude the proof by setting $\dw:=w$ in \eqref{pf_tr_def1} and $\dv:=v$
in \eqref{pf_tr_def2} to find that
\[
   \|w\|_{A^*,\mesh}^2 = -\dual{\phi}{w}_\cS = \|v\|_A^2
\]
so that
\[
   \|\phi\|_{A,\cS} = \|v\|_A =  -\frac {\dual{\phi}{w}_\cS}{\|w\|_{A^*,\mesh}}
   \le \|\phi\|_{(A^*,\sim,\mesh)^*}.
\]
\end{proof}

For a variant of the next statement we refer to \cite[Lemma~A.9]{DemkowiczGNS_17_SDM}.
Specific versions have been considered, e.g.,
in~\cite[Propositions~III.1.1,~III.1.2]{BrezziF_91_MHF},
\cite[Propositions~3.4(i),~3.8(i)]{FuehrerHN_19_UFK}, and \cite[Proposition~5]{FuehrerHS_20_UFR}.

\begin{lemma} \label{la_reg}
Any $v\in\sH(A,\mesh)$ and $w\in\sH(A^*,\mesh)$ satisfy
\begin{align*}
   v\in H_0(A) &\quad\Leftrightarrow\quad
   \dual{\psi}{v}_\cS=0\quad\forall\psi\in H(A^*,\cS),\\
   w\in H(A^*) &\quad\Leftrightarrow\quad
   \dual{\phi}{w}_\cS=0\quad\forall\phi\in H_0(A,\cS).
\end{align*}
\end{lemma}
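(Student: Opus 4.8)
The plan is to prove the first equivalence in detail and then obtain the second by the symmetric argument (exchanging the roles of $A$ and $A^*$, of $\sH(A,\mesh)$ and $\sH(A^*,\mesh)$, and of $H_0(A)$ and $H(A^*)$). The first step is to rewrite the skeleton condition as a single distributional identity. Fix $v\in\sH(A,\mesh)$. For $\psi\in H(A^*,\cS)$ choose any $w\in H(A^*)$ with $\trAsS(w)=\psi$; by the definition of the pairing, $\dual{\psi}{v}_\cS=\vdual{A^*w}{v}-\vdual{w}{A_\mesh v}$, a value independent of the representative $w$. Since $H(A^*,\cS)=\trAsS(H(A^*))$, as $\psi$ runs over the trace space the representatives run over all of $H(A^*)$, so that $\dual{\psi}{v}_\cS=0$ for all $\psi\in H(A^*,\cS)$ is equivalent to
\begin{equation*}
   \vdual{A^*w}{v}=\vdual{w}{A_\mesh v}\qquad\forall w\in H(A^*). \tag{$\star$}
\end{equation*}
The forward implication is then immediate: if $v\in H_0(A)$ then $v\in H(A)$, so $A_\mesh v=Av$, and $\trA(v)=0$ says exactly $\vdual{Av}{w}-\vdual{v}{A^*w}=0$ for all $w\in H(A^*)$, which rearranges into $(\star)$.

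The converse is the crux. Assuming $(\star)$, I would first test only against $w\in C_c^\infty(\Omega;U)$, which lies in $H(A^*)$ because $A^*w$ is smooth with compact support. For such $w$ the identity reads $\vdual{A^*w}{v}=\vdual{w}{A_\mesh v}$, where $A_\mesh v\in L_2(\Omega;U)$ since $v\in\sH(A,\mesh)\subset H(A,\mesh)$, and the left-hand side is the distribution $Av$ paired with $w$. Hence the distributional $A$-derivative of $v$ is represented by the $L_2$-function $A_\mesh v$, which promotes $v$ from the broken space to $v\in H(A)$ with $Av=A_\mesh v$. With this regularity, $(\star)$ becomes $\vdual{Av}{w}-\vdual{v}{A^*w}=0$ for all $w\in H(A^*)$, i.e.\ $\trA(v)=0$, so $v\in H_0(A)$. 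I expect this promotion step to be the main obstacle: the point is that $C_c^\infty(\Omega;U)\subset H(A^*)$ and that testing against it suffices to identify the distributional derivative, and one must be careful that the piecewise and global operators agree on $H(A)$ so that no interelement jump terms are tacitly discarded when passing from $A_\mesh v$ to $Av$.

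For the second equivalence the same scheme applies with the inclusion $C_c^\infty(\Omega)\subset H_0(A)$. One reduces $\dual{\phi}{w}_\cS=0$ for all $\phi\in H_0(A,\cS)=\trAS(H_0(A))$ to the identity $\vdual{Av}{w}=\vdual{v}{A^*_\mesh w}$ for all $v\in H_0(A)$, tests against compactly supported smooth $v$ to conclude $w\in H(A^*)$ with $A^*w=A^*_\mesh w$, and for the converse uses $A^*_\mesh w=A^*w$ together with $\trA(v)=0$ to make the pairing vanish.
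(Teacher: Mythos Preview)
Your proof is correct and follows essentially the same approach as the paper: both reduce the skeleton condition to the identity $\vdual{A^*w}{v}=\vdual{w}{A_\mesh v}$ for all $w\in H(A^*)$, test against $C_c^\infty(\Omega;U)$ to promote $v$ from the broken space to $H(A)$, and then use the full range $w\in H(A^*)$ to conclude $\trA(v)=0$. The only cosmetic difference is that you package the reformulation as $(\star)$ up front, whereas the paper performs the same computations inline.
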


\begin{proof}
We show the statement for $v\in H_0(A)$. The other case is analogous.
Any $\psi\in H(A^*,\cS)$ can be written as $\psi=\trAsS(w)$ for a $w\in H(A^*)$.
By definition of $\trAsS$, $\trA$, and $H_0(A)$ we find for any $v\in H_0(A)$ that
\[
   \dual{\psi}{v}_\cS = \vdual{A^* w}{v} - \vdual{w}{Av}_\mesh
   = \vdual{A^* w}{v} - \vdual{w}{Av} = -\dual{\trA(v)}{w}_\Gamma = 0
   \quad\forall w\in H(A^*).
\]
This shows the direction ``$\Rightarrow$''.
To see the other direction let $v\in\sH(A,\mesh)$ be given with
$\dual{\psi}{v}_\cS=0$ for any $\psi\in H(A^*,\cS)$.
We calculate $Av$ in the distributional sense,
\[
   A v(w)= \vdual{v}{A^*w}
   = \dual{\trAsS(w)}{v}_\cS + \vdual{Av}{w}_\mesh
   = \vdual{Av}{w}_\mesh\quad\forall w\in C_0^\infty(\Omega;U),
\]
and conclude that $Av\in L_2(\Omega)$ so that $v\in H(A)$. To see that $v\in H_0(A)$
we calculate
\[
   \dual{\trA(v)}{w}_\Gamma = \vdual{Av}{w} - \vdual{v}{A^*w}
   = \vdual{Av}{w}_\mesh - \vdual{v}{A^*w} = \dual{\trAsS(w)}{v}_\cS = 0
\]
for any $w\in H(A^*)$ by assumption. This finishes the proof.
\end{proof}

The next result is \cite[Theorem~3.3]{CarstensenDG_16_BSF}. We just translate it to our
notation and verify the required assumptions from \cite{CarstensenDG_16_BSF}.
For a similar abstract result see \cite[Appendix~A]{GargPVdZC_14_ACF}.

\begin{lemma} \label{la_infsup}
Assume that \eqref{infsup} holds. The bilinear form
\[
   b:\;\left\{
   \begin{array}{cl}
      \sH(A^*,\mesh) \times \bigl(L_2(\Omega)\times H_0(A,\cS)\bigr) &\to \R,\\
      (w;v,\phi) &\mapsto \vdual{A^*w}{v}_\mesh + \dual{\phi}{w}_\cS
   \end{array}\right.
\]
satisfies the inf-sup property
\begin{equation*}
   \sup_{w\in\sH(A^*,\mesh),\; \|w\|_{A^*,\mesh}=1}
   b(w;v,\phi) \ge C \bigl(\|v\| + \|\phi\|_{A,\cS}\bigr)
   \quad\forall v\in L_2(\Omega),\ \forall\phi\in H_0(A,\cS)
\end{equation*}
with a constant $C>0$ that is independent of $v$, $\phi$, and $\mesh$.
\end{lemma}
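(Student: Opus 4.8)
The plan is to construct, for given $v\in L_2(\Omega)$ and $\phi\in H_0(A,\cS)$, a single test function $w\in\sH(A^*,\mesh)$ that simultaneously controls $\|v\|$ and $\|\phi\|_{A,\cS}$, realized as a linear combination $w=w_0+t\,w_1$ in which each summand handles one of the two contributions. The decisive structural fact I would exploit is that functions from $H(A^*)$ are invisible to the traces in $H_0(A,\cS)$: by Lemma~\ref{la_reg}, $w_0\in H(A^*)$ implies $\dual{\phi}{w_0}_\cS=0$ for every $\phi\in H_0(A,\cS)$. This decouples the two mechanisms, letting me control the field part with a globally regular test function while leaving the trace part untouched.

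First I would use the continuous inf-sup condition \eqref{infsup} to select $w_0\in H(A^*)\subset\sH(A^*,\mesh)$ with $\|w_0\|_{A^*}=1$ and $\vdual{A^*w_0}{v}\ge\cis\|v\|$ (the supremum over the unit sphere being attained by the Riesz representative). Since $w_0\in H(A^*)$, the piecewise operator agrees with the global one, so $\vdual{A^*w_0}{v}_\mesh=\vdual{A^*w_0}{v}$ and, by Lemma~\ref{la_reg}, $\dual{\phi}{w_0}_\cS=0$; hence $b(w_0;v,\phi)\ge\cis\|v\|$. Next I would invoke the trace-norm identity of Lemma~\ref{la_tr}, namely $\|\phi\|_{A,\cS}=\|\phi\|_{(A^*,\sim,\mesh)^*}=\sup_{w\in\sH(A^*,\mesh),\,\|w\|_{A^*,\mesh}=1}\dual{\phi}{w}_\cS$, to pick $w_1\in\sH(A^*,\mesh)$ with $\|w_1\|_{A^*,\mesh}=1$ and $\dual{\phi}{w_1}_\cS=\|\phi\|_{A,\cS}$, while the remaining term $\vdual{A^*w_1}{v}_\mesh$ is bounded by $\|v\|$ via Cauchy--Schwarz. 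Combining these, for $t>0$ I would estimate
\[
   b(w_0+t\,w_1;v,\phi)\ge\cis\|v\|-t\|v\|+t\|\phi\|_{A,\cS},
\]
so that $t=\cis/2$ yields $b(w;v,\phi)\ge\tfrac{\cis}{2}\bigl(\|v\|+\|\phi\|_{A,\cS}\bigr)$, while $\|w\|_{A^*,\mesh}\le 1+\cis/2$ by the triangle inequality. Dividing produces the claimed inf-sup bound with $C=\cis/(2+\cis)$.

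The main obstacle, and the only genuinely delicate point, is the cross term $\vdual{A^*w_1}{v}_\mesh$, which carries no sign and could in principle spoil the field estimate. The remedy is exactly the decoupling above: because $w_0$ alone already secures the factor $\cis\|v\|$ with no trace contribution, I can afford to add only a small multiple $t$ of $w_1$, small enough (proportional to $\cis$) that the detrimental term $-t\|v\|$ is absorbed into $\cis\|v\|$ while the beneficial term $t\|\phi\|_{A,\cS}$ survives. This gives a fully self-contained argument; alternatively, one may simply cite \cite[Theorem~3.3]{CarstensenDG_16_BSF} after verifying its abstract hypotheses, which amount to the closedness and bounded-below properties of the trace operators $\trAS$, $\trAsS$ supplied by Lemma~\ref{la_tr} together with the regularity characterization of Lemma~\ref{la_reg}.
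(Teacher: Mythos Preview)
Your argument is correct. The attainment of the two suprema (for $w_0$ via the Riesz representative in $H(A^*)$, and for $w_1$ via the construction in the proof of Lemma~\ref{la_tr}) is justified since both underlying spaces are Hilbert, and the decoupling via Lemma~\ref{la_reg} works exactly as you describe. The choice $t=\cis/2$ and the resulting constant $C=\cis/(2+\cis)$ are fine; the edge cases $v=0$ or $\phi=0$ are trivially absorbed.

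The paper takes a different route: rather than constructing a combined test function, it simply identifies the statement as an instance of \cite[Theorem~3.3]{CarstensenDG_16_BSF} and verifies the three abstract hypotheses of that theorem---namely \eqref{infsup} for the field inf-sup, Lemma~\ref{la_reg} for the kernel characterization $Y_0=\{w\in Y:\hat b(\phi,w)=0\ \forall\phi\}$, and Lemma~\ref{la_tr} for the trace inf-sup. Your direct construction is essentially what underlies that cited theorem, so the two approaches use the same ingredients; yours is more self-contained and yields an explicit constant, while the paper's is shorter but defers the actual mechanism to an external reference. You already note this alternative at the end of your proposal, so you have correctly identified both paths.
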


\begin{proof}
We set
\begin{align*}
   &Y:=\sH(A^*,\mesh),\quad
   Y_0:=H(A^*),\quad
   X_0:= L_2(\Omega),\quad
   \hat X:= H_0(A,\cS),\\
   &b_0(v,w):=\vdual{A^*w}{v}_\mesh,\quad
   \hat b(\phi,w):= \dual{\phi}{w}_\cS
   \quad\text{for}\ v\in X_0,\ \phi\in \hat X,\ w\in Y.
\end{align*}
Switching to our notation, Assumptions~3.1 and~3.2 in \cite{CarstensenDG_16_BSF} read as
\begin{align} \label{A1}
   &\sup_{0\not=w\in Y_0} \frac {b_0(v,w)}{\|w\|_{Y}}
   := \sup_{0\not=w\in H(A^*)} \frac {\vdual{A^*w}{v}_\mesh}{\|w\|_{A^*}}
   \ge
   c_0 \|v\| =: c_0 \|v\|_{X_0} \quad\forall v\in X_0
\end{align}
and
\begin{subequations} \label{A2}
\begin{equation} \label{A2a}
\begin{split}
   Y_0:=H(A^*)
   &= \{w\in \sH(A^*,\mesh);\; \dual{\phi}{w}_\cS=0\quad \forall\phi\in H_0(A,\cS)\},\\
   &=: \{w\in Y;\; \hat b(\phi,w)=0\quad \forall\phi\in \hat X\},
\end{split}
\end{equation}
\begin{equation} \label{A2b}
   \sup_{0\not=w\in Y} \frac {\hat b(\phi,w)}{\|w\|_{Y}}
   :=
   \sup_{0\not=w\in\sH(A^*,\mesh)} \frac {\dual{\phi}{w}_\cS}{\|w\|_{A^*,\mesh}}
   \ge c\|\phi\|_{A,\cS} =: c\|\phi\|_{\hat X}\quad\forall\phi\in\hat X.
\end{equation}
\end{subequations}
Inf-sup property \eqref{A1} holds by assumption \eqref{infsup} with constant $c_0=\cis$
independent of $v$ (and $\mesh$),
Lemma~\ref{la_reg} proves \eqref{A2a}, and \eqref{A2b} holds by Lemma~\ref{la_tr} with $c=1$.
The statement follows by \cite[Theorem~3.3]{CarstensenDG_16_BSF}.
\end{proof}

\subsection{Proof of Theorem~\ref{thm_gp}} \label{pf_thm_gp}

Problem \eqref{gp} is a mixed system that satisfies the usual
conditions. In particular, all (bi)linear forms are uniformly bounded and
duality $\dual{\dpsi}{u}_\cS$ satisfies the inf-sup condition with constant $1$ by
Lemma~\ref{la_tr}. Furthermore, by Lemma~\ref{la_reg} we have the kernel representation
\[
   \{u\in\sH(A,\mesh);\; \dual{\dpsi}{u}_\cS=0\ \forall \dpsi\in H(A^*,\cS)\}
   = H_0(A)
\]
and the $H_0(A)$-coercivity of $\vdual{\cC\,\cdot}{\cdot}$ holds by assumption \eqref{PF}.
This proves the well-posedness of \eqref{gp}.
Lemma~\ref{la_reg} and relation \eqref{gpb} imply that $u\in H_0(A)$.
Relation~\eqref{gpa} with $\du\in C_0^\infty(\Omega)$
and an application of Lemma~\ref{la_reg} to conclude that $\dual{\psi}{\du}_\cS=0$ for
such $\du$, show that $A^*\cC A u=f$. Using this relation together with the definition
of $\trAsS$, and again \eqref{gpa}, we find that
\[
   \dual{\trAsS(\cC Au)}{\du}_\cS
   = \vdual{f}{\du}_\mesh - \vdual{\cC Au}{A\du}
   = \dual{\phi}{\du}_\cS
   \quad\forall \du\in\sH(A,\mesh),
\]
that is, $\phi=\trAsS(\cC Au)$. This finishes the proof.

\subsection{Proof of Theorem~\ref{thm_gm}} \label{pf_thm_gm}

Again, problem \eqref{gm} is a mixed system that satisfies the usual
conditions. All (bi)linear forms are uniformly bounded.
By Lemma~\ref{la_infsup}, the bilinear form
\[
   b(z;\du,\dphi) := \vdual{A^*z}{\du}_\mesh + \dual{\dphi}{z}_\cS
\]
satisfies the inf-sup condition.
By Lemma~\ref{la_reg} we have the kernel representation
\[
   \{z\in\sH(A^*,\mesh);\; \dual{\dphi}{z}_\cS=0\ \forall \dphi\in H_0(A,\cS)\} = H(A^*)
\]
so that
\begin{align*}
   \ker(b) :=
   &\{z\in\sH(A^*,\mesh);\;
      b(z;\du,\dphi)=0\ \forall \du\in L_2(\Omega),\ \forall\dphi\in H_0(A,\cS)\}\\
   = &\{z\in H(A^*);\; A^*z=0\}.
\end{align*}
The coercivity
\[
   \vdual{\cCinv z}{z} \ge C \|z\|_{A^*,\mesh}^2\quad\forall z\in \ker(b)
\]
with a constant $C>0$ independent of $z$ and $\mesh$ follows.
Therefore, \eqref{gm} is well posed.

Relation $w=\cC Au$ follows from \eqref{gma} by a distributional argument,
also implying that $u\in H(A)$.
Then, \eqref{gma} shows that
\[
   \dual{\trAS(u)}{\dw}_\cS = \vdual{\dw}{Au} - \vdual{A^*\dw}{u}_\mesh
   = \dual{\phi}{\dw}_\cS
   \quad\forall \dw\in\sH(A^*,\mesh),
\]
that is, $\phi=\trAS(u)$. This implies that $u\in H_0(A)$.
Indeed, since $\phi\in H_0(A,\cS)$ we find with Lemma~\ref{la_reg} that
\[
   \dual{\trA(u)}{\dw}_\Gamma = \vdual{Au}{\dw}-\vdual{u}{A^*\dw}_\mesh
   =\dual{\trAS(u)}{\dw}_\cS = \dual{\phi}{\dw}_\cS = 0
   \quad\forall\dw\in H(A^*),
\]
that is, $\trA(u)=0$.

By relation \eqref{gmb} and Lemma~\ref{la_reg}
we have that $w\in H(A^*)$ and $A^*w=f$.
This finishes the proof.

\subsection{Proof of Proposition~\ref{prop_split_h}} \label{pf_prop_split_h}

We use inf-sup stability \eqref{infsup}, the properties of operator $\opF_1$, and relation
$\dual{\dphi}{w}_\cS=0$ for $\dphi\in H_0(A,\cS)$ and $w\in H(A^*)$ by Lemma~\ref{la_reg},
to deduce the bound
\begin{align} \label{infsup_h1}
   \cis \|\du\|
   &\le \sup_{w\in H(A^*)\setminus\{0\}} \frac {\vdual{A^* w}{\du}}{\|w\|_{A^*}}
   \le C_1 \sup_{w\in H(A^*)\cap \sH_h(A^*,\mesh)\setminus\{0\}}
       \frac {\vdual{A^* w}{\du}+\dual{\dphi}{w}_\cS}{\|w\|_{A^*}} \nonumber\\
   &\le C_1 \sup_{w\in \sH_h(A^*,\mesh)\setminus\{0\}}
       \frac {\vdual{A^* w}{\du}_\mesh+\dual{\dphi}{w}_\cS}{\|w\|_{A^*,\mesh}}
   \quad\forall \dphi\in H_h(A,\cS),\ \forall\du\in H_h(\mesh).
\end{align}
Lemma~\ref{la_tr}, the properties of operator $\opF_2$, and estimate \eqref{infsup_h1}
show that any $\dphi\in H_h(A,\cS)$ and $\du\in H_h(\mesh)$ satisfy
\begin{align} \label{infsup_h2}
   \|\dphi\|_{A,\cS}
   &= \sup_{w\in\sH(A^*,\mesh)\setminus\{0\}} \frac {\dual{\dphi}{w}_\cS}{\|w\|_{A^*,\mesh}}
   \le C_2 \sup_{w\in\sH_h(A^*,\mesh)\setminus\{0\}}
           \frac {\dual{\dphi}{w}_\cS}{\|w\|_{A^*,\mesh}} \nonumber\\
   &= C_2 \sup_{w\in\sH_h(A^*,\mesh)\setminus\{0\}} \Bigl(
           \frac {\vdual{A^* w}{\du}_\mesh + \dual{\dphi}{w}_\cS}{\|w\|_{A^*,\mesh}}
         - \frac {\vdual{A^* w}{\du}_\mesh}{\|w\|_{A^*,\mesh}}\Bigr) \nonumber\\
   &\le C_2 \sup_{w\in\sH_h(A^*,\mesh)\setminus\{0\}}
           \frac {\vdual{A^* w}{\du}_\mesh + \dual{\dphi}{w}_\cS}{\|w\|_{A^*,\mesh}}
    + C_2 \|\du\| \nonumber\\
   &\le C_2\bigl(1+\frac {C_1}{\cis}\Bigr) 
        \sup_{w\in\sH_h(A^*,\mesh)\setminus\{0\}}
        \frac {\vdual{A^* w}{\du}_\mesh + \dual{\dphi}{w}_\cS}{\|w\|_{A^*,\mesh}}.
\end{align}
A combination of estimates \eqref{infsup_h1} and \eqref{infsup_h2} proves the
discrete inf-sup property \eqref{infsup_h}.

\subsection{Proof of Theorem~\ref{thm_gu}} \label{pf_thm_gu}

System \eqref{gu} is not of a (standard) mixed form but satisfies the standard properties
of an operator equation \cite{Necas_67_MDT,Babuska_71_EBF}.
As before, all (bi)linear forms are uniformly bounded.
Furthermore, the operator
\[
   \cB:\;\cU:=L_2(\Omega)\times L_2(\Omega;U)\times H_0(A,\cS)\times H(A^*,\cS)
   \to \cV^*:=\sH(A,\mesh)^*\times \sH(A^*,\mesh)^*
\]
defined by the system is injective and satisfies the inf-sup condition,
as we briefly recall now.

{\bf Injectivity.} Let $\dbv=(\du,\dw)\in\cV$ satisfy $\cB^*\dbv=0$.
Lemma~\ref{la_reg} shows that $\dw\in H(A^*)$ and $\du\in H_0(A)$.
Then $\cB^*\dbv=0$ implies $A^*\dw=0$, $\cCinv\dw=A\du$. We conclude that
$\du\in H_0(A)$ solves $A^*\cC A\du=0$, thus $\du=0$ by \eqref{PF}, and $\dw=0$.

{\bf Inf-sup property.}
In abstract form, we have to show that there is a constant $C>0$, independent of $\mesh$ and $\bu$,
that satisfies
\[
   \sup_{\bv\in\cV,\ \|\bv\|_V=1}
   \dual{\cB\bu}{\bv}_{\cV^*\times \cV}
   \ge C \|\bu\|_\cU\quad\forall\bu\in\cU
\]
with (squared) norms
$\|\bu\|_\cU^2:= \|u\|^2 + \|w\|^2 + \|\phi\|_{A,\cS}^2 + \|\psi\|_{A^*,\cS}^2$
and
$\|\bv\|_\cV^2:=\|\du\|_{A,\mesh}^2+\|\dw\|_{A^*,\mesh}^2$
for $\bu\in\cU$ and $\bv=(\du,\dw)\in\cV$.
As Lemma~\ref{la_infsup}, this can be seen by \cite[Theorem~3.3]{CarstensenDG_16_BSF}.
We set
\begin{align*}
   &Y:=\cV,\quad
   Y_0:=H_0(A)\times H(A^*),\quad
   X_0:= L_2(\Omega)\times L_2(\Omega;U),\quad
   \hat X:= H_0(A,\cS)\times H(A^*,\cS),\\
   &b_0((u,w),(\du,\dw)):=\vdual{A\du-\cCinv\dw}{w}_\mesh + \vdual{A^*\dw}{u}_\mesh,\\
   &\hat b((\phi,\psi),(\du,\dw)):= \dual{\phi}{\dw}_\cS + \dual{\psi}{\du}_\cS
   \quad\text{for}\ (u,w)\in X_0,\ (\phi,\psi)\in \hat X,\ (\du,\dw)\in Y
\end{align*}
and need to check conditions \eqref{A1} and \eqref{A2} in the current setting.
Identity \eqref{A2a} holds by Lemma~\ref{la_reg}
and Lemma~\ref{la_tr} implies \eqref{A2b} with $c=1$.
Inf-sup condition \eqref{A1} follows by the stability of the adjoint problem:
\emph{Given $(g,G)\in X_0$ find $(\du,\dw)\in Y$ such that}
\begin{alignat*}{3}
   &A^*\dw &&=g,\quad && A\du-\cCinv\dw = G\\
   \Leftrightarrow\quad
   &A^*\cC A\du &&= g+A^*\cC G,\quad &&\dw =\cC A\du - \cC G.
\end{alignat*}
Of course, this is the initial (self-adjoint) problem \eqref{prob} with general data.
By Assumption \eqref{PF} it is well posed with solution
$(\du,\dw)\in H_0(A)\times H(A^*)$ bounded as
\begin{align*}
   &\|\du\|_A \le C \bigl(\|g\|^2 + \|G\|^2\bigr)^{1/2} = C \|(g,G)\|_{X_0},\\
   &\|\dw\|_{A^*}^2 = \|\dw\|^2 + \|A^*\dw\|^2
   = \|\cC A\du - \cC G\|^2 + \|g\|^2
   \le C^2 \|(g,G)\|_{X_0}^2
\end{align*}
with a constant $C>0$ that depends on $\cC$ but is independent of $g$ and $G$.
The inf-sup property follows by \cite[Theorem~3.3]{CarstensenDG_16_BSF}.

We conclude that problem \eqref{gu} is well posed with solution $(u,w,\phi,\psi)\in\cU$
that satisfies the claimed stability estimate.
Distributional arguments verify the properties $u\in H(A)$, $w=\cC A u\in H(A^*)$,
and $A^*w=f$.
Then, applying \eqref{gua} and \eqref{gub}, we find, respectively, that
\[
   \dual{\trAS(u)}{\dw}_\cS =
   \vdual{Au}{\dw} - \vdual{u}{A^*\dw}_\mesh =
   \vdual{\cCinv w}{\dw} - \vdual{u}{A^*\dw}_\mesh = \dual{\phi}{\dw}_\cS
\]
for any $\dw\in \sH(A^*,\mesh)$ and
\[
   \dual{\trAsS(w)}{\du}_\cS =
   \vdual{A^*w}{\du} - \vdual{w}{A\du}_\mesh =
   \vdual{A^*w}{\du} - \vdual{f}{\du} + \dual{\psi}{\du}_\cS = \dual{\psi}{\du}_\cS
\]
for any $\du\in\sH(A,\mesh)$.
It follows that $\trAS(u)=\phi$, also implying $u\in H_0(A)$, and $\trAsS(w)=\psi$.
This finishes the proof.


\bibliographystyle{siam}
\bibliography{/home/norbert/tex/bib/bib,/home/norbert/tex/bib/heuer}

\end{document}